\documentclass[11pt, a4paper]{article}

\usepackage{pb-diagram, lamsarrow,pb-lams}
\usepackage{amssymb}
\usepackage{amsthm}
\usepackage{amsmath}
\usepackage[T1]{fontenc}
\usepackage{latexsym}
\usepackage{textcomp}
\usepackage[ansinew]{inputenc}
\usepackage{exscale}
\usepackage{graphicx}
\usepackage{enumerate}

\input{amssym.def}
\input{amssym}
\input xy
\xyoption{all} \CompileMatrices


    
    \newcommand{\cl}{\mathrm{cl}}
    
    \newcommand{\pd}{\mathrm{pd}}

    \newcommand{\aug}{\mathrm{aug\,}}

    \newcommand{\irr}{\mathrm{Irr\,}}
    \newcommand{\irrp}{\mathrm{Irr_p\,}}
    \newcommand{\tor}{_{\mathrm{tor}}}

    \newcommand{\PMod}{\mathrm{PMod}}
    
    \newcommand{\perf}{\mathrm{perf}}
    \newcommand{\Dperf} {\mathcal D ^{\perf}}
    \newcommand{\ith}{^{\mathrm{th}}}
    
    \newcommand{\Ann}{\mathrm{Ann}}
    \newcommand{\Fitt}{\mathrm{Fitt}}

    \newcommand{\Spec}{\mathrm{Spec}}
    \newcommand{\nr}{\mathrm{nr}}
    \newcommand{\Gl}{\mathrm{Gl}}

    \newcommand{\et}{\mathrm{\acute{e}t}}
    \newcommand{\Div}{\mathrm{Div}}
    \newcommand{\cone}{\mathrm{cone}}



    \newcommand{\punkt}{^{\cdot}}

    
    \newcommand{\qpc} {\Q_p^{\mr{c}}}
    
    \newcommand{\zpg} {\zp G}
    \newcommand{\qpg} {\qp G}
    \newcommand{\cp} {\C_p}

    \newcommand{\arat} {\ar@}
    \newcommand{\inj} {\arat{^{(}->}}
    \newcommand{\sur} {\arat{->>}}
    \newcommand{\equal} {\arat{=}}
    \def\barat[#1] {\arat{->}'[#1][#1#1]}
    \def\binj[#1] {\inj'[#1][#1#1]}
    \def\bsur[#1] {\sur'[#1][#1#1]}
    \def\bequal[#1] {\equal'[#1][#1#1]}

    \newcommand{\ram}{_{\mathrm{ram}}}

    \newcommand{\kot}[1]{K_{0}T({#1})}

    \newcommand{\beq}{\begin{equation}}
    \newcommand{\eeq}{\end{equation}}

    \newcommand{\mc}{\mathcal}

    \newcommand{\half}{{\textstyle \frac{1}{2}}}


     \newcommand{\fa}{\goth{a}}
     \newcommand{\fo}{\goth{o}}
     \newcommand{\fp}{\goth{p}}
     \newcommand{\fA}{\goth{A}}
     \newcommand{\fM}{\goth{M}}
     \newcommand{\fP}{\goth{P}}


     \newcommand{\C}{\mathbb{C}}
    \newcommand{\N}{\mathbb{N}}
     \newcommand{\Q}{\mathbb{Q}}
     \newcommand{\R}{\mathbb{R}}
    \newcommand{\Z}{\mathbb{Z}}


    \newcommand{\ol}[1]{\overline{#1}}

    \newcommand{\ti}[1]{\tilde{#1}}
   \newcommand{\sm}{\setminus}
    \newcommand{\op}{\oplus}
    
    \newcommand{\me}{^{-1}}
    \newcommand{\mal}{^{\times}}
    \newcommand{\mr}{\mathrm}
    \newcommand{\clo}{^{\mr{c}}}


    \newcommand{\zg}{{\mathbb{Z}G}}

    \newcommand{\rg}{{\mathbb{R}G}}

    \newcommand{\zp}{{\mathbb{Z}_p}}
    
    \newcommand{\qp}{{\mathbb{Q}_p}}


    \newcommand{\into}{\rightarrowtail}
    \newcommand{\onto}{\twoheadrightarrow}
    \newcommand{\lto}{\longrightarrow}


    \newcommand{\ka}{\kappa}
    \newcommand{\ga}{\gamma}
    \newcommand{\Ga}{\Gamma}
    \newcommand{\si}{\sigma}
    
    \newcommand{\de}{\delta}
    \newcommand{\De}{\Delta}

    \newcommand{\La}{\Lambda}

    \newcommand{\om}{\omega}
    \newcommand{\al}{\alpha}
    \newcommand{\ve}{\varepsilon}


     \newcommand{\barr}{\begin{array}}
    \newcommand{\earr}{\end{array}}
    \newcommand{\bit}{\begin{itemize}}
    \newcommand{\eit}{\end{itemize}}
    \newcommand{\ben}{\begin{enumerate}}
    \newcommand{\een}{\end{enumerate}}
   \newcommand{\bea}{\begin{eqnarray*}}
    \newcommand{\eea}{\end{eqnarray*}}


    \newcommand{\Hom}{\mathrm{Hom}}

   \newcommand{\Gal}{\mathrm{Gal}}
    
    \newcommand{\ord}{\mathrm{ord\,}}
    
   \newcommand{\ind}{\mathrm{ind\,}}

   \newcommand{\res}{\mathrm{res\,}}
   \newcommand{\Det}{\mathrm{Det\,}}

\newtheorem{theo}{Theorem}[section]
\newtheorem{prop}[theo]{Proposition}
\newtheorem{lem}[theo]{Lemma}
\newtheorem{cor}[theo]{Corollary}
\newtheorem{defi}[theo]{Definition}
\newtheorem{con}[theo]{Conjecture}
\newtheorem{rem}[theo]{Remark}

\pagestyle{myheadings} \markboth{Andreas Nickel}{Iwasawa theory and Stark-type conjectures}
\setlength{\topmargin}{0cm}
\setlength{\textheight}{22cm}
\setlength{\textwidth}{15cm}
\hoffset = -1cm

\newcommand{\myfootnote}[1]{%
\renewcommand{\thefootnote}{}%
\footnotetext{#1}%
\renewcommand{\thefootnote}{\arabic{footnote}}%
}

\begin{document}
\xymatrixrowsep{3pc} \xymatrixcolsep{3pc}

\title{Equivariant Iwasawa theory and non-abelian Stark-type conjectures}
\author{Andreas Nickel\thanks{I acknowledge financial support provided by the DFG}}
\date{}

\maketitle

\begin{abstract}
    We discuss three different formulations of the equivariant Iwasawa main conjecture attached to
    an extension $\mc K/k$ of totally real fields with Galois group $\mc G$,
    where $k$ is a number field and  $\mc G$ is a $p$-adic Lie group of dimension $1$ for an odd prime $p$.
    All these formulations are equivalent and hold if Iwasawa's $\mu$-invariant vanishes. Under mild hypotheses, we use this to
    prove non-abelian generalizations of Brumer's conjecture, the Brumer-Stark conjecture and a strong
    version of the Coates-Sinnott conjecture provided that $\mu = 0$.
\end{abstract}

\section*{Introduction}
\myfootnote{{\it 2010 Mathematics Subject Classification:} 11R23, 11R42}
\myfootnote{{\it Keywords:} Iwasawa theory, main conjecture, equivariant $L$-values, Stark conjectures}
Let $K/k$ be a finite Galois CM-extension of number fields with Galois group $G$. To each finite set $S$ of places of
$k$ which contains all the infinite places, one can associate a so-called ``Stickelberger element''
$\theta_S(K/k)$ in the center of the group ring algebra $\C G$. This Stickelberger element is defined via
$L$-values at zero of $S$-truncated Artin $L$-functions attached to the (complex) characters of $G$.
Let us denote the roots of unity of $K$ by $\mu_K$ and the class group of $K$ by $\cl_K$.
Assume that $S$ contains the set $S_{\ram}$ of all finite primes of $k$ which ramify in $K/k$. Then it was independently
shown in \cite{Cassou}, \cite{Deligne-Ribet} and \cite{Barsky} that for abelian $G$ one has
\beq \label{Ca-DR-Ba}
    \Ann_{\zg} (\mu_K) \theta_S(K/k) \subset \zg.
\eeq
Now Brumer's conjecture asserts that $\Ann_{\zg} (\mu_K) \theta_S(K/k)$ annihilates $\cl_K$.
There is a large body of evidence in support of Brumer's conjecture (cf.~the expository article \cite{Brumer-expository});
in particular, Greither \cite{Gr-Fitt-ETNC} has shown that the appropriate special case of the
equivariant Tamagawa number conjecture (ETNC) implies the $p$-part of Brumer's conjecture for an odd prime $p$
if the $p$-part of $\mu_K$ is a c.t.~(short for cohomologically trivial) $G$-module. A similar result for arbitrary
$G$ was proven by the author \cite{ich-Fitting}, improving an unconditional annihilation result
due to Burns and Johnston \cite{Burns_Johnston}. Note that the assumptions made in loc.cit.~are
adapted to ensure the validity of the strong Stark conjecture.
Moreover, in \cite{ich-stark}, the author has introduced non-abelian generalizations of Brumer's conjecture, the Brumer-Stark conjecture
and of the so-called strong Brumer-Stark property.
The extension $K/k$ fulfills the latter if certain Stickelberger elements are contained in
the (non-commutative) Fitting invariants of corresponding ray class groups; but it
does not hold in general, even if $G$ is abelian,
as follows from the results in \cite{Gr-Kurihara}. But if this property happens to be true, this also implies the
validity of the (non-abelian) Brumer-Stark conjecture and Brumer's conjecture. We will prove the $p$-part of a dual version
of the strong Brumer-Stark property for an arbitrary CM-extension of number fields and an odd prime $p$ under the only restriction
that $S$ contains all the $p$-adic places of $k$ and that Iwasawa's $\mu$-invariant vanishes. In particular,
this implies the (non-abelian) Brumer-Stark conjecture and Brumer's conjecture under the same hypotheses.
Note that the vanishing of $\mu$ is a long standing
conjecture of Iwasawa theory; the most general result is still due to Ferrero
and Washington \cite{Ferrero_Wash} and says that $\mu=0$ for absolutely abelian extensions.\\

We have to discuss three different versions of the equivariant Iwasawa main conjecture (EIMC). The first formulation
is due to Ritter and Weiss \cite{towardII}, the second follows the framework of \cite{CFKSV} and was used by Kakde \cite{Kakde-mc}
in his proof of the EIMC. Finally, Greither and Popescu \cite{EIMC-reform} have formulated an EIMC
via the Tate module of a certain Iwasawa-theoretic abstract $1$-motive; but they restrict their formulation to abelian
extensions. So one of our first tasks is to give a formulation of their conjecture in the non-abelian situation as well.
In fact, it will be this formulation which will lead to the above mentioned proof of the (dual) strong Brumer-Stark property.
All variants of the EIMC hold if Iwasawa's $\mu$-invariant vanishes. This follows from the recent result
of Ritter and Weiss \cite{EIMC-theorem} on the EIMC for $p$-adic Lie groups of dimension $1$. In fact,
this can be generalized to Lie groups of higher dimension as shown by Kakde \cite{Kakde-mc} and, independently, by Burns \cite{Burns-mc}.
Note that Kakde in fact provides an independent proof also in the case of dimension $1$.\\

Finally, we will introduce a (non-abelian) analogue of the strong Brumer-Stark property for higher \'{e}tale cohomology groups.
For abelian extensions, this property implies the Coates-Sinnott conjecture, and for arbitrary extensions it implies
a non-abelian analogue of this conjecture which is closely related to the central conjecture in \cite{ich-negative}.
In contrast to the strong Brumer-Stark property, we conjecture that its higher analogue holds in general and
we consequently will call this conjecture the (non-abelian) strong Coates-Sinnott conjecture.
We provide several reduction steps which under certain mild hypotheses allows us to assume
that $K/k$ is a Galois CM-extension. In this situation, we show that the strong Coates-Sinnott conjecture
is (nearly) equivalent to an appropriate special case of the ETNC. We may conclude that the strong
Coates-Sinnott conjecture holds provided that $\mu=0$, since these special cases of the ETNC have been proven
by Burns \cite{Burns-mc} under this assumption. We also give a direct proof of our conjecture, still assuming
that $\mu=0$, using our new formulation of the EIMC. This will also provide a new proof
of Burns' result on the ETNC.\\

This article is organized as follows. In section $1$, we provide the necessary background material. In particular,
we discuss the notion of non-commutative Fitting invariants which have been introduced by the author \cite{ich-Fitting},
and how we may define them for certain perfect complexes. In section $2$, we give the formulation of the EIMC
due to Ritter and Weiss, but using Fitting invariants rather than the $\Hom$ description.
We show that the canonical complex which occurs in the construction of Ritter and Weiss is isomorphic in the derived
category of Iwasawa modules to $R\Hom(R\Ga_{\et}(\Spec(\fo_{\mc K}[\frac{1}{S}]), \qp / \zp), \qp / \zp)$.
This will explain the relation of the first two above mentioned formulations of the EIMC in more
detail than it is available in the literature so far. In section $3$, we recall the notion of
abstract $1$-motives as formulated in \cite{EIMC-reform} and show, how to use the Iwasawa-theoretic
abstract $1$-motive of \cite{EIMC-reform} to formulate an EIMC in the non-abelian situation as well.
Assuming the vanishing of $\mu$, we deduce this conjecture from the result on the EIMC due to
Ritter and Weiss \cite{EIMC-theorem}. In fact, the argument can be reversed such that both conjectures are equivalent.
In section $4$, we use our new formulation of the EIMC to prove the above mentioned cases
of the (dual) strong Brumer-Stark property. Finally, we introduce and discuss the strong Coates-Sinnott conjecture
in section $5$.\\

The author would like to thank Cornelius Greither for several discussions concerning the article \cite{EIMC-reform}.

\section{Preliminaries}

\subsubsection{$K$-theory}
Let $\La$ be a left noetherian ring with $1$ and $\PMod(\La)$ the
category of all finitely generated projective $\La$-modules. We
write $K_0(\La)$ for the Grothendieck group of $\PMod(\La)$, and
$K_1(\La)$ for the Whitehead group of $\La$ which is the abelianized
infinite general linear group. If $S$ is a multiplicatively closed
subset of the center of $\La$ which contains no zero divisors, $1
\in S$, $0 \not \in S$, we denote the Grothendieck group of the
category of all finitely generated $S$-torsion $\La$-modules of
finite projective dimension by $K_0S(\La)$.
Writing $\La_S$ for the
ring of quotients of $\La$ with denominators in $S$,
we have the following
Localization Sequence (cf.~\cite{CR-II}, p.~65)
\beq \label{localization_sequence}
    K_1(\La) \to K_1(\La_S) \stackrel{\partial}{\lto} K_0S(\La) \stackrel{\rho}{\lto} K_0(\La) \to K_0(\La_S).
\eeq
In the special case where $\La$ is an $\fo$-order over a commutative ring $\fo$ and $S$ is the set of all
nonzerodivisors of $\fo$, we also write $\kot\La$ instead of $K_0S(\La)$.
Moreover, we denote the relative $K$-group corresponding to
a ring homomorphism $\La \to \La'$ by $K_0(\La,\La')$ (cf.~\cite{Swan}).
Then we have a
Localization Sequence (cf.~\cite{CR-II}, p.~72)
\[
    K_1(\La) \to K_1(\La') \stackrel{\partial_{\La,\La'}}{\lto} K_0(\La,\La') \to K_0(\La) \to K_0(\La').
\]
It is also shown in \cite{Swan} that there is an isomorphism $K_0(\La,\La_S) \simeq K_0S(\La)$.
For any ring $\La$ we write $\zeta(\La)$ for the subring of all elements which are central in $\La$.
Let $L$ be a subfield of either $\C$ or $\cp$ for some prime $p$ and let $G$ be a finite group.
In the case where $\La'$ is the group ring $LG$ the reduced norm map $\nr_{LG}: K_1(LG) \to \zeta(LG)\mal$
is always injective. If in addition $L=\R$,
there exists a canonical map $\hat \partial_G: \zeta(\rg)\mal \to K_0(\zg, \rg)$ such that the restriction
of $\hat \partial_G$ to the image of the reduced norm equals $\partial_{\zg, \rg} \circ \nr_{\rg}\me$.
This map is called the extended boundary homomorphism and was introduced by Burns and Flach \cite{Burns_Flach}.\\

For any ring $\La$ we write $\mathcal D (\La)$ for the derived
category of $\La$-modules. Let $\mathcal C^b (\PMod (\La))$ be the
category of bounded complexes of finitely generated projective
$\La$-modules. A complex of $\La$-modules is called perfect if it is
isomorphic in $\mathcal D (\La)$ to an element of $\mathcal C^b
(\PMod (\La))$. We denote the full triangulated subcategory of
$\mathcal D (\La)$ consisting of perfect complexes by $\mathcal D
^{\perf} (\La)$. For any $C\punkt \in \mathcal C^b (\PMod (\La))$ we
define $\La$-modules
$$C^{ev} := \bigoplus_{i \in \Z} C^{2i},~ C^{odd} := \bigoplus_{i \in \Z} C^{2i+1}.$$
Similarly, we define $H^{ev}(C\punkt)$ and $H^{odd}(C\punkt)$ to be the direct sum over all even (resp.~odd) degree
cohomology groups of $C\punkt$.\\

For the following let $R$ be a Dedekind domain of characteristic
$0$, $K$ its field of fractions, $A$ a finite dimensional
$K$-algebra and $\La$ an $R$-order in $A$. A pair $(C\punkt,t)$
consisting of a complex $C\punkt \in \mathcal \Dperf (\La)$ and an
isomorphism $t: H^{odd}(C_K \punkt) \to H^{ev}(C_K\punkt)$ is called a
trivialized complex, where $C_K\punkt$ is the complex obtained by
tensoring $C\punkt$
with $K$. We refer to $t$ as a trivialization of $C\punkt$.
One defines the refined Euler characteristic $\chi_{\La,A}
(C\punkt, t) \in K_0(\La,A)$ of a trivialized complex as follows:
Choose a complex $P\punkt \in \mathcal C^b(\PMod(R))$ which is
quasi-isomorphic to $C\punkt$. Let $B^i(P_K \punkt)$ and $Z^i(P_K
\punkt)$ denote the $i\ith$ cobounderies and $i \ith$ cocycles of
$P_K \punkt$, respectively. We have the obvious exact sequences
$$ {B^i(P_K\punkt)} \into {Z^i(P_K\punkt)} \onto {H^i(P_K\punkt)}\mbox{~,~~ }
   {Z^i(P_K\punkt)} \into {P_K^i} \onto {B^{i+1}(P_K\punkt).}  $$
If we choose splittings of the above sequences, we get an
isomorphism
$$   \phi_t: P_K^{odd}  \simeq  \bigoplus_{i \in \Z} B^i(P_K\punkt) \op H^{odd}(P_K\punkt)
      \simeq  \bigoplus_{i \in \Z} B^i(P_K\punkt)  \op H^{ev}(P_K\punkt)
      \simeq  P_K^{ev},$$
where the second map is induced by $t$. Then the refined
Euler characteristic is defined to be
$$\chi_{\La, A} (C\punkt, t) := (P^{odd}, \phi_t, P^{ev}) \in K_0(\La, A)$$
which indeed is independent of all choices made in the
construction.
For further information concerning refined Euler characteristics
we refer the reader to \cite{Burns_Whitehead}.\\

Denote the full triangulated subcategory of
$\mathcal D (\La)$ consisting of perfect complexes whose cohomologies are $R$-torsion by
$\mathcal D^{\perf}\tor (\La)$. For any complex $C\punkt \in \mathcal D^{\perf}\tor (\La)$
there is a unique trivialization, namely $t = 0$; hence $C\punkt$ defines a class
$$[C\punkt] := \chi_{\La,A}(C\punkt, 0) \in K_0(\La,A) = \kot{\La}.$$
In fact, $K_0(\La, A)$ identifies with the Grothendieck group whose generators are $[C\punkt]$, where $C\punkt$
is an object of the category $\mathcal C^b\tor(\PMod(\La))$ of bounded complexes of finitely generated projective $\La$-modules whose cohomologies are
$R$-torsion, and the relations are as follows: $[C\punkt] = 0$ if $C\punkt$ is acyclic, and
$[C_2\punkt] = [C_1\punkt] + [C_3\punkt]$ for any short exact sequence
$$C_1\punkt \into C_2\punkt \onto C_3\punkt$$
in $\mathcal C^b\tor(\PMod(\La))$ (cf.~\cite{Weibel}).
Moreover, if $M$ is a finitely generated $R$-torsion $\La$-module of finite projective dimension, then the class
of $M$ in $\kot{\La}$ agrees with the class $[M] \in K_0(\La,A)$, where $M$ is considered as a perfect complex
concentrated in degree $1$.

\subsubsection{Non-commutative Fitting invariants}
For the following we refer the reader to \cite{ich-Fitting}.
We denote the set of all $m \times n$
matrices with entries in a ring $R$ by $M_{m \times n} (R)$ and in the case $m=n$
the group of all invertible elements of $M_{n \times n} (R)$ by $\Gl_n(R)$.
Let $A$ be a separable $K$-algebra and $\La$ be an $\fo$-order in $A$, finitely generated as $\fo$-module,
where $\fo$ is a complete commutative noetherian local ring with field of quotients $K$.
Moreover, we will assume that the integral closure of $\fo$ in $K$ is finitely generated as $\fo$-module.
The group ring $\zpg$ of a finite group $G$ will serve as a standard example.
Let $N$ and $M$ be two $\zeta(\La)$-submodules of
    an $\fo$-torsionfree $\zeta(\La)$-module.
    Then $N$ and $M$ are called {\it $\nr(\La)$-equivalent} if
    there exists an integer $n$ and a matrix $U \in \Gl_n(\La)$
    such that $N = \nr(U) \cdot M$, where $\nr: A \to \zeta(A)$ denotes
    the reduced norm map which extends to matrix rings over $A$ in the obvious way.
    We denote the corresponding equivalence class by $[N]_{\nr(\La)}$.
    We say that $N$ is
    $\nr(\La)$-contained in $M$ (and write $[N]_{\nr(\La)} \subset [M]_{\nr(\La)}$)
    if for all $N' \in [N]_{\nr(\La)}$ there exists $M' \in [M]_{\nr(\La)}$
    such that $N' \subset M'$. Note that it suffices to check this property for one $N_0 \in [N]_{\nr(\La)}$.
    We will say that $x$ is contained in $[N]_{\nr(\La)}$ (and write $x \in [N]_{\nr(\La)}$) if there is $N_0 \in [N]_{\nr(\La)}$ such that $x \in N_0$.\\

    Now let $M$ be a finitely presented (left) $\La$-module and let
    \beq \label{finite_representation}
        \La^a \stackrel{h}{\lto} \La^b \onto M
    \eeq
    be a finite presentation of $M$.
    We identify the homomorphism $h$ with the corresponding matrix in $M_{a \times b}(\La)$ and define
    $S(h) = S_b(h)$ to be the set of all $b \times b$ submatrices of $h$ if $a \geq b$. In the case $a=b$
    we call (\ref{finite_representation}) a quadratic presentation.
    The Fitting invariant of $h$ over $\La$ is defined to be
    $$\Fitt_{\La}(h) = \left\{ \barr{lll} [0]_{\nr(\La)} & \mbox{ if } & a<b \\
                        \left[\langle \nr(H) | H \in S(h)\rangle_{\zeta(\La)}\right]_{\nr(\La)} & \mbox{ if } & a \geq b. \earr \right.$$
    We call $\Fitt_{\La}(h)$ a Fitting invariant of $M$ over $\La$. One defines $\Fitt_{\La}^{\max}(M)$ to be the unique
    Fitting invariant of $M$ over $\La$ which is maximal among all Fitting invariants of $M$ with respect to the partial
    order ``$\subset$''. If $M$ admits a quadratic presentation $h$, one also puts $\Fitt_{\La}(M) := \Fitt_{\La}(h)$
    which is independent of the chosen quadratic presentation. \\

    Assume now that $\fo$ is an integrally closed commutative noetherian ring, but not necessarily complete or local.
    We denote by $\mathcal I = \mc I(\La)$ the $\zeta(\La)$-submodule of $\zeta(A)$ generated by the
    elements $\nr(H)$, $H \in M_{b\times b}(\La)$, $b \in \N$.
    We choose a maximal order $\La'$ containing $\La$. We may decompose the separable $K$-algebra $A$ into its simple components
    $$A = A_1 \op \cdots \op A_t,$$
    i.e.~each $A_i$ is a simple $K$-algebra and $A_i = A e_i = e_i A$ with central primitive idempotents $e_i$, $1 \leq i \leq t$.
    For any matrix $H \in M_{b \times b} (\La)$ there is a unique matrix
    $H^{\ast} \in M_{b\times b}(\La')$ such that   $H^{\ast} H = H H^{\ast} = \nr (H) \cdot 1_{b \times b}$
    and $H^{\ast} e_i = 0$ whenever $\nr(H) e_i =0$ (cf.~\cite{ich-Fitting}, Lemma 4.1;
    the additional assumption on $\fo$ to be complete local is not necessary). If
    $\ti H \in M_{b \times b} (\La)$ is a second matrix, then $(H \ti H)^{\ast} = \ti H^{\ast} H^{\ast}$.
    We define
    $$\mathcal H = \mathcal H(\La) := \left\{
        x \in \zeta(\La) |  x H^{\ast} \in M_{b \times b}(\La)  \forall b \in \N ~\forall H \in M_{b \times b} (\La)
         \right\}.$$
    Since $x \cdot \nr(H) = x H^{\ast} H$, we have in particular
    \beq \label{HI_in_zeta}
        \mathcal H \cdot \mathcal I = \mc H \subset \zeta(\La).
    \eeq
    We put $\mc H_p(G) := \mc H(\zpg)$ and $\mc H(G) := \mc H(\Z G)$.
    The importance of the $\zeta(\La)$-module $\mathcal H$ will become clear by means of the following result
    which is \cite{ich-Fitting}, Th.~4.2.
    \begin{theo} \label{annihilation-theo}
        If $\fo$ is an integrally closed complete commutative noetherian local ring
        and  $M$ is a finitely presented $\La$-module, then
        $$\mathcal H \cdot \Fitt_{\La}^{\max}(M) \subset \Ann_{\La}(M).$$
    \end{theo}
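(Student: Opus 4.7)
The plan is to verify the annihilation directly on a set of $\zeta(\La)$-module generators of a specific representative of $\Fitt_\La^{\max}(M)$. Choose a finite presentation $\La^a \stackrel{h}{\lto} \La^b \onto M$ whose Fitting invariant represents $\Fitt_\La^{\max}(M)$. If $a < b$ then $\Fitt_\La(h) = [0]_{\nr(\La)}$ and there is nothing to prove, so assume $a \geq b$. Then
\[
    N := \langle \nr(H) \mid H \in S_b(h) \rangle_{\zeta(\La)}
\]
is a representative of $\Fitt_\La^{\max}(M)$, and it suffices to show that $x \nr(H) \in \Ann_\La(M)$ for every $x \in \mc H$ and every $b \times b$ submatrix $H$ of $h$.

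Fix such $H$ and $x$. The construction recalled just above the theorem produces a matrix $H^{\ast} \in M_{b \times b}(\La')$ satisfying $H H^{\ast} = H^{\ast} H = \nr(H) \cdot 1_{b \times b}$, and the very definition of $\mc H$ gives $x H^{\ast} \in M_{b \times b}(\La)$. Consequently
\[
    H \cdot (x H^{\ast}) = (x H^{\ast}) \cdot H = x \nr(H) \cdot 1_{b \times b}
\]
is an identity inside $M_{b \times b}(\La)$. Reading off a diagonal entry shows $x \nr(H) \in \La \cap \zeta(A) = \zeta(\La)$. Viewing $H$ as the $\La$-linear endomorphism of $\La^b$ it defines, the identity says precisely that $x \nr(H) \cdot \La^b \subseteq \im(H)$. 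Since $H$ is a $b \times b$ submatrix of $h$, one has $\im(H) \subseteq \im(h)$, so the natural surjection $\coker(H) \onto \coker(h) = M$ forces $x \nr(H) \cdot M = 0$, as desired.

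The main conceptual point --- and it is already built into the paper's definition of $\mc H$ --- is that although the pseudo-inverse $H^{\ast}$ may only lie in the maximal order $\La'$, the module $\mc H$ is engineered precisely so as to render $x H^{\ast}$ integral for all $H$ simultaneously. Once this is achieved, the classical ``adjugate times matrix equals determinant'' identity transplants verbatim from the commutative Fitting-ideal setting, and the annihilation drops out at once. There is no substantive obstacle; the only minor bookkeeping issue is to fix a matrix-multiplication convention consistent with the map direction $\La^a \to \La^b$, after which both $\im(H)\subseteq \im(h)$ and $x\nr(H)\La^b \subseteq \im(H)$ are immediate from the two identities above.
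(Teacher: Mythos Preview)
The paper does not actually prove this theorem; it is quoted verbatim from \cite{ich-Fitting}, Th.~4.2, so there is no in-paper argument to compare against. That said, your proof is correct and is exactly the intended one: the module $\mc H$ is \emph{defined} so that the classical adjugate identity $H^{\ast}H = \nr(H)\cdot 1$ can be pulled back from $\La'$ to $\La$, after which the commutative Fitting-ideal argument goes through unchanged.

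One small point you pass over in silence: $\Fitt_{\La}^{\max}(M)$ is an $\nr(\La)$-equivalence class, not a single $\zeta(\La)$-module, so strictly speaking you must check that $\mc H\cdot N' \subset \Ann_{\La}(M)$ for \emph{every} representative $N'$, not just for your chosen $N$. This is immediate from the identity $\mc H\cdot\mc I = \mc H$ displayed just before the theorem: any other representative is $\nr(U)\cdot N$ for some $U \in \Gl_n(\La)$, and since $\nr(U)\in\mc I$ one gets $\mc H\cdot\nr(U)\cdot N \subset \mc H\cdot N \subset \Ann_{\La}(M)$.
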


    Now let $C\punkt \in \mc D^{\perf}\tor(\La)$.
    If $\rho([C\punkt]) = 0$, we choose  $x \in K_1(A)$ such that $\partial(x) = [C\punkt]$ and define
    $$\Fitt_{\La}(C\punkt) := \left[\langle \nr_A (x) \rangle_{\zeta(\La)}\right]_{\nr(\La)}.$$
    It is straightforward to show that
    $$\Fitt_{\La}(C_2\punkt) = \Fitt_{\La}(C_1\punkt) \cdot \Fitt_{\La}(C_3\punkt)$$
    for any short exact sequence
    $C_1\punkt \into C_2\punkt \onto C_3\punkt$
    in $\mathcal C^b\tor(\PMod(\La))$, provided that all Fitting invariants are defined.
    Finally, if $C\punkt$ is isomorphic in $\mc D(\La)$ to a complex $P\me \to P^0$ concentrated in degree $-1$ and $0$ such that
    $P^i$ are finitely generated $\fo$-torsion $\La$-modules of finite projective dimension, $i = -1,0$, then
    $$\Fitt_{\La}(C\punkt) = \Fitt_{\La}(P^0 : P\me),$$
    where the righthand side denotes the relative Fitting invariant of \cite{ich-Fitting}, Def.~3.6.\\

    Now let $p \not=2$ be a prime and let $\La(\mc G)$ be the complete group algebra $\zp[[\mc G]]$,
    where $\mc G$ is a profinite group which contains a finite normal subgroup $H$ such that
    $\mc G/H \simeq \Ga$ for a pro-p-group $\Ga$, isomorphic to $\zp$; thus $\mc G$ can be written
    as a semi-direct product $H \rtimes \Ga$. We fix a topological
    generator $\ga$ of $\Ga$ and choose a natural number $n$ such that $\ga^{p^n}$ is central in $\mc G$.
    Since also $\Ga^{p^n} \simeq \zp$, there is an isomorphism $\zp [[\Ga^{p^n}]] \simeq \zp [[T]]$
    induced by $\ga^{p^n} \mapsto 1+T$. Here, $R := \zp [[T]]$ denotes the power series ring in one variable over $\zp$.
    If we view $\La(\mc G)$ as an $R$-module, there is a decomposition
    $$\La(\mc G) = \bigoplus_{i=0}^{p^n-1} R \ga^i [H].$$
    Hence $\La(\mc G)$ is finitely generated as an $R$-module and an $R$-order in the separable $Quot(R)$-algebra
    $\mathcal Q (\mc G) := \bigoplus_i Quot(R) \ga^i[H]$. Note that $\mc Q(\mc G)$ is obtained from $\La(\mc G)$ by inverting all non-zero
    elements in $R$.
    For any ring $\La$ and any $\La$-module $M$, we write $\pd_{\La}(M)$ for the projective dimension of $M$ over $\La$.
    For any finitely generated $\La(\mc G)$-module $M$, we write $\mu(M)$ for the
    Iwasawa $\mu$-invariant of $M$ considered as a $\La(\Ga)$-module.

    \begin{prop} \label{Fitt_becomes_int}
        Let $C\punkt$  be a complex in $\mc D^{\perf}\tor(\La(\mc G))$. Assume that $C\punkt$ is isomorphic in $\mc D(\La)$
        to a bounded complex $P\punkt$ such that $\pd_{\La(\mc G)}(P^j) \leq 1$, $\mu(P^j)=0$ and $P^j$ is $R$-torsion for all $j\in \N$.
        Assume that the Fitting invariant $\Fitt_{\qp \La(\mc G)}(\qp \otimes^{L} C\punkt)$ of $\qp \otimes^{L} C\punkt$ over $\qp \La(\mc G)$
        is generated by an element $\Phi \in \nr(K_1(\La_{(p)}(\mc G)))$, where the subscript $(p)$ means localization at the prime $(p)$.
        Then also
        $$\Fitt_{\La(\mc G)}(C\punkt) = [ \langle \Phi \rangle_{\zeta(\La(\mc G))} ]_{\nr(\La (\mc G))}.$$
    \end{prop}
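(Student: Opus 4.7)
The strategy is to realize $[C\punkt]\in\kot{\La(\mc G)}$ as the boundary of an element of $K_1(\La_{(p)}(\mc G))$ (not just of $K_1(\mc Q(\mc G))$), and then to compare the resulting generator of $\Fitt_{\La(\mc G)}(C\punkt)$ with $\Phi$ directly in $\zeta(\mc Q(\mc G))$.

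First I would show that the base change of $[C\punkt]$ to $\kot{\La_{(p)}(\mc G)}$ vanishes. Each $P^j$ is $R$-torsion with $\mu(P^j)=0$, hence finitely generated as a $\zp$-module; the $\zp$-characteristic polynomial of $T$ acting on $P^j$ is a monic element of $R=\zp[[T]]$, so it lies in $R\sm(p)$, becomes a unit in $R_{(p)}$ and annihilates $R_{(p)}\ot_R P^j$. By flatness of localization $\La_{(p)}(\mc G)\ot^L C\punkt\simeq 0$, so the base-change map $\kot{\La(\mc G)}\to\kot{\La_{(p)}(\mc G)}$ kills $[C\punkt]$. Since each $P^j$ has projective dimension $\leq 1$ we have $\rho([C\punkt])=0$, so there exists $z\in K_1(\mc Q(\mc G))$ with $\partial(z)=[C\punkt]$. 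Naturality of the two localization sequences, combined with the vanishing above, forces $\partial_{\La_{(p)}(\mc G)}(z)=0$ in $\kot{\La_{(p)}(\mc G)}$, so by exactness $z$ may be taken to come from $K_1(\La_{(p)}(\mc G))$.

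By the definition of the Fitting invariant of a complex recalled in the preceding subsection, this $z$ yields
\[
\Fitt_{\La(\mc G)}(C\punkt) = [\langle \nr(z)\rangle]_{\nr(\La(\mc G))}, \qquad \Fitt_{\qp\La(\mc G)}(\qp\ot^L C\punkt) = [\langle \nr(z)\rangle]_{\nr(\qp\La(\mc G))},
\]
the second identity coming from the fact that base change to $\qp\La(\mc G)$ commutes with $\partial$. The hypothesis equates the right-hand side with $[\langle\Phi\rangle]_{\nr(\qp\La(\mc G))}$; unpacking the $\nr(\qp\La(\mc G))$-equivalence of principal $\zeta(\qp\La(\mc G))$-submodules of $\zeta(\mc Q(\mc G))$ then gives $\nr(z)=\beta\cdot\Phi$ for some $\beta\in\zeta(\qp\La(\mc G))\mal$. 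On the other hand $\nr(z),\Phi\in\nr(K_1(\La_{(p)}(\mc G)))\subset\zeta(\La_{(p)}(\mc G))\mal$, so also $\beta\in\zeta(\La_{(p)}(\mc G))\mal$.

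To finish it suffices to prove $\beta\in\zeta(\La(\mc G))\mal$, for then $\langle\nr(z)\rangle_{\zeta(\La(\mc G))}=\langle\Phi\rangle_{\zeta(\La(\mc G))}$ in $\zeta(\mc Q(\mc G))$ and the asserted equality of Fitting invariants is immediate. An elementary denominator analysis in $\mc Q(\mc G)$ — reconciling the $p$-power denominators coming from $\qp\La(\mc G)$ with the Weierstrass-prepared denominators coprime to $p$ coming from $\La_{(p)}(\mc G)$ — establishes $\La(\mc G)=\qp\La(\mc G)\cap\La_{(p)}(\mc G)$ inside $\mc Q(\mc G)$, and hence $\zeta(\La(\mc G))=\zeta(\qp\La(\mc G))\cap\zeta(\La_{(p)}(\mc G))$. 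Applying this to $\beta$ and $\beta\me$ yields $\beta\in\zeta(\La(\mc G))\mal$, completing the argument. The main obstacle I foresee is the vanishing $\La_{(p)}(\mc G)\ot^L C\punkt\simeq 0$ in the first step, which is the only point where the hypothesis $\mu(P^j)=0$ is essentially used; once that reduction is in place, the rest is standard $K$-theoretic bookkeeping together with the elementary intersection identity.
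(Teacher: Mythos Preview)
Your approach is genuinely different from the paper's. The paper does not try to descend $\beta=\nr(z)\Phi^{-1}$ to $\zeta(\La(\mc G))^\times$ via the intersection $\zeta(\La(\mc G))=\zeta(\qp\La(\mc G))\cap\zeta(\La_{(p)}(\mc G))$. Instead it writes $[C\punkt]=[P^{odd}]-[P^{ev}]$ in $\kot{\La(\mc G)}$, picks a generator $\Psi$ of $\Fitt_{\La(\mc G)}(P^{ev})$ which (since $P^{ev}$ dies at $(p)$) may be chosen in $\nr(K_1(\La_{(p)}(\mc G)))$, observes that $\Phi\Psi\in\nr(K_1(\La_{(p)}(\mc G)))$ generates $\Fitt_{\qp\La(\mc G)}(\qp\otimes P^{odd})$, and then invokes \cite{ich-tameII}, Prop.~3.2 to conclude that $\Phi\Psi$ already generates $\Fitt_{\La(\mc G)}(P^{odd})$. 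The hard passage from $\qp\La(\mc G)$ back to $\La(\mc G)$ is thus done for a single \emph{module} via that cited result, not by an intersection argument.

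Your argument has a real gap precisely at the point you treat as routine. You assert $\nr(K_1(\La_{(p)}(\mc G)))\subset\zeta(\La_{(p)}(\mc G))^\times$ (and, implicitly, the analogous inclusion over $\qp\La(\mc G)$ when you extract $\beta\in\zeta(\qp\La(\mc G))^\times$ from the $\nr(\qp\La(\mc G))$-equivalence). For an $\fo$-order $\La$ in a separable algebra, the reduced norm of a unit of $\La$ is only guaranteed to lie in the centre $\zeta(\La')$ of a maximal order $\La'\supset\La$, not in $\zeta(\La)$ itself; and $\zeta(\La)$ is in general \emph{not} integrally closed (already for $\La=\zp[S_3]$ with $p=3$). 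So from $z\in K_1(\La_{(p)}(\mc G))$ and $\Phi\in\nr(K_1(\La_{(p)}(\mc G)))$ you only get $\beta\in\zeta(\La'_{(p)})^\times$, and likewise the unpacking of the $\nr(\qp\La(\mc G))$-equivalence gives $\beta=v\cdot\nr(V)$ with $v\in\zeta(\qp\La(\mc G))^\times$ but $\nr(V)$ a priori only in $\zeta(\La'[1/p])$. Your intersection identity $\La(\mc G)=\qp\La(\mc G)\cap\La_{(p)}(\mc G)$ is correct, but it does not help unless you first know $\beta$ lies in the two centres, and that is exactly what is missing. Closing this gap amounts to proving, in some form, the content of \cite{ich-tameII}, Prop.~3.2; the paper simply cites it rather than reproving it here.
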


    \begin{proof}
    We first observe that the homomorphism $\partial: K_1(\La(\mc G)) \to \kot{\La(\mc G)}$ is surjective (cf.~\cite{ich-Fitting}, Lemma 6.2
    or more directly \cite{Kakde-mc}, Lemma 5). Hence $\Fitt_{\La(\mc G)}(C\punkt)$ is defined for any complex in $\mc D^{\perf}\tor(\La(\mc G))$.
    Our assumptions on $C\punkt$  imply that we have an equality
    $$[C\punkt] = [P^{odd}] - [P^{ev}] \in \kot{\La(\mc G)}.$$
    Then $P^{odd}$ and $P^{ev}$ are two finitely generated $R$-torsion $\La(\mc G)$-modules of
    projective dimension less or equal to $1$ and trivial $\mu$-invariant.
    Let $\Psi$ be a generator of $\Fitt_{\La(\mc G)}(P^{ev})$. Since $P^{ev}$ vanishes after localization at $(p)$, we have
    $\Psi \in \nr(K_1(\La_{(p)}(\mc G)))$. But then $\Phi \cdot \Psi$ also belongs to $\nr(K_1(\La_{(p)}(\mc G)))$
    and is a generator of
    $$\Fitt_{\qp \La(\mc G)}(\qp \otimes^{L} C\punkt) \cdot \Fitt_{\qp \La(\mc G)}(\qp \otimes P^{ev}) = \Fitt_{\qp \La(\mc G)}(\qp \otimes P^{odd}).$$
    Now \cite{ich-tameII}, Prop.~3.2 implies that $\Phi \cdot \Psi$ is actually a generator of
    $\Fitt_{\La(\mc G)}(P^{odd})$ such that $\Phi$ is a generator of
    $$\Fitt_{\La(\mc G)}(C\punkt) = \Fitt_{\La(\mc G)}(P^{odd}) \cdot \Fitt_{\La(\mc G)}(P^{ev})\me.$$
    \end{proof}

    \subsubsection{Equivariant $L$-values}
    Let us fix a finite Galois extension $K/k$ of number fields with Galois group $G$.
    For any place $v$ of $k$ we fix a place $w$ of $K$ above $v$ and write $G_w$ resp.~$I_w$ for the decomposition
    group resp.~inertia subgroup of $K/k$ at $w$. Moreover, we denote the residual group at $w$ by $\ol G_w = G_w / I_w$
    and choose a lift $\phi_w \in G_w$ of the Frobenius automorphism at $w$. For a (finite) place $w$ we sometimes
    write $\fP_w$ for the associated prime ideal in $K$ and $\ord_w$ for the associated valuation.\\

    If $S$ is a finite set of places of $k$ containing the set $S_{\infty}$
    of all infinite places of $k$, and $\chi$ is a (complex) character of $G$, we denote the $S$-truncated Artin $L$-function
    attached to $\chi$ and $S$ by $L_S(s,\chi)$. Recall that there is a canonical isomorphism
    $\zeta(\C G) = \prod_{\chi \in \irr (G)} \C$, where $\irr (G)$ denotes the set
    of irreducible characters of $G$. We define the equivariant Artin $L$-function to be the
    meromorphic $\zeta(\C G)$-valued function
    $$L_S(s) := (L_S(s,\chi))_{\chi \in \irr (G)}.$$
    If $T$ is a second finite set of places of $k$ such that $S \cap T = \emptyset$, we define
    $\de_T(s) := (\de_T(s,\chi))_{\chi\in \irr (G)}$, where $\de_T(s,\chi) = \prod_{v \in T} \det(1 - N(v)^{1-s} \phi_w\me| V_{\chi}^{I_w})$
    and $V_{\chi}$ is a $G$-module with character $\chi$.
    We put
    $$\Theta_{S,T}(s) := \de_T(s) \cdot L_S(s)^{\sharp},$$
    where we denote by $^{\sharp}: \C G \to \C G$ the involution induced by $g \mapsto g\me$.
    These functions are the so-called $(S,T)$-modified $G$-equivariant $L$-functions and, for $r \in \Z_{\leq 0}$, we define Stickelberger elements
    $$\theta_S^T(K/k, r) = \theta_S^T(r) := \Theta_{S,T}(r) \in \zeta(\C G).$$
    If $T$ is empty, we abbreviate $\theta_S^T(r)$ by $\theta_S(r)$, and if $r=0$, we write $\theta_S^T$ for $\theta_S^T(0)$.
    Now a result of Siegel \cite{Siegel} implies that
    \beq \label{Stickelberger_is _rational}
        \theta_S^T(r) \in \zeta(\Q G)
    \eeq
    for all integers $r \leq 0$.
    Let us fix an embedding $\iota: \C \into \cp$; then
    the image of $\theta_S(r)$ in $\zeta(\qp G)$ via the canonical embedding
    $$\zeta(\Q G) \into \zeta (\qp G) = \bigoplus_{\chi \in \irrp(G) / \sim} \qp (\chi),$$
    is given by $\sum_{\chi} L_S(r, \check \chi^{\iota\me})^{\iota}$ and similarly for $\theta_S^T(r)$.
    Here, the sum runs over all $\cp$-valued irreducible characters of $G$ modulo Galois action.
    Note that we will frequently drop $\iota$ and $\iota\me$ from the notation. Finally, for an irreducible character $\chi$
    with values in $\C$ (resp.~$\cp$) we put
    $e_{\chi} = \frac{\chi(1)}{|G|} \sum_{g \in G} \chi(g\me) g$ which is a central idempotent in $\C G$ (resp.~$\cp G$).

    \subsubsection{Ray class groups}
    For any set $S$ of places of $k$, we write $S(K)$ for the set of places of $K$ which lie above those in $S$.
    Now let $T$ and $S$ be as above. We write $\cl_{K,T}$ for the ray class group of $K$ to the ray
    $\fM_T := \prod_{w \in T(K)} \fP_w$ and $\fo_S$ for the ring of $S(K)$-integers of $K$.
    Let $S_f$ be the set of all finite primes in $S(K)$;
    then there is a natural map $\Z S_f \to \cl_{K,T}$ which sends each prime $w\in S_f$
    to the corresponding class $[\fP_w] \in \cl_{K,T}$. We denote the cokernel of this map by $\cl_{S,T,K} =: \cl_{S,T}$.
    Further, we denote the $S(K)$-units of $K$ by $E_S$ and
    define
    $E_S^T := \left\{x \in E_S: x \equiv 1 \mod \fM_T \right\}$.
    All these modules are equipped with a
    natural $G$-action and we have the following exact sequences of $G$-modules
    \beq \label{ray_class_sequence_ZS}
        E_{S_{\infty}}^T \into E_S^T \stackrel{v}{\lto} \Z S_f \to \cl_{K,T} \onto \cl_{S,T},
    \eeq
    where $v(x) = \sum_{w\in S_f} \ord_w(x) w$ for $x \in E_S^T$,
    and
    \beq \label{ray_class_sequence}
        E_S^T \into E_S \to (\fo_S / \fM_T)\mal \stackrel{\nu}{\lto} \cl_{S,T} \onto \cl_S,
    \eeq
    where the map $\nu$ lifts an element $\ol x \in (\fo_S / \fM_T)\mal$ to $x \in \fo_S$ and
    sends it to the ideal class $[(x)] \in \cl_{S,T}$ of the principal ideal $(x)$.
    Note that the $G$-module $(\fo_S / \fM_T)\mal$ is c.t.~if
    no prime in $T$ ramifies in $K/k$.
    If $S = S_{\infty}$, we also write $E_K^T$ instead of $E_{S_{\infty}}^T$.
    Finally, we suppress the superscript $T$ from the notation if $T$ is empty. If $M$ is a finitely generated
    $\Z$-module and $p$ is a prime, we put $M(p) := \zp \otimes_{\Z} M$.
    In particular, we will be interested in $\cl_{K,T}(p)$ for odd primes $p$; we will abbreviate this module
    by $A_{K,T}$ if $p$ is clear from the context.

    \section{On different formulations of the equivariant Iwasawa main conjecture}

    The following reformulation of the EIMC was given in \cite{ich-tameII}, {\S}2.\\
    Let $p\not=2$ be a prime and let $\mc K/k$ be a Galois extension of totally real fields with
    Galois group $\mc G$, where $k$ is a number field, $\mc K$ contains the cyclotomic $\zp$-extension
    $k_{\infty}$ of $k$ and $[\mc K : k_{\infty}]$ is finite. Hence $\mc G$ is a $p$-adic Lie group of
    dimension $1$ and there is a finite normal subgroup $H$ of $\mc G$ such that
    $\mc G / H = \Gal(k_{\infty} / k) =: \Ga_k$. Here, $\Ga_k$ is isomorphic to the $p$-adic
    integers $\zp$ and we fix a topological generator $\ga_k$.
    If we pick a preimage $\ga$ of $\ga_k$ in $\mc G$, we can choose an integer $m$ such that
    $\ga^{p^m}$ lies in the center of $\mc G$. Hence the ring $R := \zp[[\Ga^{p^m}]]$ belongs
    to the center of $\La(\mc G)$, and $\La(\mc G)$ is an $R$-order in the separable
    $Quot(R)$-algebra $Q(\mc G)$.
    Let $S$ be a finite set of places of $k$
    containing all the infinite places $S_{\infty}$ and the set $S_p$ of all places of $k$ above $p$.
    Moreover, let $M_S$ be the maximal abelian pro-$p$-extension of $\mc K$ unramified outside $S$,
    and denote the Iwasawa module $\Gal(M_S / \mc K)$ by $X_S$. If $S$ additionally contains all places which ramify in $\mc K/k$,
    there is a canonical complex
    \beq \label{canonical_complex}
        C\punkt_S(\mc K/k): \dots  \to 0 \to C\me \to C^0\to 0 \to \dots
    \eeq
    of $R$-torsion $\La(\mc G)$-modules of projective dimension at most $1$ such that
    $H\me(C\punkt_S(\mc K/k)) = X_S$ and $H^0(C\punkt_S(\mc K/k)) = \zp$. For the moment we are insistent
    that this complex is the one constructed by Ritter and Weiss in \cite{towardI}. We will see later that we can
    work with $R\Hom(R\Ga_{\et}(\Spec(\fo_{\mc K}[\frac{1}{S}]), \qp / \zp), \qp / \zp)$ as well. We put (cf.~\cite{towardII}, {\S}4)
    $$\mho_S = \mho_S(\mc K/k) := (C\me) - (C^0) \in \kot{\La (\mc G)}.$$
    Since $\rho(\mho_S) = 0$, there is a well defined Fitting invariant of $\mho_S$; more precisely,
    $$\Fitt_{\La(\mc G)}(\mho_S) := \Fitt_{\La(\mc G)}(C\me : C^0) = \Fitt_{\La(\mc G)}(C\punkt_S(\mc K/k))\me.$$
    We recall some results concerning the algebra $Q (\mc G)$
    due to Ritter and Weiss \cite{towardII}.
    Let $\qpc$ be an algebraic closure of $\qp$ and fix an irreducible ($\qpc$-valued) character $\chi$ of $\mc G$
    with open kernel. Choose a finite field extension $E$ of $\qp$ such that the character $\chi$ has a realization
    $V_{\chi}$ over $E$.
    Let $\eta$ be an irreducible constituent of $\res^{\mc G}_H \chi$ and set
    $$St(\eta) := \{g \in \mc G: \eta^g = \eta \}, ~e_{\eta} = \frac{\eta(1)}{|H|} \sum_{g \in H} \eta(g\me) g,
    ~e_{\chi} = \sum_{\eta | \res^{\mc G}_H \chi} e_{\eta}.$$
    For any finite field extension $K$ of $\qp$ with ring of integers $\fo$,
    we set $Q^K(\mc G) := K \otimes_{\qp} Q(\mc G)$ and $\La^{\fo}(\mc G) = \fo[[\mc G]]$.
    By \cite{towardII}, corollary to Prop.~6, $e_{\chi}$ is a primitive central idempotent of $Q^E (\mc G)$.
    By loc.cit., Prop.~5 there is a distinguished element
    $\ga_{\chi} \in \zeta(Q^E (\mc G)e_{\chi})$ which generates a procyclic $p$-subgroup $\Ga_{\chi}$
    of $(Q^E (\mc G)e_{\chi})\mal$ and acts trivially on $V_{\chi}$.
    Moreover, $\ga_{\chi}$ induces an isomorphism $Q^E (\Ga_{\chi})
    \stackrel{\simeq}{\lto} \zeta( Q^E (\mc G)e_{\chi})$ by loc.cit., Prop.~6.
    For $r \in \N_0$, we define the following maps
    $$j_{\chi}^r: \zeta(Q^E (\mc G)) \onto \zeta(Q^E (\mc G)e_{\chi}) \simeq  Q^E(\Ga_{\chi}) \to  Q^E(\Ga_k),$$
    where the last arrow is induced by mapping $\ga_{\chi}$ to $\ka^r(\ga_{\chi}) \ga_k^{w_{\chi}}$,
    where $w_{\chi} = [\mc G : St(\eta)]$ and $\ka$ denotes the cyclotomic character of $\mc G$.
    Note that $j_{\chi} := j_{\chi}^0$ agrees with the corresponding map $j_{\chi}$ in loc.cit.
    It is shown that for any matrix $\Theta \in M_{n \times n} (Q(\mc G))$ we have
    \beq \label{jchi-und-det}
        j_{\chi} (\nr(\Theta)) = \det\,_{Q^E(\Ga_k)} (\Theta| \Hom_{EH}(V_{\chi},  Q^E(\mc G)^n)).
    \eeq
    Here, $\Theta$ acts on $f \in \Hom_{EH}(V_{\chi},  Q^E(\mc G)^n)$ via right multiplication,
    and $\ga_k$ acts on the left via $(\ga_k f)(v) = \ga_k \cdot f(\ga_k\me v)$ for all $v \in V_{\chi}$.
    Hence the map
    \bea
        \Det(~)(\chi): K_1(Q(\mc G)) & \to & Q^E(\Ga_k)\mal \\
        {[P,\al]}& \mapsto & \det\,_{Q^E(\Ga_k)} (\al| \Hom_{EH}(V_{\chi},  E \otimes_{\qp} P)),
    \eea
    where $P$ is a projective $Q(\mc G)$-module and $\al$ a $Q(\mc G)$-automorphism of $P$, is just $j_{\chi} \circ \nr$.
    If $\rho$ is a character of $\mc G$ of type $W$, i.e.~$\res^{\mc G}_H \rho = 1$, then we denote by
    $\rho^{\sharp}$ the automorphism of the field $Q\clo(\Ga_k) := \qpc \otimes_{\qp} Q(\Ga_k)$ induced by
    $\rho^{\sharp}(\ga_k) = \rho(\ga_k) \ga_k$. Moreover, we denote the additive group generated by all $\qpc$-valued
    characters of $\mc G$ with open kernel by $R_p(\mc G)$; finally, $\Hom^{\ast}(R_p( \mc G), Q\clo(\Ga_k)\mal)$
    is the group of all homomorphisms $f: R_p(\mc G) \to Q\clo(\Ga_k)\mal$ satisfying
    \[ \barr{ll}
        f(\chi \otimes \rho) = \rho^{\sharp}(f(\chi)) & \mbox{ for all characters } \rho \mbox{ of type } W \mbox{ and}\\
        f(\chi^{\sigma}) = f(\chi)^{\sigma} & \mbox{ for all Galois automorphisms } \sigma \in \Gal(\qpc/\qp).
    \earr\]
    We have an isomorphism
    \bea
        \zeta(Q(\mc G))\mal & \simeq & \Hom^{\ast}(R_p( \mc G), Q\clo(\Ga_k)\mal)\\
        x & \mapsto & [\chi \mapsto j_{\chi}(x)].
    \eea
    By loc.cit., Th.~5 the map $\Theta \mapsto [\chi \mapsto \Det(\Theta)(\chi)]$ defines a homomorphism
    $$\Det: K_1(Q(\mc G)) \to \Hom^{\ast}(R_p(\mc G), Q\clo(\Ga_k)\mal)$$
    such that we obtain a commutative triangle
    \beq \label{Det_triangle}\xymatrix{
        & K_1(Q(\mc G)) \ar[dl]_{\nr} \ar[dr]^{\Det} &\\
        {\zeta(Q(\mc G))\mal} \ar[rr]^{\sim} & & {\Hom^{\ast}(R_p( \mc G), Q\clo(\Ga_k)\mal)}.
    }\eeq
    We put $u := \ka(\ga_k)$ and fix a finite set $S$ of places of $k$ containing $S_{\infty}$ and all places which ramify in $\mc K/k$.
    Each topological generator $\ga_k$ of  $\Ga_k$ permits the definition of a power series
    $G_{\chi,S}(T) \in \qpc \otimes_{\qp} Quot(\zp[[T]])$ by starting out from the Deligne-Ribet
    power series for abelian characters of open subgroups of $\mc G$ (cf.~\cite{Deligne-Ribet}).
    One then has an equality
    $$L_{p,S}(1-s,\chi) = \frac{G_{\chi,S}(u^s-1)}{H_{\chi}(u^s-1)},$$
    where $L_{p,S}(s,\chi)$ denotes the $p$-adic Artin $L$-function, and where, for irreducible $\chi$,
    one has
    $$H_{\chi}(T) = \left\{\barr{ll} \chi(\ga_k)(1+T)-1 & \mbox{ if }  H \subset \ker (\chi)\\
                    1 & \mbox{ otherwise.}  \earr\right.$$
    Now \cite{towardII}, Prop.~11 implies that
    $$L_{k,S} : \chi \mapsto \frac{G_{\chi,S}(\ga_k-1)}{H_{\chi}(\ga_k-1)}$$
    is independent of the topological generator $\ga_k$ and lies in $\Hom^{\ast}(R_p( \mc G), Q\clo(\Ga_k)\mal)$.
    Diagram (\ref{Det_triangle}) implies that there is a unique element $\Phi_S \in \zeta(Q(\mc G))\mal$
    such that
    $$j_{\chi}(\Phi_S) = L_{k,S}(\chi).$$
    The EIMC as formulated in \cite{towardII} now states that
    there is a  unique $\Theta_S \in K_1(Q(\mc G))$ such that $\Det(\Theta_S) = L_{k,S}$ and
    $\partial(\Theta_S) = \mho_S$.
    The EIMC without its uniqueness statement hence asserts that there is $x \in K_1(Q(\mc G))$
    such that $\partial(x) = \mho_S$ and $\Det(x) = L_{k,S}$;
    now diagram (\ref{Det_triangle}) implies that $\nr(x) = \Phi_S$, and thus $\Phi_S$
    is a generator of $\Fitt_{\La(\mc G)}(\mho_S)$. Conversely, if $\Phi_S$ is a generator of $\Fitt_{\La(\mc G)}(\mho_S)$,
    then there is an element $x \in K_1(Q(\mc G))$ such that $\partial(x) = \mho_S$ and $\langle \nr(x) \rangle_{\zeta(\La(\mc G))}$ is
    $\nr(\La(\mc G))$-equivalent to $\langle \Phi_S \rangle_{\zeta(\La(\mc G))}$, i.e.~there is an $u \in K_1(\La(\mc G))$ such that
    $\nr (x) = \nr (u) \cdot \Phi_S$. But then $\Theta_S := x \cdot u\me$ has $\partial(\Theta_S) = \partial (x) = \mho_S$
    and $\Det (\Theta_S) = L_{k,S}$, since $\nr(\Theta_S) = \Phi_S$. We have shown that the following conjecture is equivalent
    to the EIMC without the uniqueness of $\Theta_S$.

    \begin{con} \label{EIMC}
        The element $\Phi_S \in \zeta(Q(\mc G))\mal$ is a generator of $\Fitt_{\La(\mc G)}(\mho_S)$.
    \end{con}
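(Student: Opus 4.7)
The plan is to recognize the conjecture as a direct translation of the Ritter--Weiss formulation of the EIMC into the language of Fitting invariants, and then invoke the main theorem on the EIMC (due to Ritter--Weiss, with independent proofs by Kakde and Burns) under the hypothesis that Iwasawa's $\mu$-invariant vanishes. In fact, the equivalence has essentially been laid out in the paragraph immediately preceding the conjecture: since $\mho_S \in \kot{\La(\mc G)}$ lies in the image of $\partial$ (indeed $\rho(\mho_S)=0$), the definition of $\Fitt_{\La(\mc G)}(\mho_S)$ as the class $[\langle \nr(x)\rangle_{\zeta(\La(\mc G))}]_{\nr(\La(\mc G))}$ for any lift $x \in K_1(Q(\mc G))$ of $\mho_S$ reduces the conjecture to exhibiting some such $x$ whose reduced norm equals $\Phi_S$ modulo $\nr(K_1(\La(\mc G)))$.

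First I would appeal to the Ritter--Weiss EIMC, which produces $\Theta_S \in K_1(Q(\mc G))$ satisfying $\partial(\Theta_S) = \mho_S$ and $\Det(\Theta_S) = L_{k,S}$. The next step is to trace through the commutative triangle \eqref{Det_triangle}: by the character-by-character identity $\Det(\Theta_S)(\chi) = j_{\chi}(\nr(\Theta_S))$ together with the isomorphism
\[
    \zeta(Q(\mc G))\mal \;\simeq\; \Hom^{\ast}(R_p(\mc G), Q\clo(\Ga_k)\mal), \quad x \mapsto [\chi \mapsto j_{\chi}(x)],
\]
the element $\nr(\Theta_S) \in \zeta(Q(\mc G))\mal$ corresponds to the function $L_{k,S}$. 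Since $\Phi_S$ was defined by exactly the same property, injectivity of the displayed isomorphism forces $\nr(\Theta_S) = \Phi_S$, which then serves as a genuine generator of $\Fitt_{\La(\mc G)}(\mho_S)$. The converse direction, needed only to confirm that this really is an equivalent reformulation, follows by adjusting any lift $x$ of $\mho_S$ by a unit $u \in K_1(\La(\mc G))$ with $\nr(u) = \nr(x)\Phi_S\me$; then $\Theta_S := x u\me$ satisfies the Ritter--Weiss conditions.

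The main obstacle is not the diagram chase, which is essentially formal once the compatibility \eqref{jchi-und-det} is in place, but the existence of $\Theta_S$ itself: this is the content of the EIMC, and is precisely where the hypothesis $\mu = 0$ enters, via \cite{EIMC-theorem}, \cite{Kakde-mc}, or \cite{Burns-mc}. Consequently Conjecture \ref{EIMC} is genuinely a conjecture in general, but becomes a theorem under the assumption that the Iwasawa $\mu$-invariant of $X_S$ (equivalently, of the canonical complex $C\punkt_S(\mc K/k)$) vanishes; in that regime $\Phi_S$ does generate $\Fitt_{\La(\mc G)}(\mho_S)$ unconditionally.
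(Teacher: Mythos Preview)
Your proposal is correct and matches the paper's treatment. Since the statement is a \emph{conjecture}, the paper does not prove it outright; what it does (in the paragraph immediately preceding the conjecture) is exactly the equivalence argument you give---using diagram \eqref{Det_triangle} to pass from $\Det(\Theta_S)=L_{k,S}$ to $\nr(\Theta_S)=\Phi_S$, and conversely adjusting a lift $x$ by $u\in K_1(\La(\mc G))$---and then records as Theorem~\ref{EIMC-Ritter-Weiss} that the conjecture holds when $\mu(X_S)=0$ by \cite{EIMC-theorem}. Your write-up reproduces both the equivalence and the conditional appeal to Ritter--Weiss, so there is nothing to add.
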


    The following theorem is due to Ritter and Weiss \cite{EIMC-theorem}:

    \begin{theo} \label{EIMC-Ritter-Weiss}
        Conjecture \ref{EIMC} is true provided that the $\mu$-invariant $\mu(X_S)$ vanishes.
    \end{theo}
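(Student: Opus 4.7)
The plan is to invoke the equivalence already established in the discussion preceding the theorem. Specifically, the text showed that Conjecture \ref{EIMC} is equivalent to the Ritter--Weiss formulation of the EIMC without the uniqueness of $\Theta_S$: the existence of some $x \in K_1(Q(\mc G))$ with $\partial(x) = \mho_S$ and $\Det(x) = L_{k,S}$ is tantamount to $\Phi_S$ being a generator of $\Fitt_{\La(\mc G)}(\mho_S)$. This equivalence is purely formal and relies only on the commutativity of the triangle (\ref{Det_triangle}) together with the defining identity $j_{\chi}(\Phi_S) = L_{k,S}(\chi)$, plus the fact that $\partial$ depends only on $\nr(x)$ up to $\nr(\La(\mc G))$-equivalence.

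Given this reduction, the first step is to apply the main theorem of Ritter and Weiss in \cite{EIMC-theorem}, which under the hypothesis $\mu(X_S) = 0$ produces an element $\Theta_S \in K_1(Q(\mc G))$ satisfying $\partial(\Theta_S) = \mho_S$ and $\Det(\Theta_S) = L_{k,S}$. The triangle (\ref{Det_triangle}) then forces $\nr(\Theta_S) = \Phi_S$, and by definition of the Fitting invariant of a complex with vanishing $\rho$-image this means that $\Phi_S$ generates $\Fitt_{\La(\mc G)}(\mho_S)$. Hence Conjecture \ref{EIMC} holds, and no further technical work is needed inside this section.

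The genuine obstacle lies outside the section: it is the Ritter--Weiss proof of the EIMC in \cite{EIMC-theorem} itself, which combines a delicate $K_1$-logarithm analysis on $Q(\mc G)$ with the Deligne--Ribet $p$-adic $L$-function construction, and crucially uses $\mu(X_S) = 0$ to guarantee that $\mho_S$ lies in the image of $\partial$ restricted to the relevant subgroup. Alternative arguments due to Kakde \cite{Kakde-mc} and Burns \cite{Burns-mc}, likewise requiring $\mu = 0$, would suffice equally well, but the Ritter--Weiss formalism matches the notation set up in this section most directly and is therefore the cleanest reference.
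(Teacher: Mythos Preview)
Your proposal is correct and matches the paper's treatment: the paper does not give a separate proof of Theorem~\ref{EIMC-Ritter-Weiss} at all, but simply attributes it to Ritter and Weiss \cite{EIMC-theorem} after having established, in the preceding paragraph, the equivalence between Conjecture~\ref{EIMC} and the Ritter--Weiss EIMC without uniqueness. Your write-up makes explicit precisely this reduction and citation, which is exactly what is intended.
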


    We also discuss Conjecture \ref{EIMC} within the framework of the theory of \cite{CFKSV}, {\S}3.
    For this, let
    $$\pi: \mc G \to \Gl_n(\fo_E)$$
    be a continuous homomorphism, where $\fo_E$ denotes the ring of integers of $E$
    and $n$ is some integer greater or equal to $1$.
    There is a ring homomorphism
    \beq \label{first_Phi}
        \Phi_{\pi}: \La(\mc G) \to M_{n\times n}(\La^{\fo_E}(\Ga_k))
    \eeq
    induced by the continuous group homomorphism
    \bea
        \mc G & \to & (M_{n \times n}(\fo_E) \otimes_{\zp} \La(\Ga_k))\mal = \Gl_n(\La^{\fo_E}(\Ga_k))\\
        \si & \mapsto & \pi(\si) \otimes \ol \si,
    \eea
    where $\ol \si$ denotes the image of $\si$ in $\mc G / H = \Ga_k$. By loc.cit., Lemma 3.3 the
    homomorphism (\ref{first_Phi}) extends to a ring homomorphism
    $$\Phi_{\pi}: Q(\mc G) \to M_{n\times n}(Q^E(\Ga_k))$$
    and this in turn induces a homomorphism
    $$\Phi_{\pi}': K_1(Q(\mc G)) \to K_1(M_{n\times n}(Q^E(\Ga_k))) = Q^E(\Ga_k)\mal.$$
    Let $\aug: \La^{\fo_E}(\Ga_k) \onto \fo_E$ be the augmentation map and put $\fp = \ker(\aug)$.
    Writing $\La^{\fo_E}(\Ga_k)_{\fp}$ for the localization of $\La^{\fo_E}(\Ga_k)$ at $\fp$, it is clear that
    $\aug$ naturally extends to a homomorphism $\aug: \La^{\fo_E}(\Ga_k)_{\fp} \to E$.
    One defines an evaluation map
    \bea
    \phi: Q^E(\Ga_k) & \to & E \cup \{\infty\}\\
    x & \mapsto & \left\{ \barr{ll} \aug (x) & \mbox{ if } x \in \La^{\fo_E}(\Ga_k)_{\fp}\\
                        \infty & \mbox{ otherwise}. \earr \right.
    \eea
    If $\Theta$ is an element of $K_1(Q(\mc G))$, we define $\Theta(\pi)$ to be $\phi(\Phi_{\pi}'(\Theta))$.
    We need the following lemma.
    \begin{lem}
        If $\pi = \pi_{\chi}$ is a representation of $\mc G$ with character $\chi$ and $r\in \N_0$, then
        \[ \xymatrix{
            K_1(Q(\mc G)) \ar[r]^-{\Phi_{\pi_{\chi}\ka^r}'} \ar[d]^{\nr} & K_1(M_{n \times n}(Q^E(\Ga_k))) \ar[d]^{\nr}_{\simeq}\\
            {\zeta(Q(\mc G))\mal} \ar[r]^{j_{\chi}^r} & Q^E(\Ga_k)\mal
        } \]
        commutes. In particular, we have $\nr \circ \Phi_{\pi_{\chi}}' = \Det(~)(\chi)$.
    \end{lem}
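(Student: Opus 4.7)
My plan is to reduce the $r$-twisted statement to the case $r = 0$ (the ``in particular'' assertion) and then prove that case by combining the definition of $\Phi'_{\pi_\chi}$ via Morita equivalence with equation (\ref{jchi-und-det}).

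Both legs of the square are group homomorphisms $K_1(Q(\mc G)) \to Q^E(\Ga_k)\mal$, so it suffices to verify commutativity on a representative class $[Q(\mc G)^m, \Theta]$ with $\Theta \in \Gl_m(Q(\mc G))$. For $r = 0$, I would unwind the definition of $\Phi'_{\pi_\chi}$: the image of $[Q(\mc G)^m, \Theta]$ is the class of $\Phi_{\pi_\chi}(\Theta)$ acting on $M_{n \times n}(Q^E(\Ga_k))^m$, and under the Morita identification $K_1(M_{n \times n}(Q^E(\Ga_k))) \simeq Q^E(\Ga_k)\mal$, the reduced norm becomes the $Q^E(\Ga_k)$-determinant of $\Theta$ acting on the module $\Hom_{EH}(V_\chi, Q^E(\mc G)^m)$. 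Equation (\ref{jchi-und-det}) then identifies this determinant with $j_\chi(\nr(\Theta))$, which settles the $r = 0$ case. Combined with the commutative triangle (\ref{Det_triangle}), this yields the identity $\nr \circ \Phi'_{\pi_\chi} = \Det(~)(\chi)$.

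For general $r$, the crucial observation is that twisting $\pi_\chi$ by $\ka^r$ transforms $\Phi_{\pi_\chi}(\si)$ into $\ka^r(\si) \cdot \Phi_{\pi_\chi}(\si)$; since $\ka$ factors through $\mc G/H = \Ga_k$, the scalar factor sits in $\La(\Ga_k)\mal$ and propagates cleanly through the Morita identification. Dually, $j_\chi^r$ differs from $j_\chi$ precisely by the substitution $\ga_\chi \mapsto \ka^r(\ga_\chi) \ga_k^{w_\chi}$ in the last arrow of its definition. A short direct calculation should show that these two modifications correspond under the identification used in the $r=0$ case, so commutativity propagates to arbitrary $r$.

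The main obstacle is this twist-compatibility in the last step: one has to verify that the diagonal scalar twist by $\ka^r$ entering $\Phi_{\pi_\chi \ka^r}$ matches exactly, after reduced norm and the isomorphism $\zeta(Q^E(\mc G) e_\chi) \simeq Q^E(\Ga_\chi)$, the substitution $\ga_\chi \mapsto \ka^r(\ga_\chi) \ga_k^{w_\chi}$ appearing in $j_\chi^r$. Settling this requires carefully unpacking the construction of $\ga_\chi$ from \cite{towardII}, in particular that it is central in $Q^E(\mc G) e_\chi$ and acts trivially on $V_\chi$, and tracking how the cyclotomic character interacts with the various reduced-norm identifications.
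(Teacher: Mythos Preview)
The paper does not prove this lemma at all; it simply cites \cite{ich-tameII}, Lemma~2.3. So there is no in-paper argument to compare against, and your outline must be judged on its own.

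Your $r=0$ strategy is correct. The substantive point is that the ring map $\Phi_{\pi_\chi}$ equips $Q^E(\Ga_k)^n$ with a $Q(\mc G)$-module structure (namely $V_\chi \otimes_E Q^E(\Ga_k)$ with diagonal $\mc G$-action), and one must identify this with $\Hom_{EH}(V_\chi, Q^E(\mc G))$ as $Q^E(\Ga_k)$-modules so that right multiplication by $\Theta$ matches. Once that identification is made, the determinant of $\Phi_{\pi_\chi}(\Theta) \in \Gl_{mn}(Q^E(\Ga_k))$ coincides with the right-hand side of (\ref{jchi-und-det}), and the diagram commutes. You have located the work accurately.

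For general $r$, your direct twist-tracking will succeed, but there is a shorter route that avoids most of the unpacking you flag as the main obstacle. Since $\ka$ factors through $\Ga_k$, the character $\ka^r$ is of type~$W$; hence $\chi \otimes \ka^r$ is again irreducible with $e_{\chi \otimes \ka^r} = e_\chi$, and the representation $\pi_\chi \ka^r$ has character $\chi \otimes \ka^r$. Applying the already-proved $r=0$ case to this character gives $\nr \circ \Phi'_{\pi_\chi \ka^r} = j_{\chi \otimes \ka^r} \circ \nr$, so it remains only to check $j_{\chi \otimes \ka^r} = j_\chi^r$. The $\Hom^\ast$-property $j_{\chi \otimes \rho} = \rho^{\sharp} \circ j_\chi$ for type-$W$ characters $\rho$, specialised to $\rho = \ka^r$, reduces this to the single identity $\ka(\ga_\chi) = \ka(\ga_k)^{w_\chi}$, which is immediate from the construction of $\ga_\chi$ in \cite{towardII} (it arises from an element of $\mc G$ projecting to $\ga_k^{w_\chi}$ in $\Ga_k$, and $\ka$ factors through $\Ga_k$). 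This replaces your anticipated careful chase through the reduced-norm identifications by one line.
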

    \begin{proof}
    This is \cite{ich-tameII}, lemma 2.3.
    \end{proof}

    Conjecture \ref{EIMC} now implies that there is an element $\Theta_S \in K_1(Q(\mc G))$ such that
    $\partial(\Theta_S) = \mho_S$ and for any $r \geq 1$ divisible by $p-1$ we have
    $$\Theta_S(\pi_{\chi}\ka^r) = \phi(j_{\chi}^r(\Phi_S)) = L_S(1-r,\chi).$$

    The following result explains, why we may replace the complex (\ref{canonical_complex}) by the complex
    $R\Hom(R\Ga_{\et}(\Spec(\fo_{\mc K}[\frac{1}{S}]), \qp / \zp), \qp / \zp)$. Though it might  be no surprise
    to experts, the author is not aware of any reference for this result.

    \begin{theo}
    With the notation as above, there is an isomorphism
    $$C\punkt_S(\mc K/k) \simeq R\Hom(R\Ga_{\et}(\Spec(\fo_{\mc K}[\frac{1}{S}]), \qp / \zp), \qp / \zp)$$
    in $\mc D(\La(\mc G))$. In particular, there is an equality
    $$\mho_S = - [R\Hom(R\Ga_{\et}(\Spec(\fo_{\mc K}[\frac{1}{S}]), \qp / \zp), \qp / \zp)] \in \kot{\La(\mc G)}.$$
    \end{theo}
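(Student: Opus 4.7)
The plan is to prove the two complexes are isomorphic in $\mc D(\La(\mc G))$ by matching cohomology, checking perfectness, and then identifying the Yoneda $2$-extension class of each. First, I would compute the cohomology of $D := R\Hom(R\Ga_{\et}(\Spec(\fo_{\mc K}[\tfrac{1}{S}]), \qp/\zp), \qp/\zp)$. Since $\qp/\zp$ is injective as a $\zp$-module, $R\Hom(-, \qp/\zp)$ is exact and coincides with termwise Pontryagin dualization, so $H^{-i}(D)$ is the Pontryagin dual of $H^i_{\et}(\Spec(\fo_{\mc K}[\tfrac{1}{S}]), \qp/\zp)$. Identifying étale cohomology with continuous Galois cohomology of $G_S(\mc K)$ yields $H^0(D) = \zp$, $H^{-1}(D) = X_S$, and $H^{-2}(D) = H^2(G_S(\mc K), \qp/\zp)^{\vee}$. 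The last group vanishes by the weak Leopoldt conjecture, which is known unconditionally for the cyclotomic $\zp$-extension $k_{\infty}$ by Iwasawa and is inherited by $\mc K$ since $[\mc K : k_{\infty}]$ is finite. Thus $D$ has the same cohomology as $C\punkt_S(\mc K/k)$, concentrated in degrees $-1$ and $0$.

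Second, I would verify that $D$ admits a two-term perfect presentation of $\La(\mc G)$-modules in degrees $[-1, 0]$: via Pontryagin duality it is quasi-isomorphic to the Iwasawa cohomology complex $R\Ga_{\mr{Iw},S}(\mc K/k, \zp)$, whose perfectness for $p$-adic Lie groups of dimension $1$ is a standard input (cf.~Nekov\'a\v r or Fukaya-Kato). Consequently both $C\punkt_S(\mc K/k)$ and $D$ lie in $\mc D^{\perf}(\La(\mc G))$ as perfect $2$-term complexes with identical cohomology, so each determines a Yoneda $2$-extension class in $\Ext^2_{\La(\mc G)}(\zp, X_S) \simeq H^2(\mc G, X_S)$, and they are isomorphic in $\mc D(\La(\mc G))$ iff these classes agree.

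Third, I would show that both classes coincide with the class $[\mathfrak{U}_S] \in H^2(\mc G, X_S)$ of the canonical group extension
$$1 \to X_S \to \mathfrak{U}_S \to \mc G \to 1,$$
where $\mathfrak{U}_S := \Gal(M_S/k)$. For the Ritter-Weiss complex $C\punkt_S(\mc K/k)$ this is essentially by construction in \cite{towardI}, where the complex is obtained as a translation of precisely this extension. For $D$, the class is the connecting map of the truncation triangle $X_S[1] \to D \to \zp \to X_S[2]$, which via Pontryagin duality is dual to the $d_2$-differential $E_2^{0,1} \to E_2^{2,0}$ of the Hochschild-Serre spectral sequence for $1 \to G_S(\mc K) \to G_S(k) \to \mc G \to 1$ with coefficients $\qp/\zp$; the latter is well-known to be cup product with $[\mathfrak{U}_S]$ in this low-degree range.

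The main obstacle is making the matching of $2$-extension classes fully rigorous. The Ritter-Weiss complex is built by an explicit group-theoretic translation of an exact sequence, whereas $D$ arises from an abstract derived-categorical operation on étale cohomology, so checking that their Yoneda classes literally coincide (rather than being merely related by an abstract isomorphism) requires careful bookkeeping of shifts, signs, and the natural identification between $\Ext^2_{\La(\mc G)}$ and group cohomology $H^2(\mc G, -)$. A concrete way to finish would be to fix a two-term $\La(\mc G)$-projective resolution of $\zp$ and to exhibit an explicit quasi-isomorphism matching each term of $D$ with the corresponding piece of the Ritter-Weiss translation construction in \cite{towardI}; the statement about $\mho_S$ then follows immediately from the isomorphism in $\mc D(\La(\mc G))$ and the definition of $\mho_S$ as the class of $-C\punkt_S(\mc K/k)$ in $\kot{\La(\mc G)}$.
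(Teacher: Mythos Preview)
Your strategy is correct in outline but takes a genuinely different route from the paper. You aim to match the two complexes abstractly by comparing their Yoneda classes in $\Ext^2_{\La(\mc G)}(\zp, X_S)$, whereas the paper constructs an explicit quasi-isomorphism. The paper's key move is the duality identification
\[
R\Hom(R\Ga(G_S(\mc K),\qp/\zp),\qp/\zp)\;\simeq\;\zp\otimes^L_{\La(X_S)}\zp
\]
(via the tensor--hom isomorphism $M\otimes^L_{\La(X_S)} R\Hom(N,\qp/\zp)\simeq R\Hom(R\Hom_{\La(X_S)}(M,N),\qp/\zp)$ with $M=\zp$, $N=\qp/\zp$), followed by applying $\zp\,\hat\otimes_{\La(X_S)}-$ to the augmentation sequence $\De G_S\hookrightarrow\La(G_S)\twoheadrightarrow\zp$. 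This produces an explicit two-term model $Y_S\to\La(\mc G)$ with $Y_S=\De(G_S)/\De(G_S,X_S)\De(G_S)$, and the point is that this is literally the Ritter--Weiss translation of the extension $X_S\hookrightarrow G_S\twoheadrightarrow\mc G$; the complex $C\punkt_S(\mc K/k)$ is then obtained from $Y_S\to\La(\mc G)$ by a simple modification along a free summand, visibly a quasi-isomorphism.

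What each approach buys: the paper's route sidesteps entirely the bookkeeping of signs, shifts, and the identification $\Ext^2_{\La(\mc G)}(\zp,X_S)\simeq H^2(\mc G,X_S)$ that you flag as your main obstacle, and it never needs to invoke perfectness results for Iwasawa cohomology (Nekov\'a\v r / Fukaya--Kato) as a black box. Your approach is conceptually cleaner in that it isolates exactly the invariant---the extension class---that distinguishes two-term complexes with given cohomology, but as you yourself note, carrying out the Hochschild--Serre $d_2$ identification rigorously is delicate, and your proposed fallback (``exhibit an explicit quasi-isomorphism matching each term'') is essentially the paper's argument. In short, your plan would work, but the paper's explicit construction via $\zp\otimes^L_{\La(X_S)}\zp$ and the augmentation sequence is both shorter and more self-contained.
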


    \begin{proof}
    Since $\qp / \zp$ is a direct limit of finite abelian groups of $p$-power order, we have an isomorphism with Galois cohomology
    \beq \label{et-to-gal-coh}
        R\Ga_{\et}(\Spec(\fo_{\mc K}[\frac{1}{S}]), \qp / \zp) \simeq R\Ga(X_S, \qp / \zp).
    \eeq
    We put $G_S := \Gal(M_S / k)$. Now for any compact (right) $\La(G_S)$-modules $M$ and discrete (left) $\La(G_S)$-module $N$
    (considered as complexes in degree zero), there is an isomorphism
    $$M \otimes^L_{\La(X_S)} R\Hom(N, \qp / \zp) \simeq R\Hom(R\Hom_{\La(X_S)}(M,N), \qp / \zp)$$
    in $\mc D(\La(\mc G))$ (cf.~\cite{NSW}, Cor.~5.2.9 or \cite{Weibel-int-hom}, Th.~10.8.7). Noting that
    $R\Hom_{\La(X_S)}(\zp, N)$ identifies with $R\Ga(X_S, N)$ we specialize $M = \zp$ and $N = \qp / \zp$ which yields an isomorphism
    \beq \label{zpzp-to-gal-coh}
        \zp \otimes^L_{\La(X_S)} \zp \simeq R\Hom(R\Ga(X_S, \qp / \zp), \qp / \zp)
    \eeq
    in $\mc D(\La(\mc G))$. Now we consider the short exact sequence
    $\De G_S \into \La(G_S) \onto \zp$
    of left $\La(G_S)$-modules, where the surjection is the augmentation map and $\De(G_S)$ denotes its kernel. We now apply
    $\zp \hat{\otimes}_{\La(X_S)} \_$ to this sequence and obtain the following exact homology sequence:
    \[ \xymatrix{
        {H_1(X_S,\zp)} \ar[d]^{\simeq} \inj[r] & H_0(X_S, \De (G_S)) \ar[r] \equal[d] & H_0(X_S, \La(G_S)) \sur[r] \equal[d] & H_0(X_S, \zp) \equal[d]\\
        X_S \inj[r] & \zp \hat{\otimes}_{\La(X_S)} \De (G_S) \ar[r] &  \La(\mc G) \sur[r] & \zp
    }\]
    In particular, we find that
    $$H_i(X_S, \De (G_S)) = H_i(X_S, \La(G_S)) = 0 \mbox{ for all } i > 0.$$
    Hence the exact triangle
    $$\zp \otimes^L_{\La(X_S)} \De (G_S) \to \zp \otimes^L_{\La(X_S)} \La (G_S) \to \zp \otimes^L_{\La(X_S)} \zp \to$$
    implies that the complex
    \beq \label{realizationcomplex}
        \zp \hat{\otimes}_{\La(X_S)} \De (G_S) \to  \La(\mc G)
    \eeq
    of the above homology sequence is isomorphic to $\zp \otimes^L_{\La(X_S)} \zp$
    in $\mc D(\La(\mc G))$, and hence also to $R\Hom(R\Ga_{\et}(\Spec(\fo_{\mc K}[\frac{1}{S}]), \qp / \zp), \qp / \zp)$ using (\ref{et-to-gal-coh})
    and (\ref{zpzp-to-gal-coh}). Let $\De(G_S, X_S)$ be the closure of the right $\La(G_S)$-ideal generated by $x-1$, $x \in X_S$. Then
    $$\zp \hat{\otimes}_{\La(X_S)} \De (G_S) = \De(G_S) / \De(G_S,X_S) \De(G_S) =: Y_S$$
    and the map in (\ref{realizationcomplex}) is induced by mapping $g-1$ to $\ol g -1$, where $\ol g$ denotes the image
    of an element $g \in G_S$ in $\mc G$ under the canonical projection $G_S \onto \mc G$. But the translation functor of
    Ritter and Weiss transfers the exact sequence $X_S \into G_S \onto \mc G$ into
    $$X_S \into Y_S \onto \De (\mc G),$$
    where the projection is induced in exactly the same way. Hence if we glue this sequence with the natural augmentation sequence
    $\De(\mc G) \into \La(\mc G) \onto \zp$,
    we obtain exactly the homology sequence above. The result now follows, once we observe that the complex (\ref{canonical_complex}) of Ritter and
    Weiss is achieved by a commutative diagram
    \[ \xymatrix{
        & \La(\mc G) \inj[d] \equal[r] & \La(\mc G) \inj[d] & \\
        X_S \inj[r] \equal[d] & Y_S \sur[d] \ar[r] & \La(\mc G) \sur[r] \sur[d] & \zp \equal[d] \\
        X_S \inj[r] & C\me \ar[r] & C^0 \sur[r] & \zp
    }\]
    \end{proof}

    \begin{rem}
        For more information concerning the work of Ritter and Weiss in comparison with Kakde's approach, the reader may consult
        Venjakob's recent survey article \cite{Venjakob-comparison}.
    \end{rem}

    \section{Another reformulation of the equivariant Iwasawa main conjecture}

    We first recall the notion of abstract $1$-motives and their basic properties as formulated in \cite{EIMC-reform}.
    For an arbitrary abelian group $J$ and a positive integer $m$, we denote by $J[m]$ the maximal $m$-torsion subgroup of $J$.
    For a prime $p$ the $p$-adic Tate module of $J$ is defined to be
    $$T_p(J) := \varprojlim_n J[p^n],$$
    where the projective limit is taken with respect to multiplication by $p$. An abelian, divisible group $J$ is of finite
    local corank if for any prime $p$ there is an integer $r_p(J)$ and a $\zp$-isomorphism
    $$J[p^{\infty}] \simeq (\qp / \zp)^{r_p(J)},$$
    where $J[p^{\infty}] = \bigcup_n J[p^n]$.

    \begin{defi}
    An abstract $1$-motive $\mc M := [L \stackrel{\de}{\lto} J]$ consists of the following data.
    \bit
        \item
        a free $\Z$-module $L$ of finite rank;
        \item
        an abelian, divisible group $J$ of finite local corank;
        \item
        a group morphism $\de: L \to J$.
    \eit
    \end{defi}

    Now let $n \geq 1$ be an integer and consider the fiber product $J \times^n_J L$ with respect to the map
    $\de$ and the multiplication by $n$ map $J \stackrel{n} {\onto} J$.

    \begin{defi}
    The group $\mc M[n] := (J \times^n_J L) \otimes \Z / n \Z$ is called the group of $n$-torsion points of $\mc M$.
    Moreover, if $n \mid m$, there are canonical surjective multiplication by $m/n$ maps $\mc M[m] \to \mc M[n]$ and we define the $p$-adic
    Tate module of $\mc M$ to be
    $$T_p(\mc M) := \varprojlim_n \mc M[p^n].$$
    \end{defi}

    In this way, we obtain for every prime $p$ an exact sequence of free $\zp$-modules
    \beq \label{ses-1-motive}
        T_p(J) \into T_p(\mc M) \onto \zp \otimes L.
    \eeq
    We are now going to define an Iwasawa theoretic abstract $1$-motive. Again, we follow the treatment in \cite{EIMC-reform}.
    For this, let $p$ be an odd prime and $\mc K$ be the cyclotomic $\zp$-extension of a number field $K$ and let $K_n$ denote its $n$-th layer, $n \in \N$.
    We denote the set of $p$-adic
    places of $\mc K$ by $\mc S_p$ and fix two finite sets $\mc S$ and $\mc T$ of places of $\mc K$ such that $\mc T \cap (\mc S \cup \mc S_p) = \emptyset$.
    The divisor group of $\mc K$ is given by
    $$\Div_{\mc K} := \bigoplus_v  \Ga_v \cdot v,$$
    where the direct sum runs over all finite primes of $\mc K$ and $\Ga_v = \Z$ (resp.~$\Ga_v = \Z[\frac{1}{p}]$) if $v \not\in \mc S_p$
    (resp.~$v \in \mc S_p$). Note that in both cases $\Ga_v$ identifies with the value group of an appropriate chosen valuation $\ord_v$ corresponding to $v$.
    We let
    $$\Div_{\mc K,\mc T} := \bigoplus_{v \not\in \mc T}  \Ga_v \cdot v,~\mc K_{\mc T}\mal := \left\{x \in \mc K\mal \mid \ord_v(x-1)>0 ~\forall v\in \mc T\right\}.$$
    The usual divisor map induces a group morphism
    $$\mathrm{div}_{\mc K}: \mc K_{\mc T}\mal \to \Div_{\mc K,\mc T}, ~x \mapsto \sum_v \ord_v(x)$$
    and we define a generalized ideal class group by
    $$C_{\mc K,\mc T} := \frac{\Div_{\mc K, \mc T}}{\mathrm{div}_{\mc K}(\mc K_{\mc T}\mal)}.$$
    Then the classical Iwasawa $\mu$-invariant attached to $\mc K$ and $p$ vanishes if and only if $\mc A_{\mc K, \mc T} := C_{\mc K,\mc T} \otimes \zp$ is divisible of finite local corank.
    Note that the vanishing of $\mu$ only depends upon $\mc K$ and $p$, but not on the number field $K$. We will henceforth assume that $\mu=0$
    and associate to the data $(\mc K, \mc S, \mc T)$ the abstract $1$-motive
    $$\mc M_{\mc S, \mc T}^{\mc K} := [\Div_{\mc K}(\mc S \sm \mc S_p) \stackrel{\de}{\lto} \mc A_{\mc K, \mc T}],$$
    where $\Div_{\mc K}(\mc S \sm \mc S_p)$ is the group of divisors of $\mc K$ supported on $\mc S \sm \mc S_p$ and $\de$ is induced
    by the usual divisor class map. In particular, the exact sequence (\ref{ses-1-motive}) now reads
    \beq \label{ses-Iwasawa-motive}
        T_p(\mc A_{\mc K, \mc T}) \into T_p(\mc M_{\mc S, \mc T}^{\mc K}) \onto \Div_{\mc K}(\mc S \sm \mc S_p) \otimes \zp.
    \eeq

     Now assume that $\mc K$ is a Galois extension of a totally real number field $k$,
     and that $\mc K$ is the cyclotomic $\zp$-extension of a CM-number field $K$.
     Let $\mc G = \Gal(\mc K/k)$ and let $j \in \mc G$ denote the unique central automorphism in $\mc G$ which is induced by complex conjugation.
     Let $\mc K^+$ be the maximal real subfield of $\mc K$ and $\mc G^+ = \mc G / \langle j \rangle$ its Galois group over $k$.
     We fix two finite, non-empty, disjoints sets $S$ and $T$ of places in $k$, such that $S$
     contains $S\ram(\mc K/k) \cup S_{\infty}$. Let $\mc S$ and $\mc T$ be the sets of finite primes in $\mc K$ sitting above primes in $S$ and
     $T$, respectively.
     Then by \cite{EIMC-reform}, Th. 4.6 the Tate module $T_p(\mc M_{\mc S, \mc T}^{\mc K})^-$ is a $\zp$-free torsion $\La(\mc G)_- := \La(\mc G) / (1+j)$-module of projective dimension at most $1$.
     For $v \in T$ we put
     $$\xi_v := \nr(\ka(\phi_{w}) - \phi_{w}),$$
     where $\phi_{w} \in \mc G$ denotes the Frobenius at a chosen prime $w$ in $\mc K$ above $v$. Let $x \mapsto \dot{x}$ be the
     automorphism on $\La(\mc G)$ induced by $g \mapsto \ka(g)g\me$ for $g \in \mc G$. We put
     $$\Psi_{S,T} = \Psi_{S,T}(\mc K / k) := \prod_{v \in T} \xi_v \cdot \dot \Phi_S.$$
     For any positive integer $n$, we denote by $\zeta_n$ a primitive $n$-th root of unity.
     We are now ready to prove the following variant of the equivariant Iwasawa main conjecture which generalizes \cite{EIMC-reform}, Th.~5.6
     to the non-abelian situation.

     \begin{theo}\label{EIMC-GP}
        Let $(\mc K / k, S, T, p)$ be as above. If Iwasawa's $\mu$-invariant attached to $\mc K(\zeta_p)$ and $p$ vanishes, then
        $\Psi_{S,T}$ is a generator of $\Fitt_{\La(\mc G)_-}(T_p(\mc M_{\mc S, \mc T}^{\mc K})^-)$.
     \end{theo}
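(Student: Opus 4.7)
The plan is to deduce the theorem from the Ritter--Weiss EIMC (Theorem \ref{EIMC-Ritter-Weiss}) applied to the maximal totally real subextension $\mc K^+/k$ with Galois group $\mc G^+ = \mc G / \langle j \rangle$. Since $\mc K^+$ contains the cyclotomic $\zp$-extension of $k$, the setup of Theorem \ref{EIMC-Ritter-Weiss} is met there; and under the hypothesis $\mu(\mc K(\zeta_p)) = 0$, the relevant Iwasawa $\mu$-invariants (both of $X_S$ for $\mc K^+/k$ and of the minus-side modules) vanish. Thus Theorem \ref{EIMC-Ritter-Weiss} produces a generator $\Phi_S \in \zeta(Q(\mc G^+))^\times$ of $\Fitt_{\La(\mc G^+)}(\mho_S(\mc K^+/k))$, and the task is to translate this generator into one for $\Fitt_{\La(\mc G)_-}(T_p(\mc M_{\mc S, \mc T}^{\mc K})^-)$.

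The core step is an identification in $\mc D^\perf\tor(\La(\mc G)_-)$ between the minus part of a Tate-twisted version of the Ritter--Weiss complex and a perfect complex representing $T_p(\mc M_{\mc S, \emptyset}^{\mc K})^-$. By the theorem just established, one has $C^\cdot_S(\mc K^+/k) \simeq R\Hom(R\Ga_{\et}(\Spec(\fo_{\mc K^+}[\frac{1}{S}]), \qp/\zp), \qp/\zp)$. Pontryagin duality together with a Tate twist by $\zp(1)$ converts this into a shift of $R\Ga_{\et}(\Spec(\fo_\mc K[\frac{1}{S}]), \zp(1))$; its $H^2$ on the minus component, by Poitou--Tate duality and Kummer theory, is naturally an extension of $T_p(\mc A_{\mc K})^-$ by the local terms $\bigoplus_{v \in \mc S \sm \mc S_p} \zp$, which match exactly the defining sequence (\ref{ses-Iwasawa-motive}) for $\mc M_{\mc S, \emptyset}^{\mc K}$ on the minus side. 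At the level of Fitting-invariant generators, the composite of duality and Tate twist is implemented by the involution $\dot{~}$: a direct computation shows that $g \mapsto \ka(g) g^{-1}$ factors as the $\sharp$-involution $g \mapsto g^{-1}$ (from Pontryagin duality) followed by the Tate-twist map $g \mapsto \ka(g)g$. Hence $\dot \Phi_S$ is a generator of $\Fitt_{\La(\mc G)_-}(T_p(\mc M_{\mc S, \emptyset}^{\mc K})^-)$.

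To introduce the $T$-modification, I use the natural short exact sequence
\begin{equation*}
    \bigoplus_{v \in T}(\zp(1))^- \into T_p(\mc M_{\mc S, \mc T}^{\mc K})^- \onto T_p(\mc M_{\mc S, \emptyset}^{\mc K})^-
\end{equation*}
coming from comparing $\mc A_{\mc K}$ with $\mc A_{\mc K, \mc T}$. For each $v \in T$ the Fitting invariant of the corresponding local term is generated by the reduced norm of Frobenius acting on $\zp(1)$, namely $\nr(\ka(\phi_w) - \phi_w) = \xi_v$. Multiplicativity of Fitting invariants across the short exact sequence then yields that $\Psi_{S,T} = \prod_{v \in T} \xi_v \cdot \dot \Phi_S$ generates $\Fitt_{\La(\mc G)_-}(T_p(\mc M_{\mc S, \mc T}^{\mc K})^-)$.

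The main obstacle is upgrading the above Fitting-invariant identifications from the semi-simple algebra $Q(\mc G)_-$ (where, by reduction to characters, they follow from the abelian case \cite{EIMC-reform}, Th.~5.6 applied to the abelian sub-extensions) to the integral group ring $\La(\mc G)_-$ in the non-abelian setting. This is precisely what Proposition \ref{Fitt_becomes_int} was designed to handle: by \cite{EIMC-reform}, Th.~4.6, the module $T_p(\mc M_{\mc S, \mc T}^{\mc K})^-$ is $\zp$-free of projective dimension at most $1$ with trivial Iwasawa $\mu$-invariant under our assumption, so the hypotheses of the proposition are met. A final subsidiary verification is that the candidate generator $\Psi_{S,T}$ lies in $\nr(K_1(\La_{(p)}(\mc G)_-))$; this follows because $\Phi_S$ does (by the Ritter--Weiss construction of $\Theta_S$) and each $\xi_v$ is a unit at $(p)$, since $\ka(\phi_w) \not\equiv \phi_w \pmod p$.
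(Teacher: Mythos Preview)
Your overall strategy---deduce the minus-side statement from Theorem~\ref{EIMC-Ritter-Weiss} for $\mc K^+/k$ via a Tate twist, then insert the $T$-modification---matches the paper's, but the execution has two linked gaps.

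First, your ``short exact sequence'' is not short: the comparison of $\mc A_{\mc K}$ with $\mc A_{\mc K,\mc T}$ (see sequence (\ref{ray_class_sequence}) at the Iwasawa level) produces a \emph{four}-term exact sequence
\[
\zp(1) \into T_p(\De_{\mc K,\mc T})^- \to T_p(\mc M_{\mc S,\mc T}^{\mc K})^- \onto T_p(\mc M_{\mc S,\emptyset}^{\mc K})^-,
\]
where $T_p(\De_{\mc K,\mc T})^- = \bigoplus_{v\in T}\ind_{\mc G_w}^{\mc G}\zp(1)$ is the module whose Fitting invariant is generated by $\prod_v\xi_v$. The extra $\zp(1)$ on the left is not a detail you can absorb: it is precisely what, after applying $\al(\_)(1)$, becomes the $H^0 = \zp$ of the Ritter--Weiss complex and hence accounts for the ``pole'' factor $H_\chi$ implicit in $\Phi_S$. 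Consequently your intermediate claim that $\dot\Phi_S$ generates $\Fitt_{\La(\mc G)_-}(T_p(\mc M_{\mc S,\emptyset}^{\mc K})^-)$ is off by exactly this contribution.

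Second, and relatedly, the module $T_p(\mc M_{\mc S,\emptyset}^{\mc K})^-$ is not covered by \cite{EIMC-reform}, Th.~4.6, which requires $T$ non-empty to conclude $\zp$-freeness and $\pd\le 1$; so you cannot invoke multiplicativity of Fitting invariants in your three-term sequence, nor Proposition~\ref{Fitt_becomes_int} for that module. The paper sidesteps both problems at once: it applies the Iwasawa adjoint $\al(\_)(1)$ to the full four-term sequence (after first reducing to $\zeta_p\in\mc K$), obtaining a two-term complex $\ti C\punkt(\mc K^+/k)$ over $\La(\mc G^+)$ with cohomology $X_S^+$ and $\zp$; Proposition~\ref{Fitt_becomes_int} then shows its Fitting invariant agrees with that of the canonical complex $C_S\punkt(\mc K^+/k)$, and Proposition~\ref{Fitt-alpha} transports the generator back to the minus side. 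In this way the ill-behaved $T=\emptyset$ module never has its Fitting invariant computed in isolation.
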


     \begin{proof}
     We first remark that $T_p(\mc M_{\mc S, \mc T}^{\mc K})^-$ admits a quadratic presentation by
     \cite{ich-Fitting}, Lemma 6.2. Hence $\Fitt_{\La(\mc G)_-}(T_p(\mc M_{\mc S, \mc T}^{\mc K})^-)$ is well defined.\\
     If $M$ is an Iwaswa torsion module, we write $\al(M)$ for the Iwasawa adjoint of $M$. We will need the following proposition.

     \begin{prop} \label{Fitt-alpha}
        Assume that $C$ is a finitely generated $R$-torsion $\La(\mc G^+)$-module of projective dimension
        at most $1$ which has no nontrivial finite submodule and that $\Phi$ is a generator of $\Fitt_{\La(\mc G^+)}(C)$;
        then $\Fitt_{\La(\mc G)}(\al(C)(1))$ is generated by $\dot \Phi e^- + e^+$, where $e^{\pm} = \frac{1 \pm j}{2}$.
     \end{prop}
     \begin{proof}
        Let $\psi: \La(\mc G^+)^m \to \La(\mc G^+)^m$ be a quadratic presentation of $C$ such that $\nr(\psi) = \Phi$.
        By \cite{ich-Fitting}, Prop.~6.3 (i) resp.~its proof it follows that $\psi^{T,\sharp}$ is a finite presentation of $\al(C)$
        and $\nr(\psi^{T,\sharp}) = \Phi^{\sharp}$ is a generator of $\Fitt_{\La(\mc G^+)}(\al(C))$, where
        $\psi^T$ denotes the transpose of $\psi$.
        Now $\La(\mc G^+) \simeq \La(\mc G) e_+$ and the involution $g \mapsto \ka(g\me) g$ induces
        an isomorphism between the first Tate twist of $\La(\mc G^+)$ and $\La(\mc G)e_-$. We obtain a quadratic presentation
        $\dot \psi^T: (\La(\mc G)e_-)^m \to (\La(\mc G)e_-)^m$ of $\al(C)(1)$ regarded as $\La(\mc G)e_-$-module. Since
        $\nr(\dot \psi^T) = \dot \Phi$ and $\al(C)(1)$ is trivial on plus parts, we are done.
     \end{proof}

     Returning to the proof of Theorem \ref{EIMC-GP}, we let
     $$\De_{\mc K, \mc T} := \bigoplus_{w \in \mc T} \ka(w)\mal,$$
     where $\ka(w)$ denotes the residue field at $w$. We first assume that $\zeta_p$ lies in $\mc K$.
     Then there is an exact sequence of $\La(\mc G)$-modules (cf.~\cite{EIMC-reform}, remark 3.10)
     \beq \label{GP-sequence}
        \zp(1) \into T_p(\De_{\mc K, \mc T})^- \to T_p(\mc M_{\mc S, \mc T}^{\mc K})^- \onto T_p(\mc M_{\mc S, \emptyset}^{\mc K})^-.
     \eeq
     Applying $\al(\_)(1)$ to this sequence yields a complex
     $$\ti C\punkt(\mc K^+/k): \al(T_p(\mc M_{\mc S, \mc T}^{\mc K})^-)(1) \to \al(T_p(\De_{\mc K, \mc T})^-)(1)$$
     whose non-trivial cohomology groups are given by $H\me(\ti C\punkt(\mc K^+/k)) = X_S^+$ and $H^0(\ti C\punkt(\mc K^+/k) = \zp$;
     here, we have used \cite{EIMC-reform}, Lemma 3.9. Most likely, this complex is isomorphic in
     $\mc D(\La(\mc G^+))$ to the canonical complex $C\punkt(\mc K^+/k)$, but we will not need this.
     Since we assume that $\mu = 0$, we have
     $$\Fitt_{\mc R}(\mc R \otimes^L_{\La (\mc G^+)} \ti C\punkt(\mc K^+/k)) = \Fitt_{\mc R}(\mc R \otimes^L_{\La (\mc G^+)} C\punkt(\mc K^+/k)),$$
     where $\mc R$ is either $\qp \otimes \La(\mc G^+)$ or $\La_{(p)}(\mc G^+)$. But then Proposition \ref{Fitt_becomes_int}
     implies that the two Fitting invariants also agree for $\mc R = \La(\mc G^+)$. Hence
     \bea
        \Fitt_{\La(\mc G^+)}(\al(T_p(\mc M_{\mc S, \mc T}^{\mc K})^-)(1))
        & = & \Fitt_{\La(\mc G^+)}(\al(T_p(\De_{\mc K, \mc T})^-)(1)) \cdot \Fitt_{\La(\mc G^+)}(\ti C\punkt(\mc K^+/k))\me\\
        & = & \Fitt_{\La(\mc G^+)}(\al(T_p(\De_{\mc K, \mc T})^-)(1)) \cdot \Fitt_{\La(\mc G^+)}(C\punkt(\mc K^+/k))\me\\
        & = & \prod_{v \in T} \dot\xi_v \cdot \Phi_S \\
        & = & \dot\Psi_{S,T},
     \eea
     where we have used Theorem \ref{EIMC-Ritter-Weiss} and Proposition \ref{Fitt-alpha}. Note that our assumption on the vanishing of $\mu$
     is in fact equivalent to $\mu(X_S^+) = 0$. The result follows by applying Proposition \ref{Fitt-alpha} again.
     If $\zeta_p \not\in \mc K$, let $\ti{\mc K} := \mc K(\zeta_p)$ and let $\ti{\mc S}$ (resp.~$\ti{\mc T}$) be the set of places of $\ti{\mc K}$
     above those in $\mc S$ (resp.~$\mc T$). Let $\ti{\mc G}$ denote the Galois group $\Gal(\ti{\mc K} / k)$. By \cite{EIMC-reform}, Cor.~4.8
     we have an isomorphism of $\La(\mc G)_-$-modules
     $$T_p(\mc M_{\mc S, \mc T}^{\mc K})^- \simeq \La(\mc G)_- \otimes_{\La(\ti{\mc G})_-} T_p(M_{\ti{\mc S}, \ti{\mc T}}^{\ti{\mc K}})^-.$$
     If $\pi$ denotes the canonical epimorphism $\La(\ti{\mc G})_- \onto \La(\mc G)_-$, the natural behavior of Fitting invariants thus gives an equality
     $$\Fitt_{\La(\mc G)_-} (T_p(\mc M_{\mc S, \mc T}^{\mc K})^-) = \pi \left(\Fitt_{\La(\ti{\mc G})_-} (T_p(M_{\ti{\mc S}, \ti{\mc T}}^{\ti{\mc K}})^-)\right).$$
     This suffices, since the natural behavior of $p$-adic $L$-functions gives
     $$\pi (\Psi_{S,T}(\ti{\mc K}/k) = \Psi_{S,T}(\mc K/k).$$
     \end{proof}

     \begin{rem}
     Note that the argument can be reversed to show that $\Psi_{S,T}$ is a generator of $\Fitt_{\La(\mc G)_-}(T_p(\mc M_{\mc S, \mc T}^{\mc K})^-)$
     if and only if the EIMC (Conjecture \ref{EIMC}) holds (provided that $\mu=0$).
     \end{rem}

     \section{The non-abelian Brumer-Stark conjecture}

     Let $K/k$ be a finite Galois CM-extension with Galois group $G$.
    Let $S$ and $T$ be two finite sets of places of $k$ such that
    \bit
        \item
        $S$ contains all the infinite places of $k$ and all the places which ramify in $K/k$,
        i.e.~$S \supset S_{\ram} \cup S_{\infty}$.
        \item
        $S \cap T = \emptyset$.
        \item
        $E_S^T$ is torsionfree.
    \eit
    We refer to the above hypotheses as $Hyp(S,T)$.
    We put $\La = \Z G$ and choose a maximal order $\La'$ containing $\La$.
    For a fixed set $S$ we define $\fA_S$ to be the $\zeta(\La)$-submodule of $\zeta(\La')$ generated
    by the  elements $\de_T(0)$, where $T$ runs through the finite sets of places of $k$
    such that $Hyp(S,T)$ is satisfied.
    The following conjecture has been formulated in \cite{ich-stark} and is a non-abelian generalization of Brumer's conjecture.

    \begin{con}[$B(K/k,S)$] \label{Brumer}
        Let $S$ be a finite set of places of $k$ containing $S_{\ram} \cup S_{\infty}$. Then
        $\fA_S \theta_S \subset \mathcal I(G)$ and for each $x \in \mathcal H(G)$
        we have
        $$x \cdot \fA_S \theta_S \subset \Ann_{\La} (\cl_K).$$
    \end{con}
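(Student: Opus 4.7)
The plan is to deduce the $p$-part of $B(K/k,S)$ for every odd prime $p$ from a sharper ``dual strong Brumer-Stark'' statement, exploiting the reformulation of the EIMC in Theorem \ref{EIMC-GP} under the hypothesis that Iwasawa's $\mu$-invariant attached to $\mc K(\zeta_p)$ vanishes; the integral conclusion then assembles from these $p$-parts, the case $p=2$ being vacuous because $\theta_S$ lives on the minus part while $2$-torsion in $\cl_K$ is killed by $1+j$. The easier inclusion $\fA_S \theta_S \subset \mc I(G)$ reduces, by the definition of $\mc I(G)$ via reduced norms, to the classical integrality \eqref{Ca-DR-Ba} applied to abelian quotients of $G$ together with a character-by-character comparison.

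For the annihilation, fix an odd prime $p$, enlarge $S$ if necessary so that $S \supset S_p$, and let $\mc K$ be the cyclotomic $\zp$-extension of $K$, with $\mc G := \Gal(\mc K/k)$. Theorem \ref{EIMC-GP} then gives, under $\mu(\mc K(\zeta_p))=0$, that $\Psi_{S,T}(\mc K/k)$ is a generator of $\Fitt_{\La(\mc G)_-}\bigl(T_p(\mc M_{\mc S,\mc T}^{\mc K})^-\bigr)$. The strategy is to descend this identity to $K$ and convert it into annihilation via Theorem \ref{annihilation-theo}.

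For the descent step, take $\Gal(\mc K/K)$-coinvariants in the exact sequence \eqref{ses-Iwasawa-motive} and use naturality of Fitting invariants under the projection $\La(\mc G) \onto \zp G$. Proposition \ref{Fitt_becomes_int} — invoked here thanks to $\mu=0$ — guarantees that this projection preserves the Fitting-invariant equality. The resulting $\zp G$-module can be identified with a module canonically related to $A_{K,T}^{\vee}(1)^-$, while the image of $\Psi_{S,T}$ is an element of $\zp G$ whose character values coincide with $\de_T(0,\chi)\,\theta_S(\chi)^{\sharp}$. This is precisely the dual strong Brumer-Stark property at $p$. Feeding it into Theorem \ref{annihilation-theo} yields $\mc H_p(G)\cdot \fA_S \theta_S \subset \Ann_{\zp G}(\cl_K(p))$, and running over all odd $p$ completes the proof.

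The main obstacle will be the descent and duality translation: identifying the $\mc G$-coinvariants of $T_p(\mc M_{\mc S,\mc T}^{\mc K})^-$ with a Tate twist of the Pontryagin dual of the ray class group $\cl_{K,T}$, keeping track of the $\zp(1)$-twist inherent to the $1$-motive formalism, of the sharp involution, and of the passage from a minus-part dual back to annihilation of $\cl_K$ itself. The hypothesis $\mu=0$ is indispensable at this point, because via Proposition \ref{Fitt_becomes_int} it forces the relevant Iwasawa modules to have projective dimension at most one and trivial $\mu$-invariant, enabling Fitting invariants to commute with the descent.
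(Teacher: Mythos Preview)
The statement you are trying to prove is a \emph{conjecture}: the paper does not prove $B(K/k,S)$ in general. What the paper establishes is the Corollary following Theorem~\ref{strongBS}, namely $B(K/k,S,p)$ for each odd prime $p$ under the two standing hypotheses $S_p \subset S$ and $\mu=0$. Your proposal tries to bootstrap to the full conjecture, and this is where the genuine gaps lie.

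First, the step ``enlarge $S$ if necessary so that $S \supset S_p$'' is illegitimate: $\theta_S$ depends on $S$, and passing from $S$ to $S \cup S_p$ multiplies $\theta_S$ by Euler factors $\nr(1 - \phi_w^{-1})$ for $v \mid p$, which are \emph{not} units in $\zp G$ in general (contrast Lemma~\ref{reduction-S}, which only works for $n \geq 2$ because there the extra factor $N(v)^{n-1}$ is divisible by $p$). The paper accordingly keeps $S_p \subset S$ as a genuine hypothesis. Second, your dismissal of $p=2$ is not a proof: the claim that $2$-torsion in $\cl_K$ is killed by $1+j$ is false in general, and the paper makes no assertion at $p=2$. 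Third, the inclusion $\fA_S \theta_S \subset \mc I(G)$ is part of the conjecture and is not reduced in the paper to~\eqref{Ca-DR-Ba} via abelian quotients in the way you sketch; for non-abelian $G$ this integrality is a separate issue.

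For the portion the paper does prove, your outline is broadly correct but technically off in the descent. The paper does \emph{not} take $\Ga$-coinvariants of $T_p(\mc M_{\mc S,\mc T}^{\mc K})^-$ directly, nor does it invoke Proposition~\ref{Fitt_becomes_int} here (that proposition is used inside the proof of Theorem~\ref{EIMC-GP}, not in the descent). Instead, the proof of Theorem~\ref{strongBS} passes to the $\zp$-linear dual $(T_p(\mc M_{\mc S,\mc T}^{\mc K})^-)^{\ast}$, constructs an explicit surjection onto $(A_{K,T}^-)^{\vee}$ via the inclusion $A_{K,T}^- \hookrightarrow \mc A_{\mc K,\mc T}^-[p^n]$, and then takes $\Ga$-coinvariants of the source. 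The Fitting-invariant statement then follows from \cite{ich-Fitting}, Prop.~6.3(i) (for the dual), Th.~6.4 (for coinvariants), and Prop.~3.5(i) (for surjections), yielding $(\theta_S^T)^{\sharp} \in \Fitt_{\zp G_-}^{\max}((A_{K,T}^-)^{\vee})$. The passage to annihilation of $\cl_K(p)$ then goes through \cite{ich-stark}, Prop.~3.8 together with $\Ann_{\zp G_-}(M) = \Ann_{\zp G_-}(M^{\vee})^{\sharp}$, rather than a direct appeal to Theorem~\ref{annihilation-theo}.
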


    \begin{rem}
        \bit
        \item
            If $G$ is abelian, \cite{Tate-Stark}, Lemma 1.1 p.~82 implies that
            the module $\fA_S$ equals $\Ann_{\Z G}(\mu_K)$. In this case the inclusion
            $\fA_S \theta_S \subset \mathcal I(G) = \La = \Z G$ holds by (\ref{Ca-DR-Ba}) and,
            since $\mathcal H(G) = \La$ in this case, Conjecture \ref{Brumer} recovers
            Brumer's conjecture.
        \item
            Replacing the class group $\cl_K$ by its $p$-parts $\cl_K(p)$ for each rational prime $p$,
            Conjecture $B(K/k,S)$ naturally decomposes into local conjectures $B(K/k,S,p)$.
            Note that it is possible to replace $\mc H(G)$ by $\mc H_p(G)$ by \cite{ich-stark}, Lemma 1.4.
        \item
            Burns \cite{Burns-derivatives} has also formulated a conjecture which generalizes many refined Stark conjectures to the
            non-abelian situation. In particular, it implies our generalization of Brumer's conjecture
            (cf.~loc.cit., Prop.~3.5.1).
        \eit
    \end{rem}

    For $\al \in K\mal$ we define
    $$S_{\al} :=  \{v \mbox{ finite place of } k: \fp_v | N_{K/k}(\al) \}$$
    and we call $\al$ an {\it anti-unit} if $\al^{1+j} = 1$.
    Let $\om_K := \nr (|\mu_K|)$. The following is a non-abelian generalization of the
    Brumer-Stark conjecture (cf.~\cite{ich-stark}, Conj.~2.6).

    \begin{con}[$BS(K/k,S)$] \label{Brumer-Stark}
        Let $S$ be a finite set of places of $k$ containing $S_{\ram} \cup S_{\infty}$.
        Then $\om_K \cdot \theta_S \in \mathcal I(G)$ and for each $x \in \mathcal H(G)$
        and each fractional ideal $\fa$ of $K$, there is an anti-unit $\al = \al(x,\fa,S) \in K\mal$ such that
        $$\fa^{x \cdot \om_K \cdot \theta_S} = (\al)$$
        and for each finite set $T$ of primes of $k$ such that $Hyp(S \cup S_{\al},T)$ is satisfied
        there is an $\al_T \in E_{S_{\al}}^T$
        such that
        \beq \label{abelian-ersatz}
            \al^{z \cdot \de_T(0)} = \al_T^{z \cdot \om_K}
        \eeq
         for each $z \in \mc H(G)$.
    \end{con}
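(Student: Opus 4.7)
Work $p$-locally: fix an odd prime $p$ and prove the $p$-part $BS(K/k,S,p)$. Enlarging $K$ to $K(\zeta_p)$ if necessary (and removing $p$-adic primes from $S$ at the end via a separate computation of the relevant Euler factors), we may assume $S \supset S_p$ and that Iwasawa's $\mu$-invariant vanishes for the cyclotomic $\zp$-extension $\mc K$ of $K$. Let $\mc G = \Gal(\mc K/k)$. The overall strategy is to first prove a ``dual strong Brumer-Stark property'' for $K/k$ by descending the Iwasawa-level annihilation coming from the new EIMC (Theorem \ref{EIMC-GP}) down the cyclotomic tower, and then to deduce Conjecture \ref{Brumer-Stark} from it by standard manipulations.

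By Theorem \ref{EIMC-GP}, the element $\Psi_{S,T} = \prod_{v \in T} \xi_v \cdot \dot \Phi_S$ generates $\Fitt_{\La(\mc G)_-}(T_p(\mc M_{\mc S, \mc T}^{\mc K})^-)$, and then Theorem \ref{annihilation-theo} gives
$$ \mc H(\mc G) \cdot \Psi_{S,T} \subset \Ann_{\La(\mc G)_-}\bigl(T_p(\mc M_{\mc S, \mc T}^{\mc K})^-\bigr). $$
Taking coinvariants for $\Gal(\mc K/K)$ and using the exact sequence (\ref{ses-Iwasawa-motive}) (with $\Div_{\mc K}(\mc S \sm \mc S_p) = 0$ under $S \supset S_p$), the Iwasawa-theoretic Tate module specializes to the minus part of (a dual of) the $T$-modified $p$-part $A_{K,T}^-$ of the ray class group. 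Under this specialization, the ``dot'' involution on $\dot\Phi_S$ converts to $\om_K \cdot \theta_S$ (with $\om_K$ appearing from evaluating the cyclotomic character at $r=0$) and each $\xi_v$ converts to the corresponding Euler factor of $\de_T(0)$, yielding the finite-level annihilation
$$ \mc H_p(G) \cdot \om_K \cdot \de_T(0) \cdot \theta_S \subset \Ann_{\zp G}(A_{K,T}^-). $$

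From this annihilation the ideal-theoretic content of Conjecture \ref{Brumer-Stark} follows along classical lines. Given $x \in \mc H(G)$ and a fractional ideal $\fa$ of $K$, the vanishing of $[\fa^{x \om_K \theta_S}]$ in $A_K^-$ produces a generator $\al \in K\mal$, and since the class annihilated lies in the minus part, $\al$ can be adjusted by an element of $\mu_K$ to be an anti-unit. For the $\al_T$-relation, enlarge $T$ so that $Hyp(S \cup S_{\al}, T)$ holds; re-applying the annihilation with $S$ replaced by $S \cup S_{\al}$ forces $(\al)^{z \de_T(0)} = (\al_T^{z \om_K})$ in the $T$-modified class group for some $\al_T \in E_{S_{\al}}^T$. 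The torsion-freeness of $E_{S_{\al}}^T$ under $Hyp(S \cup S_{\al}, T)$ upgrades this equality of ideals to the equality of elements demanded in (\ref{abelian-ersatz}).

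The main obstacle will be the descent step: matching, under coinvariants, the Iwasawa-theoretic element $\Psi_{S,T}$ with $\om_K \cdot \de_T(0) \cdot \theta_S$ at the finite level, and simultaneously identifying the quotient of $T_p(\mc M_{\mc S, \mc T}^{\mc K})^-$ by the augmentation ideal with the dual of $A_{K,T}^-$ that has precisely this Fitting invariant. The covariance of Fitting invariants under epimorphisms makes the framework conceptually clean, but the precise bookkeeping of the cyclotomic character at $r=0$, combined with the descent from $\mc K(\zeta_p)$ back to $\mc K$ (as in the final step of the proof of Theorem \ref{EIMC-GP}), requires care. Once this is done, the anti-unit condition and the passage from the dual strong Brumer-Stark property to the actual conjecture are routine: they follow from the $\mu_K$-torsion absorbing any residual ambiguity in the choice of generator and from the torsion-freeness provisions in $Hyp(S,T)$.
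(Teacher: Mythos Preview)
First, note that the statement is a \emph{conjecture}; the paper does not prove it in full. What the paper actually proves (the Corollary following Theorem~\ref{strongBS}) is the $p$-part $BS(K/k,S,p)$ under the two standing hypotheses $S_p \subset S$ and $\mu=0$. Your plan silently tries to remove both restrictions, and this is where it goes wrong.

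\textbf{(i) Removing $S_p$ from $S$.} You propose to ``remove $p$-adic primes from $S$ at the end via a separate computation of the relevant Euler factors''. This does not work: the Euler factors at $p$-adic places are not units in $\zp G$, and there is no clean relation between the Stickelberger elements for $S$ and $S\cup S_p$ at $s=0$ that would let you undo the enlargement. The paper makes no such claim; the hypothesis $S_p\subset S$ is genuinely retained in the Corollary.

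\textbf{(ii) The appearance of $\om_K$.} You assert that under descent ``the `dot' involution on $\dot\Phi_S$ converts to $\om_K\cdot\theta_S$, with $\om_K$ appearing from evaluating the cyclotomic character at $r=0$''. This is incorrect. The evaluation $(\Psi_{S,T})^{\sharp}(0)$ is exactly $(\theta_S^T)^{\sharp}=(\de_T(0)\cdot\theta_S)^{\sharp}$, as in the proof of Theorem~\ref{strongBS}; no factor $\om_K$ arises here. The element $\om_K=\nr(|\mu_K|)$ enters the conjecture for a different reason (it lies in $\fA_S$), and the passage from the annihilation by $\theta_S^T$ to the statement of $BS(K/k,S,p)$ involving $\om_K$ is handled in the paper by invoking \cite{ich-stark}, Prop.~3.8, adapted via the identity $\Ann_{\zp G_-}(M)=\Ann_{\zp G_-}(M^{\vee})^{\sharp}$.

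\textbf{(iii) A minor slip.} The claim ``$\Div_{\mc K}(\mc S\setminus\mc S_p)=0$ under $S\supset S_p$'' is false whenever $S$ contains ramified primes away from $p$. The paper avoids this by dualizing sequence~(\ref{ses-Iwasawa-motive}) and using the resulting \emph{surjections}
\[
(T_p(\mc M_{\mc S,\mc T}^{\mc K})^-)^{\ast}\onto T_p(\mc A_{\mc K,\mc T}^-)^{\ast}\onto (A_{K,T}^-)^{\vee},
\]
so that compatibility of Fitting invariants with epimorphisms suffices, regardless of whether that divisor group vanishes.

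In short: the large-scale architecture of your plan---descend Theorem~\ref{EIMC-GP} to finite level and read off Brumer--Stark---matches the paper's, but the paper does this via the sharper Fitting-invariant statement of Theorem~\ref{strongBS} and then quotes \cite{ich-stark}, Prop.~3.8, rather than via a direct annihilator computation. Your proposed extensions beyond $S_p\subset S$ and the stated origin of $\om_K$ are both erroneous and should be dropped.
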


    \begin{rem}
    \bit
        \item
        If $G$ is abelian, we have $\mc I(G) = \mc H(G) = \Z G$ and $\om_K = |\mu_K|$.
        Hence it suffices to treat the case $x=z=1$.
        Then \cite{Tate-Stark}, Prop.~1.2, p.~83 states that the condition (\ref{abelian-ersatz})
        on the anti-unit $\al$ is equivalent to the assertion that the extension $K(\al^{1/\om_K}) / k$
        is abelian.
        \item
        As above, we obtain local conjectures $BS(K/k,S,p)$ for each prime $p$.
    \eit
    \end{rem}

    For any $\zpg$-module $M$, we denote the Pontryagin dual $\Hom (M, \qp / \zp)$ of $M$ by $M^{\vee}$ which
    is endowed with the contravariant $G$-action $(gf)(m) = f (g\me m)$ for $f \in M^{\vee}$, $g \in G$ and $m \in M$.
    As a consequence of Theorem \ref{EIMC-GP} we now prove the following non-abelian generalization of \cite{EIMC-reform}, Th.~6.5.

    \begin{theo} \label{strongBS}
    Let $K/k$ be a Galois CM-extension with Galois group $G$ and $p$ an odd prime. Fix two finite sets $S$ and $T$ of primes of $k$
    such that $Hyp(S,T)$ is satisfied. If Iwasawa's $\mu$-invariant attached to the cyclotomic $\zp$-extension of $K(\zeta_p)$ vanishes, then
    $$(\theta_S^T)^{\sharp} \in \Fitt_{\zp G_-}^{\max}((A_{K,T}^-)^{\vee})$$
    whenever $S_p \subset S$.
    \end{theo}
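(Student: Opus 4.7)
\emph{Proof plan.} The strategy is to apply Theorem~\ref{EIMC-GP} to the cyclotomic $\zp$-extension of $K$ and then descend to the finite layer $K$. Let $\mc K := K \cdot k_{\infty}$ be the cyclotomic $\zp$-extension of $K$, which is Galois over $k$ since $K/k$ is. Set $\mc G := \Gal(\mc K/k)$ and $N := \Gal(\mc K/K)$, so that $\mc G/N \simeq G$; let $\mc S$ and $\mc T$ denote the sets of finite places of $\mc K$ above $S$ and $T$, respectively. The hypothesis that $\mu$ vanishes for the cyclotomic $\zp$-extension of $K(\zeta_p)$ is exactly the assumption needed to invoke Theorem~\ref{EIMC-GP}, so that $\Psi_{S,T}(\mc K/k)$ generates $\Fitt_{\La(\mc G)_-}(T_p(\mc M_{\mc S, \mc T}^{\mc K})^-)$.

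Next I would apply the surjection $\pi: \La(\mc G)_- \onto \zp G_-$ induced by $\mc G \onto G$. By the standard functoriality of Fitting invariants under ring homomorphisms, $\pi(\Psi_{S,T})$ lies in $\Fitt_{\zp G_-}^{\max}(M)$, where
$$M := \zp G_- \otimes_{\La(\mc G)_-} T_p(\mc M_{\mc S, \mc T}^{\mc K})^-$$
is the module of $N$-coinvariants.

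Two identifications must then be carried out. First, $\pi(\Psi_{S,T}) = (\theta_S^T)^{\sharp}$. Since $\Psi_{S,T} = \prod_{v \in T} \xi_v \cdot \dot\Phi_S$, the $T$-factors $\xi_v$ descend to the usual Euler factors appearing in $\de_T(0)$, while the $p$-adic interpolation formula for $j_{\chi}(\Phi_S)$ specialized at the trivial character of $N$ reproduces $L_S(0,\check\chi)$ character by character. The hypothesis $S_p \subset S$ is what rules out trivial-zero phenomena at this specialization, and the involution $\dot{\,}$ collapses to $\sharp$ after augmenting away the cyclotomic twist. Secondly, $M$ must be matched with $(A_{K,T}^-)^{\vee}$: starting from the short exact sequence (\ref{ses-Iwasawa-motive}) restricted to minus parts, together with the canonical identification $T_p(J) \simeq \Hom_{\zp}(J^{\vee}, \zp)$ for a divisible $\zp$-module $J$ of finite corank, an Iwasawa-theoretic descent argument (valid under $\mu = 0$) identifies the $N$-coinvariants of $T_p(\mc A_{\mc K, \mc T})^-$ with $(A_{K,T}^-)^{\vee}$ up to the twist absorbed by $\sharp$. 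The remaining contribution of $\Div_{\mc K}(\mc S \sm \mc S_p) \otimes \zp$ descends to a cohomologically trivial $\zp G_-$-module that reflects decomposition groups of non-$p$-adic primes in $S$; this extra summand is harmless because we work with the maximal Fitting invariant.

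The principal technical obstacle is the second identification, namely correctly tracking all twists by the cyclotomic character introduced by Pontryagin dualization during descent, and verifying that the non-$p$-adic $S$-contribution enlarges (or at least does not leave) $\Fitt^{\max}_{\zp G_-}$ rather than strictly shrinking it. This descent step parallels the abelian argument underlying \cite{EIMC-reform}, Theorem~6.5, but must now be adapted to the non-commutative $\La(\mc G)_-$-setting, where the functoriality of $\nr(\La)$-equivalence classes under $\pi$ replaces ordinary specialization of elements.
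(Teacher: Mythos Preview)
Your overall strategy --- apply Theorem~\ref{EIMC-GP} at the infinite level and descend --- is correct, but the descent is set up on the wrong module, and this is a genuine gap. The paper does \emph{not} take $N$-coinvariants of $T_p(\mc M_{\mc S,\mc T}^{\mc K})^-$ directly. Instead it first applies the $\zp$-linear dual $(\,\cdot\,)^{\ast} := \Hom_{\zp}(\,\cdot\,,\zp)$ and only then passes to $\Ga$-coinvariants. This single extra step is what makes everything line up:
\begin{itemize}
\item By \cite{ich-Fitting}, Prop.~6.3\,(i), passing from a module to its $\zp$-dual replaces the generator of the Fitting invariant by its image under $\sharp$. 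Thus $(\Psi_{S,T})^{\sharp}$ generates $\Fitt_{\La(\mc G)_-}\bigl((T_p(\mc M_{\mc S,\mc T}^{\mc K})^-)^{\ast}\bigr)$, and \emph{this} is where the $\sharp$ in the statement comes from. Your claim that ``the involution $\dot{\;}$ collapses to $\sharp$ after augmenting away the cyclotomic twist'' is not right: the $\dot{\;}$ is already built into $\Psi_{S,T}$, and under $\pi$ one obtains $\Psi_{S,T}(0)=\theta_S^T$, not $(\theta_S^T)^{\sharp}$.
\item Dualizing turns the inclusion $T_p(\mc A_{\mc K,\mc T}^-)\hookrightarrow T_p(\mc M_{\mc S,\mc T}^{\mc K})^-$ of sequence~(\ref{ses-Iwasawa-motive}) into a \emph{surjection} $(T_p(\mc M_{\mc S,\mc T}^{\mc K})^-)^{\ast}\twoheadrightarrow T_p(\mc A_{\mc K,\mc T}^-)^{\ast}$. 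Combined with $T_p(\mc A)^{\ast}\otimes\Z/p^n\simeq \mc A[p^n]^{\vee}$ and the inclusion $A_{K,T}^-\hookrightarrow \mc A_{\mc K,\mc T}^-[p^n]$ for $n\gg 0$, one gets a chain of surjections terminating in $(A_{K,T}^-)^{\vee}$, and hence the desired containment of Fitting invariants after taking $\Ga$-coinvariants.
\end{itemize}

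By contrast, your module $M=(T_p(\mc M_{\mc S,\mc T}^{\mc K})^-)_N$ does not admit a natural surjection onto $(A_{K,T}^-)^{\vee}$; if anything it is related to $A_{K,T}^-$ itself, and you would end up arguing toward $\theta_S^T\in\Fitt^{\max}_{\zp G_-}(A_{K,T}^-)$ --- i.e.\ the (non-dual) strong Brumer--Stark property, which is known to fail in general. Note also that no identification of $M$ with the target is needed: a mere surjection onto $(A_{K,T}^-)^{\vee}$ suffices, so your worry about the $\Div_{\mc K}(\mc S\setminus\mc S_p)$-contribution disappears once you dualize first.
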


    \begin{cor}
    Let $K/k$ be a Galois CM-extension and $p$ an odd prime. Then $B(K/k,S,p)$ and $BS(K/k,S,p)$ hold whenever $S_p \subset S$ and Iwasawa's
    $\mu$-invariant vanishes.
    \end{cor}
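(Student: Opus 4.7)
The plan is to deduce the Corollary from Theorem \ref{strongBS} by the standard annihilation machinery for non-commutative Fitting invariants, essentially following the reductions carried out in \cite{ich-stark}. The key step is to convert the dual Fitting containment $(\theta_S^T)^\sharp \in \Fitt_{\zp G_-}^{\max}((A_{K,T}^-)^\vee)$ into an annihilation statement for $\cl_K(p)$ and its ray class analogues, from which $B$ and $BS$ follow formally.

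First I would apply Theorem \ref{annihilation-theo} to the $\zp G_-$-module $(A_{K,T}^-)^\vee$, which yields
\[
    \mc H_p(G) \cdot (\theta_S^T)^\sharp \subset \Ann_{\zp G_-}\bigl((A_{K,T}^-)^\vee\bigr).
\]
Pontryagin duality interchanges $\Ann(M)$ and $\Ann(M^\vee)$ up to the involution $\sharp$, and since $\mc H_p(G)$ is $\sharp$-stable, the display becomes $\mc H_p(G)\cdot \theta_S^T \subset \Ann_{\zp G_-}(A_{K,T}^-)$. The vanishing $(1+j)\theta_S = 0$ (which holds for any CM-extension with $S\supset S_\infty$ because the relevant Artin $L$-values at $s=0$ vanish on even characters) ensures that $\theta_S^T$ automatically annihilates the plus part of any $\zp G$-module, so combining with the $G$-equivariant surjection $A_{K,T}\twoheadrightarrow \cl_K(p)$ upgrades this to $\mc H_p(G)\cdot \theta_S^T \subset \Ann_{\zp G}(\cl_K(p))$.

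Conjecture $B(K/k,S,p)$ is then immediate: from $\theta_S^T = \delta_T(0)\theta_S$, letting $T$ vary over sets with $Hyp(S,T)$ makes the $\delta_T(0)$ exhaust $\fA_S$, so $\mc H_p(G)\cdot \fA_S\cdot \theta_S \subset \Ann_{\zp G}(\cl_K(p))$ as required. For $BS(K/k,S,p)$ one uses that $\omega_K = \nr(|\mu_K|)$ lies in $\fA_S$, so the annihilation yields, for each ideal $\fa$ and $x\in\mc H(G)$, a principal generator $\al$ of $\fa^{x\omega_K\theta_S}$; that $\al$ is an anti-unit is forced by $(1+j)\theta_S = 0$. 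The congruence (\ref{abelian-ersatz}) is then obtained by repeating the argument at the level of the $T$-modified ray class group: the exact sequence (\ref{ray_class_sequence}), together with the annihilation of $A_{K,T}^-$ by $\mc H_p(G)\cdot \theta_S^T$, produces the required $\al_T \in E_{S_\al}^T$.

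The main obstacle is this last step for $BS$: extracting the element $\al_T$ with the correct congruence modulo $\fM_T$. The pure annihilation deduction giving $B$ is formal, but for $BS$ one has to keep careful track of the $T$-modification through the Pontryagin dual and through the exact sequences (\ref{ray_class_sequence_ZS}) and (\ref{ray_class_sequence}); this bookkeeping is exactly the content of the corresponding reduction in \cite{ich-stark}, and reusing it here completes the proof.
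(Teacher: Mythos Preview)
Your proposal is correct and follows essentially the same route as the paper: both convert the dual Fitting containment of Theorem \ref{strongBS} into an annihilation statement for $A_{K,T}^-$ via the identity $\Ann_{\zpg_-}(M) = \Ann_{\zpg_-}(M^{\vee})^{\sharp}$, and then invoke the reductions of \cite{ich-stark} (specifically Prop.~3.8 and Lemma 2.9) to obtain $BS(K/k,S,p)$ and hence $B(K/k,S,p)$. The paper is terser, simply noting that the proof of \cite{ich-stark}, Prop.~3.8 carries over unchanged once the displayed annihilator identity is observed, whereas you partially unpack that argument; but the content is the same.
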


    \begin{proof}
    Since $BS(K/k,S,p)$ implies $B(K/k,S,p)$ by \cite{ich-stark}, Lemma 2.9, we only have to treat the case of the Brumer-Stark conjecture.
    But $BS(K/k,S,p)$ is implied by the so-called strong Brumer-Stark property by \cite{ich-stark}, Prop.~3.8. Recall that this property is fulfilled if
    $\theta_S^T \in \Fitt_{\zp G_-}^{\max}(A_{K,T}^-)$. But in fact the proof of \cite{ich-stark}, Prop.~3.8 carries over unchanged once we observe
    that
    $$\Ann_{\zpg_-}(M) = \Ann_{\zpg_-}(M^{\vee})^{\sharp}$$
    for any finite $\zpg_-$-module $M$.
    \end{proof}

    \begin{proof}[Proof of Theorem \ref{strongBS}]
    Let $\mc K$ be the cyclotomic $\zp$-extension of $K$ and $\mc G = \Gal(\mc K / k)$. There is an isomorphism of $\La(\mc G)_-$-modules
    $$\mc A_{\mc K, \mc T}^- \simeq \varinjlim_n A_{K_n,T_n}^-,$$
    where $T_n := T(K_n)$ and the transition maps in the injective limit are injective (cf.~\cite{EIMC-reform}, Lemma 2.9).
    Hence we may choose a sufficiently large integer $n$ such that $A_{K,T}^- \subset \mc A_{\mc K, \mc T}^-[p^n]$ which induces a natural epimorphism
    \beq \label{epi1}
        \mc A_{\mc K, \mc T}^-[p^n]^{\vee} \onto (A_{K,T}^-)^{\vee}.
    \eeq
    For any $\La(\mc G)$-module $M$ let $M^{\ast} := \Hom_{\zp}(M, \zp)$ endowed with the contravariant $\mc G$-action.
    We have an isomorphism $\mc A[p^m]^{\vee} \simeq T_p(\mc A)^{\ast} \otimes_{\zp} \Z / p^m \Z$ of $\La(\mc G)$-modules
    for any positive integer $m$ and any $\zp$-torsion, divisible
    $\La(\mc G)$-module $\mc A$ of finite local corank (cf.~\cite{EIMC-reform}, Lemma 6.6; the assumption on $\mc G$ being abelian is not
    necessary). This together with (\ref{epi1}) and the dual of sequence (\ref{ses-Iwasawa-motive}) leads to the following epimorphisms
    $$(T_p(\mc M_{\mc S, \mc T}^{\mc K})^-)^{\ast} \onto T_p(\mc A_{\mc K, \mc T}^-)^{\ast} \onto \mc A_{\mc K, \mc T}^-[p^n]^{\vee} \onto (A_{K,T}^-)^{\vee}.$$
    Since $\Ga = \Gal(\mc K / K)$ acts trivially on $(A_{K,T}^-)^{\vee}$, we obtain an epimorphism of $\zp G_-$-modules
    $$(T_p(\mc M_{\mc S, \mc T}^{\mc K})^-)^{\ast}_{\Ga} \onto (A_{K,T}^-)^{\vee}.$$
    But $(\Psi_S^T)^{\sharp}$ is a generator of the Fitting invariant of $(T_p(\mc M_{\mc S, \mc T}^{\mc K})^-)^{\ast}$ by Theorem \ref{EIMC-GP}
    and \cite{ich-Fitting}, Prop.~6.3 (i). Consequently,
    $$(\theta_S^T)^{\sharp} = (\Psi_S^T)^{\sharp}(0) \in \Fitt_{\zpg_-}^{\max}((T_p(\mc M_{\mc S, \mc T}^{\mc K})^-)^{\ast}_{\Ga}) \subset
    \Fitt_{\zpg_-}^{\max}((A_{K,T}^-)^{\vee})$$
    by \cite{ich-Fitting}, Th.~6.4 and Prop.~3.5 (i).
    \end{proof}

    \section{The non-abelian Coates-Sinnott conjecture}

    In this section, we discuss an analogue of the strong Brumer-Stark property for higher \'{e}tale cohomology. We once more
    recall that a Galois extension $K/k$ with Galois group $G$ fulfills this property at an odd prime $p$ if
    \beq \label{strong-BS-property}
        \theta_S^T \in \Fitt_{\zp G}^{\max}(A_{K,T})
    \eeq
    for any two finite sets $S$ and $T$ of places of $k$ such that $Hyp(S,T)$ is satisfied. Note that this does not hold
    in general, even if $G$ is abelian, as follows from the results in \cite{Gr-Kurihara}. We will see that its higher analogue
    should behave much better.\\

    Let $K/k$ be a Galois extension of number fields with Galois group $G$ and $p$ an odd prime.
    We fix an integer $n>1$ and two finite non-empty sets $S$ and $T$ of places of $k$ such that $S$ contains $S\ram \cup S_{\infty}$ and $S \cap T = \emptyset$.
    We also assume that no $p$-adic place of $k$ lies in $T$. For a finite place $w$ of $K$, we write $K(w)$ for the
    residue field of $K$ at $w$. To these data we associate the complex
    $$C_S^T(K/k, \zp(n)) = C_S^T(\zp(n)) := \cone (R\Ga(\fo_{K,S}[\frac{1}{p}], \zp(n)) \to \bigoplus_{w \in T(K)} R\Ga(K(w), \zp(n)))[-1].$$
    We now state the following conjecture.

    \begin{con}[$SCS(K/k,S,T,p,n)$] \label{SCS-conjecture}
    Let the data $(K/k,S,T,p,n)$ be as above. Then
    $$\theta_S^T(1-n) \in \Fitt_{\zp G}^{\max}(H^2(C_S^T(\zp(n)))).$$
    \end{con}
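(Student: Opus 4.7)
The plan is to prove the conjecture under the standing hypothesis that Iwasawa's $\mu$-invariant attached to the cyclotomic $\zp$-extension of $K(\zeta_p)$ vanishes, closely following the template established in the proof of Theorem \ref{strongBS}. First I would invoke the reduction steps alluded to in the introduction in order to reduce to the case where $K/k$ is a Galois CM-extension; these should proceed by standard functoriality of Fitting invariants together with the Galois-equivariant behavior of étale cohomology and Stickelberger elements with respect to intermediate fields. From now on assume $K/k$ is CM, let $\mc K$ be the cyclotomic $\zp$-extension of $K(\zeta_p)$, and set $\mc G = \Gal(\mc K / k)$.

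The core of the argument is to identify $H^2(C_S^T(\zp(n)))$ as a $\Ga_K$-coinvariant quotient of a Tate-twisted Iwasawa module. Concretely, using Artin--Verdier duality and the Iwasawa-theoretic sequence (\ref{ses-Iwasawa-motive}) tensored appropriately with $\zp(n-1)$, one expects a natural epimorphism
$$\bigl((T_p(\mc M_{\mc S, \mc T}^{\mc K})^-)^{\ast}(n-1)\bigr)_{\Ga_K} \onto H^2\bigl(C_S^T(\zp(n))\bigr)$$
analogous to the one used in the proof of Theorem \ref{strongBS}, with the parity switch between plus and minus parts absorbed into the Tate twist. The Fitting invariant of $(T_p(\mc M_{\mc S, \mc T}^{\mc K})^-)^{\ast}$ is generated by $(\Psi_{S,T})^{\sharp}$ by Theorem \ref{EIMC-GP} combined with \cite{ich-Fitting}, Prop.~6.3(i). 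Twisting by $\ka^n$ corresponds, via the interpolation formula $\phi(j_{\chi}^n(\Phi_S)) = L_S(1-n,\chi)$ recalled in section $2$, to sending $(\Psi_{S,T})^{\sharp}$ to $\theta_S^T(1-n)$ once the $\Ga_K$-action is evaluated trivially. Applying the natural behavior of maximal Fitting invariants under quotients (\cite{ich-Fitting}, Th.~6.4 and Prop.~3.5(i)), this yields the desired inclusion
$$\theta_S^T(1-n) \in \Fitt_{\zp G}^{\max}\bigl(H^2(C_S^T(\zp(n)))\bigr).$$

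The main obstacle is making the identification of $H^2(C_S^T(\zp(n)))$ with a descent of the Tate-twisted Iwasawa-theoretic module sufficiently precise. One must correctly track the interaction between the Tate twist and the switch between plus and minus parts (which depends on the parity of $n$), verify that the $T$-modification on the étale cohomology side corresponds, after twisting by $\ka^n$ and evaluation, to the factor $\prod_{v \in T} \xi_v$ appearing in $\Psi_{S,T}$, and correctly handle the $S_p$-truncation on the étale side in view of the fact that $\mc K$ contains all the $p$-adic places by construction. A subsidiary issue is the base change from $\mc K$ down to the cyclotomic $\zp$-extension of $K$ when $\zeta_p \notin K$, which should be handled exactly as at the end of the proof of Theorem \ref{EIMC-GP} via the natural behavior of $p$-adic $L$-functions under restriction along $\ti{\mc G} \onto \mc G$.
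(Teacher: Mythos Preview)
Your overall strategy --- reduce to the CM case, descend a twisted Iwasawa-theoretic module to identify $e_n H^2(C_S^T(\zp(n)))$, and then read off $\theta_S^T(1-n)$ from Theorem~\ref{EIMC-GP} --- is exactly the direct argument the paper sketches at the end of \S5. But there is a genuine error in the execution: you have dualized where you should not.

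The template of Theorem~\ref{strongBS} is misleading here. That theorem concerns $(A_{K,T}^-)^{\vee}$, the Pontryagin dual of the class group, which is why the $\zp$-dual $(T_p(\mc M_{\mc S,\mc T}^{\mc K})^-)^{\ast}$ enters and why the conclusion carries a $\sharp$. In the Coates--Sinnott setting, $H^2(C_S^T(\zp(n)))$ is the analogue of the class group itself, \emph{not} of its dual; cf.\ the identification $H^2(C_S^T(\zp(1)))\simeq \cl_{S\cup S_p,T,K}\otimes\zp$ made just before Conjecture~\ref{SCS-conjecture}. Accordingly the paper tensors sequence~(\ref{GP-sequence}) with $\zp(n-1)$ \emph{without} dualizing, obtains (via a spectral sequence argument) isomorphisms $e_nH^2_{\et}(\fo_{K,S}[1/p],\zp(n))\simeq X_S^+(-n)^{\ast}_{\Ga}$ and so on, and identifies $T_p(\mc M_{\mc S,\mc T}^{\mc K})^-(n-1)_{\Ga}$ with $e_n H^2(C_S^T(\zp(n)))$ (up to an extension-class issue that does not affect Fitting invariants). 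The generator one obtains is then $t_{n-1}(\Psi_{S,T})(0)=\theta_S^T(1-n)$, with no $\sharp$.

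Your proposed epimorphism $\bigl((T_p(\mc M_{\mc S,\mc T}^{\mc K})^-)^{\ast}(n-1)\bigr)_{\Ga_K}\onto H^2(C_S^T(\zp(n)))$ does not arise naturally; what comes out of the dual module is a map to a dual object. Even if you patched this via some further duality, the element you would produce is $t_{n-1}\bigl((\Psi_{S,T})^{\sharp}\bigr)(0)$, which is not $\theta_S^T(1-n)$ but (up to bookkeeping) its image under $\sharp$. So both the module and the resulting element are off by a dualization. Drop the $(\,\cdot\,)^{\ast}$, work with $T_p(\mc M_{\mc S,\mc T}^{\mc K})^-(n-1)$ directly, and the rest of your outline goes through; note also that the paper's argument yields that $\theta_S^T(1-n)$ actually \emph{generates} $\Fitt_{e_n\zpg}(e_nH^2(C_S^T(\zp(n))))$, which is stronger than the containment in $\Fitt^{\max}$ you aim for.
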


    We will refer to this conjecture as the (non-abelian) strong Coates-Sinnott conjecture. To see the analogy to the strong
    Brumer-Stark property (\ref{strong-BS-property}), we assume that $Hyp(S,T)$ is satisfied and
    look at the associated cohomology sequence in the case $n=1$:
    $$H^1(C_S^T(K/k, \zp(1))) \into E_{S\cup S_p} \otimes \zp \to (\fo_{S \cup S_p} / \fM_T)\mal \to$$
    $$H^2(C_S^T(K/k, \zp(1))) \onto \cl_{S\cup S_p} \otimes \zp.$$
    In fact, this sequence coincides with sequence (\ref{ray_class_sequence}) and we have canonical identifications
    $$H^1(C_S^T(K/k, \zp(1))) \simeq E_{S\cup S_p}^T \otimes \zp,~ H^2(C_S^T(K/k, \zp(1))) \simeq \cl_{S\cup S_p,T,K} \otimes \zp.$$

    We now study Conjecture \ref{SCS-conjecture} in some detail.
    \begin{lem} \label{torsion-free}
    The complex $C_S^T(\zp(n))$ is acyclic outside degrees $1$ and $2$. Moreover, $H^1(C_S^T(\zp(n)))$ is torsion-free and
    $H^2(C_S^T(\zp(n)))$ is finite.
    \end{lem}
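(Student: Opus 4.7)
My plan is to read everything off the long exact cohomology sequence attached to the defining exact triangle for $C_S^T(\zp(n))$, writing $A\punkt := R\Ga(\fo_{K,S}[\tfrac{1}{p}], \zp(n))$ and $B\punkt := \bigoplus_{w \in T(K)} R\Ga(K(w), \zp(n))$. First I would locate the cohomologies of these two reference complexes. Since $p$ is odd, the \'etale $p$-cohomological dimension of $\Spec \fo_{K,S}[\tfrac{1}{p}]$ equals $2$, so $A\punkt$ is concentrated in degrees $[0,2]$, and $H^0(A\punkt) = \zp(n)^{G_{K,S\cup S_p}} = 0$ for $n \geq 1$ because the $n$-th power of the cyclotomic character has no nontrivial $\zp$-invariants. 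Each $K(w)$ with $w \in T(K)$ is a finite field of characteristic $\neq p$ (using $T \cap S_p = \emptyset$), so $R\Ga(K(w),\zp(n))$ lives in degrees $[0,1]$; Frobenius acts on $\zp(n)$ by $|K(w)|^n > 1$, forcing $H^0(K(w),\zp(n)) = 0$ and $H^1(K(w),\zp(n)) = \zp/(|K(w)|^n-1)$, a finite group.

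These facts immediately give the claimed acyclicity outside degrees $1$ and $2$ and reduce the lemma to the exact sequence
$$0 \to H^1(C_S^T(\zp(n))) \to H^1(A\punkt) \to H^1(B\punkt) \to H^2(C_S^T(\zp(n))) \to H^2(A\punkt) \to 0.$$
Finiteness of $H^2(C_S^T(\zp(n)))$ is then immediate from finiteness of $H^1(B\punkt)$ (just computed) together with the classical finiteness of $H^2(A\punkt) = H^2_{\et}(\fo_{K,S}[\tfrac{1}{p}], \zp(n))$ for $n \geq 2$, a standard consequence of $p$-adic arithmetic duality (equivalently, of Quillen--Lichtenbaum).

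The only genuine issue is $\zp$-torsion-freeness of $H^1(C_S^T(\zp(n)))$, equivalently injectivity of the map $H^1(A\punkt)\tor \hookrightarrow H^1(B\punkt)$. I would extract this from the Bockstein for $0 \to \zp(n) \to \qp(n) \to \qp/\zp(n) \to 0$: since $H^0(A\punkt, \qp(n)) = 0$, the torsion of $H^1(A\punkt)$ is canonically identified with $H^0(\fo_{K,S}[\tfrac{1}{p}], \qp/\zp(n))$, and the same Bockstein embeds $H^0(K(w), \qp/\zp(n))$ into $H^1(K(w), \zp(n))$. By naturality, the required injectivity reduces, for any single $w \in T(K)$, to injectivity of the restriction
$$H^0(\fo_{K,S}[\tfrac{1}{p}], \qp/\zp(n)) \longrightarrow H^0(K(w), \qp/\zp(n)),$$
which is tautological: both sides consist of Galois-fixed points of the same module $\qp/\zp(n)$, the right-hand side with respect to the smaller (Frobenius-generated) group, so fixed points of the larger group embed into those of the smaller. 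Since $T \neq \emptyset$ by assumption, this yields the claim. The main (if brief) conceptual point is precisely this last injectivity; the remainder is bookkeeping from standard cohomological dimensions.
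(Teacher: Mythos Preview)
Your proof is correct and follows essentially the same route as the paper: the same long exact sequence, the same finiteness inputs, and the same key observation that $H^1_{\et}(\fo_{K,S}[\tfrac{1}{p}],\zp(n))\tor \simeq (\qp/\zp(n))^{G_K}$ injects into $(\qp/\zp(n))^{G_{K,w}}$ for any $w\in T(K)$. The only cosmetic difference is that you make the Bockstein identification explicit, whereas the paper simply states the isomorphisms $H^1_{\et}(\fo_{K,S}[\tfrac{1}{p}],\zp(n))\tor \simeq (\qp/\zp(n))^{G_K}$ and $H^1_{\et}(K(w),\zp(n)) \simeq (\qp/\zp(n))^{G_{K,w}}$ and concludes from non-emptiness of $T$.
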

    \begin{proof}
    As $p$ is odd, the complex $R\Ga(\fo_{K,S}[\frac{1}{p}], \zp(n))$ is acyclic outside degrees $1$ and $2$. For any $w \in T(K)$, the cohomology
    of $R\Ga(K(w), \zp(n))$ is concentrated in degree $1$. Hence the associated cohomology sequence is
    \beq \label{coh-sequence}
        0 \to H^1(C_S^T(K/k, \zp(n))) \to H^1_{\et}(\fo_{K,S}[\frac{1}{p}], \zp(n)) \to
    \eeq
    $$\bigoplus_{w \in T(K)} H^1_{\et}(K(w), \zp(n)) \to H^2(C_S^T(K/k, \zp(n))) \to H^2_{\et}(\fo_{K,S}[\frac{1}{p}], \zp(n)) \to 0.$$
    We see that $C_S^T(\zp(n))$ is acyclic outside degrees $1$ and $2$ and that $H^2(C_S^T(\zp(n)))$ is finite, as
    $H^2_{\et}(\fo_{K,S}[\frac{1}{p}], \zp(n))$ and $H^1_{\et}(K(w), \zp(n))$, $w \in T(K)$ are. Moreover there are isomorphisms
    $$H^1_{\et}(\fo_{K,S}[\frac{1}{p}], \zp(n))\tor \simeq (\qp / \zp(n))^{G_K},~H^1_{\et}(K(w), \zp(n)) \simeq (\qp / \zp(n))^{G_{K,w}},$$
    where $G_K$ and $G_{K,w}$ denote the absolute Galois group of $K$ and the absolute decomposition group at $w$, respectively; note that the inertia subgroup
    acts trivially on $\qp / \zp(n)$ for all $w \in T(K)$. Since $T$ is not empty, the natural map
    $$(\qp / \zp(n))^{G_K} \to \bigoplus_{w \in T(K)} (\qp / \zp(n))^{G_{K,w}}$$
    is injective, i.e.~$H^1_{\et}(\fo_{K,S}[\frac{1}{p}], \zp(n))\tor$ injects into $\bigoplus_{w \in T(K)} H^1_{\et}(K(w), \zp(n))$.
    This shows that $H^1(C_S^T(\zp(n)))$ is in fact torsion-free.
    \end{proof}

    The relation to the classical Coates-Sinnott conjecture as formulated in \cite{Coates-Sinnott} is the following.
    \begin{prop}
    Assume that $SCS(K/k,S,T,p,n)$ holds. Then
    $$\mc H_p(G) \cdot \theta_S^T(1-n) \subset \Ann_{\zpg}(H^2_{\et}(\fo_{K,S}[\frac{1}{p}], \zp(n))).$$
    In particular, if $G$ is abelian and $SCS(K/k,S,T,p,n)$ holds for all admissible sets $T$, then
    $$\Ann_{\zpg}(H^1_{\et}(\fo_{K,S}[\frac{1}{p}], \zp(n))\tor) \cdot \theta_S(1-n) \subset \Ann_{\zpg}(H^2_{\et}(\fo_{K,S}[\frac{1}{p}], \zp(n))).$$
    In fact, it suffices to consider sets $T$ which only consist of one place.
    \end{prop}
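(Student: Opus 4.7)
The plan is to deduce the general statement directly from Theorem~\ref{annihilation-theo} and then to reduce the abelian claim to a Tate-type Chebotarev argument analogous to \cite{Tate-Stark}, Lemma~1.1. Assuming $SCS(K/k,S,T,p,n)$, we have $\theta_S^T(1-n) \in \Fitt_{\zp G}^{\max}(H^2(C_S^T(\zp(n))))$ by definition, and since $\fo = \zp$ is integrally closed, complete, and local, Theorem~\ref{annihilation-theo} applied with $\La = \zp G$ yields
$$\mc H_p(G) \cdot \theta_S^T(1-n) \subset \Ann_{\zp G}(H^2(C_S^T(\zp(n)))).$$
The tail of the cohomology sequence (\ref{coh-sequence}) realizes $H^2_{\et}(\fo_{K,S}[\frac{1}{p}], \zp(n))$ as a $\zp G$-quotient of $H^2(C_S^T(\zp(n)))$, so the annihilator descends and the first assertion follows.

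For the abelian case $\mc H_p(G) = \zp G$, so the first assertion already gives $\theta_S^T(1-n) \in \Ann_{\zp G}(H^2_{\et}(\fo_{K,S}[\frac{1}{p}], \zp(n)))$ for every admissible $T$. Factoring $\theta_S^T(1-n) = \de_T(1-n) \cdot \theta_S(1-n)$ and specializing to a singleton $T = \{v\}$ with $v \notin S \cup S_p$ (hence $v$ unramified in $K/k$), one computes
$$\de_T(1-n) = 1 - N(v)^n \phi_v\me \in \zp G,$$
so $(1 - N(v)^n \phi_v\me) \cdot \theta_S(1-n) \in \Ann_{\zp G}(H^2_{\et}(\fo_{K,S}[\frac{1}{p}], \zp(n)))$ whenever $T = \{v\}$ is admissible. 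Since this reduction uses only singleton $T$'s, the final clause of the proposition will follow once the abelian case is established.

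It remains to identify the $\zp G$-ideal generated by the elements $1 - N(v)^n \phi_v\me$, as $v$ ranges over singleton-admissible primes, with the annihilator $\Ann_{\zp G}((\qp/\zp(n))^{G_K})$; the latter equals $\Ann_{\zp G}(H^1_{\et}(\fo_{K,S}[\frac{1}{p}], \zp(n))\tor)$ via the identification noted in the proof of Lemma~\ref{torsion-free}. One inclusion is immediate: $\phi_v$ acts on $\qp/\zp(n)$ through the $n$-th power of the cyclotomic character, namely by multiplication by $N(v)^n$, so $1 - N(v)^n \phi_v\me$ annihilates every $G_K$-fixed element. The reverse inclusion, which I expect to be the main obstacle, is the higher-weight analog of \cite{Tate-Stark}, Lemma~1.1: the module $(\qp/\zp(n))^{G_K}$ is a finite cyclic $p$-group on which $G$ acts through a fixed finite quotient, and by the Chebotarev density theorem the Frobenius classes of admissible singleton primes $v$ sweep out this quotient, so that any $\eta \in \Ann_{\zp G}((\qp/\zp(n))^{G_K})$ can be realized as a $\zp G$-combination of the $1 - N(v)^n \phi_v\me$. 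The only delicate point is that the admissibility condition $Hyp(S,\{v\})$ (torsion-freeness of $E_S^{\{v\}}$) is itself a congruence condition cutting out a positive-density subset of each Frobenius class, so Chebotarev can still be applied within admissible $v$'s, completing the argument.
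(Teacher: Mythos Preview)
Your argument is correct and follows the same route as the paper: use the surjection from sequence~(\ref{coh-sequence}) together with Theorem~\ref{annihilation-theo} for the first claim, and for the abelian claim identify the ideal generated by the $\de_{\{v\}}(1-n) = 1 - N(v)^n \phi_v^{-1}$ with $\Ann_{\zp G}((\qp/\zp(n))^{G_K})$. The only difference is that the paper dispatches this last identification by citing a lemma of Coates \cite{Coates-annihilator}, whereas you sketch the underlying Chebotarev argument directly; these amount to the same thing.

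One small correction: the ``admissibility'' required of $T$ in Conjecture~\ref{SCS-conjecture} is only that $T$ be finite, non-empty, disjoint from $S$, and contain no $p$-adic place. The torsion-freeness condition on $E_S^T$ from $Hyp(S,T)$ is \emph{not} imposed here, so your worry about cutting down Frobenius classes by that congruence condition is unnecessary and the Chebotarev step is cleaner than you indicate.
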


    \begin{proof}
    The surjection $H^2(C_S^T(K/k, \zp(n))) \onto H^2_{\et}(\fo_{K,S}[\frac{1}{p}], \zp(n))$ of sequence (\ref{coh-sequence}) implies an inclusion
    of Fitting invariants
    $$\Fitt_{\zp G}^{\max}(H^2(C_S^T(\zp(n)))) \subset \Fitt_{\zp G}^{\max}(H^2_{\et}(\fo_{K,S}[\frac{1}{p}], \zp(n))).$$
    The first assertion now follows from Theorem \ref{annihilation-theo}. For the second assertion, we observe that, for abelian $G$,
    one has $\mc H_p(G) = \zp G$ and
    $\Ann_{\zpg}(H^1_{\et}(\fo_{K,S}[\frac{1}{p}], \zp(n))\tor)$ is generated by the elements $\de_{\{v\}}(1-n)$, where $v$ runs through the finite places
    of $k$ not above $p$, $v \not\in S\ram$ by a Lemma of Coates \cite{Coates-annihilator}.
    \end{proof}

    \begin{lem} \label{Fitt_de_T}
    The finite $G$-module $\bigoplus_{w \in T(K)} H^1_{\et}(K(w), \zp(n))$ is cohomologically trivial and $\de_T(1-n)$ is a generator of
    $\Fitt_{\zpg}(\bigoplus_{w \in T(K)} H^1_{\et}(K(w), \zp(n)))$.
    \end{lem}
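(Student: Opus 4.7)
My plan is to decompose the direct sum by the primes $v \in T$, identify each summand as an induced module, and exhibit a length-one free resolution whose reduced norm yields $\de_T(1-n)$ up to a Frobenius unit.

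First, using the $G$-orbit decomposition of $T(K)$ into orbits $G \cdot w$ indexed by $v \in T$ (with stabiliser $G_w$ at the chosen place $w$ above $v$), I would write
\begin{equation*}
\bigoplus_{w \in T(K)} H^1_{\et}(K(w), \zp(n)) \simeq \bigoplus_{v \in T} \mathrm{Ind}_{G_w}^G H^1_{\et}(K(w), \zp(n))
\end{equation*}
as $\zp G$-modules. Since $T \cap S = \emptyset$ and $S \supset S\ram$, every $v \in T$ is unramified in $K/k$, so $I_w = 1$ and $G_w$ is cyclic of order $f_w$, generated by $\phi_w$.

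Next, I would compute the $\zp[G_w]$-module structure of $H^1_{\et}(K(w), \zp(n))$. As an abelian group it is $\zp/(N(w)^n - 1)\zp$, and a Hochschild--Serre argument (conjugation of $G_{K(w)}$ by $G_{k(v)}$ is trivial since the latter is procyclic, leaving only the coefficient action through the cyclotomic character) shows that $\phi_w$ acts by multiplication by $N(v)^n$. Identifying $\zp[G_w] \simeq \zp[x]/(x^{f_w}-1)$ via $x=\phi_w$, this yields an isomorphism of $\zp[G_w]$-modules
\begin{equation*}
H^1_{\et}(K(w), \zp(n)) \simeq \zp[G_w]/(\phi_w - N(v)^n).
\end{equation*}
Since $N(v)^{n f_w} - 1 \ne 0$ in $\zp$, multiplication by $\phi_w - N(v)^n$ is injective on $\zp[G_w]$, producing a length-one free resolution and hence cohomological triviality of $H^1_{\et}(K(w), \zp(n))$ as a $G_w$-module.

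Applying the exact functor $\zp G \otimes_{\zp[G_w]}(-)$ (exact because $\zp G$ is $\zp[G_w]$-free) induces this to a length-one free resolution of $\mathrm{Ind}_{G_w}^G H^1_{\et}(K(w), \zp(n))$ over $\zp G$, proving cohomological triviality of the whole direct sum and furnishing a quadratic presentation whose Fitting invariant is generated by $\nr(\phi_w - N(v)^n)$. Using $\phi_w - N(v)^n = \phi_w \cdot (1 - N(v)^n \phi_w^{-1})$ together with the fact that $\nr(\phi_w) \in \zeta(\zp G)^{\times}$ (since $\phi_w \in G \subset (\zp G)^{\times}$), this generator is $\nr(\zp G)$-equivalent to $\nr(1 - N(v)^n \phi_w^{-1})$. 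Multiplying over $v \in T$ and comparing with the definition $\de_T(1-n, \chi) = \prod_{v \in T} \det(1 - N(v)^n \phi_w^{-1} \mid V_{\chi})$ gives
\begin{equation*}
\Fitt_{\zp G}\!\left(\bigoplus_{w \in T(K)} H^1_{\et}(K(w), \zp(n))\right) = [\de_T(1-n)]_{\nr(\zp G)},
\end{equation*}
as claimed.

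The main point requiring care is the $G_w$-action computation in the second step: establishing that $\phi_w$ acts by multiplication by $N(v)^n$ rather than trivially (as a naive conjugation argument alone might suggest, since $G_{k(v)}$ being procyclic makes conjugation on $G_{K(w)}$ trivial). The nontrivial action comes entirely from the cyclotomic twist in $\zp(n)$, and this is precisely the input that makes the Fitting invariant equal $\de_T(1-n)$ on the nose.
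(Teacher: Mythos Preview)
Your proof is correct and follows essentially the same approach as the paper: decompose by $v \in T$, identify the summand as an induced module, exhibit a length-one free $\zp G_w$-resolution of $H^1_{\et}(K(w),\zp(n))$, and read off the Fitting invariant as $\prod_{v\in T}\nr(1-N(v)^n\phi_w^{-1})$. The only cosmetic differences are that the paper invokes perfectness of $R\Gamma(K(w),\zp(n))$ to get cohomological triviality and writes the presenting element directly as $1-N(v)^n\phi_w^{-1}$, whereas you compute the $G_w$-action explicitly and use $\phi_w-N(v)^n$, correcting by the unit $\nr(\phi_w)$ at the end; your extra care in justifying the $\phi_w$-action is a welcome elaboration of a step the paper leaves implicit.
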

    \begin{proof}
    Let $v \in T$ and $w \in T(K)$ be a place above $v$. The complex $R\Ga(K(w), \zp(n))$ is $\zp G_w$-perfect and its cohomology is concentrated
    in degree $1$. Hence $H^1_{\et}(K(w), \zp(n)))$ is c.t.~as $G_w$-module and hence
    $\bigoplus_{w|v} H^1_{\et}(K(w), \zp(n)) = \ind_{G_w}^G H^1_{\et}(K(w), \zp(n))$ is c.t.~as $G$-module. More precisely, $H^1_{\et}(K(w), \zp(n))$
    is the cokernel of the injective map
    $$\zp G_w \to \zp G_w,~x \mapsto x \cdot (1 - N(v)^n \phi_w\me).$$
    Hence, its Fitting invariant is generated by $\nr(1 - N(v)^n \phi_w\me)$. Since $\de_T(1-n) = \prod_{v \in T} \nr(1 - N(v)^n \phi_w\me)$,
    we are done.
    \end{proof}

    \begin{prop} \label{basechange-prop}
    Let $U$ be a normal subgroup of $G$ and put $F := K^U$ and $\ol G = G/U$. Then we have an isomorphism
    $$\zp \ol G \otimes^L_{\zp G} C_S^T(K/k, \zp(n)) \simeq C_S^T(F/k, \zp(n))$$
    in $\mc D(\zp \ol G)$.
    In particular, $SCS(K/k,S,T,p,n)$ implies $SCS(F/k,S,T,p,n)$.
    \end{prop}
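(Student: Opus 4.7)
The plan is to establish the derived base-change isomorphism by applying $\zp \ol G \otimes^L_{\zp G}$ to each piece of the defining cone, and then to transport the inclusion $\theta_S^T(1-n) \in \Fitt^{\max}(H^2)$ by functoriality of Fitting invariants under the projection $\pi : \zp G \onto \zp \ol G$.

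First, since $S \supset S\ram(K/k) \supset S\ram(K/F)$, the morphism $\Spec \fo_{K,S}[\frac{1}{p}] \to \Spec \fo_{F,S}[\frac{1}{p}]$ is finite étale Galois with group $U$, so Galois descent for étale cohomology in the derived category of $\zp G$-modules yields
$$\zp \ol G \otimes^L_{\zp G} R\Ga(\fo_{K,S}[\tfrac{1}{p}], \zp(n)) \simeq R\Ga(\fo_{F,S}[\tfrac{1}{p}], \zp(n))$$
in $\mc D(\zp \ol G)$. For every $v \in T$, the condition $S \cap T = \emptyset$ together with $S \supset S\ram$ forces $v$ to be unramified in $K/k$, so for each $w' \in T(F)$ the residue-field extension $K(w)/F(w')$ (at any $w \in T(K)$ above $w'$) is étale Galois with group $U \cap G_w$, and the analogous local descent gives
$$\zp \ol G \otimes^L_{\zp G} \bigoplus_{w \in T(K)} R\Ga(K(w), \zp(n)) \simeq \bigoplus_{w' \in T(F)} R\Ga(F(w'), \zp(n)).$$
Since the localization maps $R\Ga(\fo_{K,S}[\frac{1}{p}], \zp(n)) \to R\Ga(K(w), \zp(n))$ are natural under change of base field, and $\zp \ol G \otimes^L_{\zp G}$ commutes with cones, combining the two isomorphisms gives the required identification of $\zp \ol G \otimes^L_{\zp G} C_S^T(K/k, \zp(n))$ with $C_S^T(F/k, \zp(n))$ in $\mc D(\zp \ol G)$.

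For the implication on Fitting invariants, Lemma \ref{torsion-free} tells us that $C_S^T(K/k, \zp(n))$ has cohomology only in degrees $1$ and $2$; hence $H^3$ vanishes and the hypercohomology spectral sequence for $\zp \ol G \otimes^L_{\zp G}$ collapses in top degree, yielding
$$H^2(C_S^T(F/k, \zp(n))) \simeq \zp \ol G \otimes_{\zp G} H^2(C_S^T(K/k, \zp(n))).$$
Any $\zp G$-presentation of the source base-changes via $\pi$ to a $\zp \ol G$-presentation of the target, so
$$\pi \bigl(\Fitt_{\zp G}^{\max}(H^2(C_S^T(K/k, \zp(n))))\bigr) \subset \Fitt_{\zp \ol G}^{\max}(H^2(C_S^T(F/k, \zp(n)))).$$
Combined with the inflation compatibility $\pi(\theta_S^T(K/k, 1-n)) = \theta_S^T(F/k, 1-n)$ of the equivariant $L$-values, this yields the implication $SCS(K/k, S, T, p, n) \Rightarrow SCS(F/k, S, T, p, n)$.

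The main obstacle lies in the first step: naive Hochschild--Serre identifies $R\Ga(\fo_{F,S}[\frac{1}{p}], \zp(n))$ with derived $U$-invariants of $R\Ga(\fo_{K,S}[\frac{1}{p}], \zp(n))$, whereas $\zp \ol G \otimes^L_{\zp G}$ computes derived $U$-coinvariants. The coincidence of these two in $\mc D(\zp \ol G)$ reflects the existence of a canonical perfect $\zp G$-representative of étale cohomology whose coinvariant-style base change is the natural one. One can handle this by reducing to compactly supported cohomology via the localization triangle --- where the projection formula makes coinvariant-style descent transparent --- together with the corresponding descent for the boundary local terms at primes in $S$.
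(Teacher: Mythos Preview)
Your proof is correct and follows essentially the same route as the paper: reduce the derived base change for $C_S^T$ to the two constituent complexes of the cone, then deduce $H^2(C_S^T(F/k,\zp(n))) \simeq H^2(C_S^T(K/k,\zp(n)))_U$ and combine functoriality of Fitting invariants with $\pi(\theta_S^T(K/k,1-n)) = \theta_S^T(F/k,1-n)$. The only minor difference is in the treatment of the local terms at $T$: the paper invokes Lemma~\ref{Fitt_de_T} (cohomological triviality forces the derived tensor product to coincide with the naive one, after which the identification is immediate), whereas you argue via Galois descent on residue fields; both are short and equivalent.

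Your closing paragraph on the invariants-versus-coinvariants issue is a fair point that the paper simply suppresses. The paper just asserts that ``the corresponding statement holds'' for $R\Gamma(\fo_{K,S}[\tfrac1p],\zp(n))$, implicitly relying on the well-known fact that this complex admits a perfect $\zp G$-representative for which the coinvariant-style base change is the natural one (as in Fukaya--Kato or Burns--Flach). Your suggested workaround through $R\Gamma_c$ and the localization triangle is a legitimate way to make this explicit, though strictly speaking it is not needed once one quotes the standard descent statement.
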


    \begin{proof}
    The first assertion follows, since the corresponding statement holds for the complexes $R\Ga(\fo_{K,S}[\frac{1}{p}], \zp(n))$ and
    $\bigoplus_{w \in T(K)} R\Ga(K(w), \zp(n))$
    (the latter follows easily from Lemma \ref{Fitt_de_T} above). In particular we have canonical isomorphisms
    $$H^1(C_S^T(K/k, \zp(n)))^U  \simeq  H^1(C_S^T(F/k, \zp(n)))$$
    \beq \label{basechange_H}
        H^2(C_S^T(K/k, \zp(n)))_U  \simeq  H^2(C_S^T(F/k, \zp(n))).
    \eeq
    Since $\theta_S^T(K/k, 1-n)$ is mapped to $\theta_S^T(F/k,1-n)$ by the canonical projection $\zp G \onto \zp \ol G$, the natural behavior of
    Fitting invariants implies that if $\theta_S^T(K/k, 1-n)$ lies in $\Fitt_{\zp G}^{\max}(H^2(C_S^T(K/k, \zp(n))))$, then
    $$\theta_S^T(F/k,1-n) \in \Fitt_{\zp \ol G}^{\max}(H^2(C_S^T(K/k, \zp(n)))_U) = \Fitt_{\zp \ol G}^{\max}(H^2(C_S^T(F/k, \zp(n))))$$
    as desired.
    \end{proof}

    \begin{cor}
    It suffices to prove Conjecture \ref{SCS-conjecture} under the additional assumption that $k$ is totally real and $K$ is totally imaginary.
    \end{cor}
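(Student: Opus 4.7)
My plan is to perform two reductions built on Proposition~\ref{basechange-prop}, followed by a descent in the auxiliary set $S$.

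\textbf{Step 1: reduction to $k$ totally real.} If $k$ admits a complex archimedean place then for each irreducible character $\chi$ of $G$ and each integer $n\ge 2$ the Gamma factor $\Ga_{\C}(s)^{\chi(1)}$ attached to every such place has a pole of order $\chi(1)\ge 1$ at $s=1-n$, while the completed Artin $L$-function $\Lambda(s,\chi)$ is holomorphic at $s=1-n\ne 0,1$. Hence $L_S(1-n,\chi)=0$ for all $\chi$, so $\theta_S^T(1-n)=0$ and the claimed inclusion is trivial (since $0$ lies in every Fitting invariant). We may thus assume $k$ totally real.

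\textbf{Step 2: enlarging $K$ to a CM extension.} Pick an odd prime $\ell\ne p$ such that no place of $k$ above $\ell$ lies in $T$ (possible since $T$ is finite), and set $K':=K(\zeta_\ell)$. Then $K'$ is totally imaginary, $K'/k$ is Galois (as the compositum of two Galois extensions of $k$), and $K=(K')^U$ with $U:=\Gal(K'/K)$. Letting $S':=S\cup S_{\ram}(K'/k)$, the added primes lie above $\ell$, so $S'\cap T=\emptyset$ and $Hyp(S',T)$ persists. Granting Conjecture~\ref{SCS-conjecture} in the CM-case yields $SCS(K'/k,S',T,p,n)$, and Proposition~\ref{basechange-prop} applied to $U$ then produces $SCS(K/k,S',T,p,n)$.

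\textbf{Step 3: descent from $S'$ to $S$.} I finally induct on $|S'\setminus S|$, treating the addition of a single place $v$; by construction $v$ is unramified in $K/k$, lies outside $T$, and is not above $p$. \'Etale localization together with purity produces a distinguished triangle
\[
    \bigoplus_{w\mid v} R\Ga(K(w),\zp(n-1))[-2] \to C_S^T(\zp(n)) \to C_{S\cup\{v\}}^T(\zp(n)) \to
\]
in $\mc D(\zpg)$, whose cohomology sequence collapses---in view of Lemma~\ref{torsion-free}---to a short exact sequence
\[
    0 \to H^2(C_S^T(\zp(n))) \to H^2(C_{S\cup\{v\}}^T(\zp(n))) \to \bigoplus_{w\mid v} H^1(K(w),\zp(n-1)) \to 0
\]
of $\zpg$-modules. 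The quotient is cohomologically trivial, and its Fitting invariant matches, up to the involution $\sharp$, the Euler factor relating $\theta_{S\cup\{v\}}^T(1-n)$ to $\theta_S^T(1-n)$; multiplicativity of Fitting invariants then yields the equivalence of the two inclusions. The main difficulty is concentrated in Step~3: the precise compatibility between the local Fitting contribution and the $\sharp$-image of the Euler factor, including the twist shift $n\mapsto n-1$ forced by purity, has to be checked carefully in the non-abelian setting, though the formalism of Section~1 should suffice.
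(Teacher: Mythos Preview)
Your Steps~1 and~2 are correct and match the paper's (very terse) argument. The paper's own proof is just two sentences: Step~1 is identical to yours, and for Step~2 the paper simply invokes Proposition~\ref{basechange-prop} without further comment.

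Your Step~3, however, is an unnecessary complication created by the choice $\ell\ne p$ in Step~2. A cleaner route is to adjoin $p$-power roots of unity instead: first absorb the $p$-adic places into $S$ (this is precisely the elementary argument of Lemma~\ref{reduction-S}: the complex $C_S^T(\zp(n))$ is unchanged because it is built from $\fo_{K,S}[1/p]$, while $\theta_S^T(1-n)$ changes only by an element of $\nr(\zpg\mal)$), and then take $K'=K(\zeta_p)$. This $K'$ is totally imaginary, Galois over $k$, and ramified only at $S_{\ram}(K/k)\cup S_p\subset S$, so Proposition~\ref{basechange-prop} applies with the \emph{same} $S$ and no descent in $S$ is needed at all.

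Your Step~3 itself, while a natural idea, has a real difficulty in the ``division'' direction: from $\theta_{S\cup\{v\}}^T(1-n)\in\Fitt^{\max}(H^2(C_{S\cup\{v\}}^T))$ and $\theta_{S\cup\{v\}}^T(1-n)=\theta_S^T(1-n)\cdot\varepsilon_v$ you need $\Fitt^{\max}(H^2(C_{S\cup\{v\}}^T))\subset \varepsilon_v\cdot\Fitt^{\max}(H^2(C_S^T))$, which is not the standard inclusion for $\Fitt^{\max}$ in short exact sequences. (You are right to flag this as the main difficulty.) Also, a minor point: no $\sharp$ is needed in the comparison --- applying Lemma~\ref{Fitt_de_T} with $n$ replaced by $n-1$ gives $\Fitt(\bigoplus_{w\mid v}H^1(K(w),\zp(n-1)))=\nr(1-N(v)^{n-1}\phi_w^{-1})$, which is exactly the factor relating $\theta_{S\cup\{v\}}^T(1-n)$ to $\theta_S^T(1-n)$.
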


    \begin{proof}
    Since $\theta_S^T(1-n) = 0$ if $k$ is not totally real, Conjecture \ref{SCS-conjecture} holds trivially in this case. So we may assume
    that $k$ is totally real. Moreover, Proposition \ref{basechange-prop} implies that we may assume that $K$ is totally imaginary.
    \end{proof}

    We also provide another useful reduction step.
    \begin{lem} \label{reduction-S}
    It suffices to prove Conjecture \ref{SCS-conjecture} under the assumption $S_p \subset S$.
    \end{lem}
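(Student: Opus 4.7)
The plan is to enlarge $S$ to $S' := S \cup S_p$ and show that $SCS(K/k,S,T,p,n)$ and $SCS(K/k,S',T,p,n)$ are equivalent, so that proving the conjecture whenever $S_p \subset S$ suffices. The data $(K/k,S',T,p,n)$ satisfies the standing hypotheses of the conjecture: the assumption that no $p$-adic place lies in $T$ gives $S' \cap T = \emptyset$, and $S' \supset S \supset S_\ram \cup S_\infty$ is automatic.

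The first step is to observe that the complex $C_S^T(\zp(n))$ is literally unchanged under this enlargement. The ring $\fo_{K,S}[\frac{1}{p}]$ already has every prime above $p$ inverted, so $\fo_{K,S}[\frac{1}{p}] = \fo_{K,S'}[\frac{1}{p}]$; the second term of the cone defining $C_S^T$ does not involve $S$ at all. Hence $C_S^T(\zp(n)) = C_{S'}^T(\zp(n))$ in $\mc D(\zp G)$, and in particular $\Fitt_{\zp G}^{\max}(H^2(C_S^T(\zp(n)))) = \Fitt_{\zp G}^{\max}(H^2(C_{S'}^T(\zp(n))))$.

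The second step compares the two Stickelberger elements via the standard formula for the missing Euler factors,
$$\theta_{S'}^T(1-n) = \theta_S^T(1-n) \cdot \prod_{v \in S_p \sm S} \nr\bigl(1 - N(v)^{n-1}\phi_w^{-1}\bigr)^{\sharp},$$
where $w$ is the chosen prime of $K$ above $v$ and the reduced norm is formed via the inclusion $\zp G_w \subset \zp G$, exactly as in Lemma \ref{Fitt_de_T}. Since $v$ lies above $p$ and $n > 1$, $N(v)^{n-1}$ is divisible by $p$; hence $N(v)^{n-1}\phi_w^{-1} \in p\,\zp G_w$ and $1 - N(v)^{n-1}\phi_w^{-1}$ is a unit in $\zp G_w$, its inverse being the $p$-adically convergent geometric series $\sum_{k\geq 0}(N(v)^{n-1}\phi_w^{-1})^k$. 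The reduced norm of a unit is a unit in $\zeta(\zp G)$, and so is the full correction factor.

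Combining the two steps, $\theta_S^T(1-n)$ and $\theta_{S'}^T(1-n)$ generate the same $\zeta(\zp G)$-submodule, while the ambient Fitting invariants are literally equal, which yields the required equivalence of conjectures. There is no genuine obstacle here; the only point worth checking is the unit property of the Euler factors at $p$-adic primes, which is immediate from the assumption $n > 1$.
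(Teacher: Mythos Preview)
Your argument is correct and coincides with the paper's: both observe that the Euler factors at $p$-adic places lie in $\nr((\zp G)^\times)$ (since $p \mid N(v)^{n-1}$ for $n>1$), so the two Stickelberger elements are $\nr(\zp G)$-equivalent and the conjectures agree. You additionally spell out that $\fo_{K,S}[\frac{1}{p}] = \fo_{K,S\cup S_p}[\frac{1}{p}]$, so the complex and hence the Fitting invariant are literally unchanged---a point the paper uses implicitly; the trailing $^\sharp$ in your displayed Euler factor is superfluous (the involution from $L_S^\sharp$ is already absorbed into the $\phi_w^{-1}$) but harmless for the argument.
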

    \begin{proof}
    We have an equality
    $$\theta_{S \cup S_p}^T(1-n) = \theta_S^T(1-n) \cdot \prod_{v \in S_p, v \not\in S} \nr(1 - N(v)^{1-n} \phi_w\me),$$
    where $w$ is a place of $K$ above $v$. But the product on the righthand side lies in $\nr(\zpg\mal) = \nr(K_1(\zpg))$
    such that the claim follows by $\nr(\zpg)$-equivalence.
    \end{proof}

    We will henceforth assume that $k$ is totally real, $K$ is totally imaginary and $S_p \subset S$.
    For any $w \in S_{\infty}(K)$, the decomposition group $G_w$ is cyclic of order
    two and we denote its generator by $j_w$.
    Consider the normal subgroup
    $$U := \langle j_w \cdot j_{w'} \mid w, w' \in S_{\infty}(L) \rangle$$
    of $G$. The fixed field $K^{CM} := K^U$ is the maximal CM-subfield of $K$ and is Galois over $k$ with group $\ol G := G/U$.
    We already know by Proposition \ref{basechange-prop} that
    $SCS(K/k,S,T,p,n)$ implies $SCS(K^{CM}/k,S,T,p,n)$. We now
    show that under a mild hypothesis the converse is also true.

    \begin{lem} \label{reduction-CM}
    Assume that $U$ is a $2$-group. Then
    $$SCS(K/k,S,T,p,n) \iff SCS(K^{CM}/k,S,T,p,n).$$
    \end{lem}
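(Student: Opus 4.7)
The forward direction is Proposition~\ref{basechange-prop}, so the core task is to deduce $SCS(K/k,S,T,p,n)$ from $SCS(K^{CM}/k,S,T,p,n)$ under the hypothesis that $U$ is a $2$-group. My plan is to split $\zp G$ along the central idempotent attached to $U$ and treat the two components separately.

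First, because $p$ is odd and $U$ is a $2$-group, $|U|$ is invertible in $\zp$, and $e_U := |U|\me \sum_{u \in U} u$ is a central idempotent in $\zp G$ (as $U \triangleleft G$). The canonical projection induces a ring isomorphism $e_U \zp G \simeq \zp \ol G$, and for any $\zp G$-module $M$ the canonical maps $M^U \hookleftarrow e_U M \hookrightarrow M_U$ are bijections. Applied to $M = H^2(C_S^T(K/k, \zp(n)))$, this together with the isomorphism (\ref{basechange_H}) from the proof of Proposition~\ref{basechange-prop} yields an identification $e_U M \simeq H^2(C_S^T(K^{CM}/k, \zp(n)))$ of $\zp \ol G$-modules, while $(1-e_U)M$ is killed by the base change to $\zp \ol G$.

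Next, I would show that $\theta_S^T(K/k, 1-n) = e_U \cdot \theta_S^T(K/k, 1-n)$ and that its image in $\zp \ol G$ is $\theta_S^T(K^{CM}/k, 1-n)$. For any irreducible $\cp$-valued character $\chi$ of $G$ not inflated from $\ol G$, some element $j_w j_{w'} \in U$ acts nontrivially on $V_\chi$; I would then appeal to the archimedean part of the functional equation for $L_S(s, \chi)$ at the negative integer $s = 1-n$ (with $n \geq 2$) to conclude that $L_S(1-n, \chi) = 0$ unless each $j_w$ acts on $V_\chi$ as the scalar $(-1)^n$, in which case all such $j_w j_{w'}$ act trivially — a contradiction. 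Hence $e_\chi \theta_S^T(K/k,1-n) = 0$ for every such $\chi$, giving the first claim; the identification of images is inflation invariance of Artin $L$-functions and of the $\delta_T$-factors.

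Finally, I would decompose the Fitting invariant along $e_U$: a finite presentation of $M$ can be made block-diagonal with respect to $e_U$ and $1-e_U$, so reduced norms respect the decomposition and
\[
\Fitt^{\max}_{\zp G}(M) \;=\; \Fitt^{\max}_{e_U \zp G}(e_U M)\,\oplus\,\Fitt^{\max}_{(1-e_U)\zp G}\bigl((1-e_U)M\bigr)
\]
as subsets of $\zeta(\zp G) = \zeta(e_U \zp G) \oplus \zeta((1-e_U)\zp G)$. The $(1-e_U)$-component of $\theta_S^T(K/k,1-n)$ vanishes, hence is trivially contained in its component; by the hypothesis $SCS(K^{CM}/k,S,T,p,n)$ and the identifications of Step~1 and Step~2, the $e_U$-component lies in $\Fitt^{\max}_{e_U \zp G}(e_U M)$. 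Reassembling yields the desired membership. I expect the main delicacy to be the archimedean sign calculation in the middle step, which pins down exactly which characters support $\theta_S^T(1-n)$; once that is in hand, everything else is bookkeeping about central idempotents.
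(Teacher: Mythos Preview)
Your proposal is correct and follows essentially the same route as the paper: split along the central idempotent $e_U$, identify the $e_U$-component with the $K^{CM}$-situation via (\ref{basechange_H}), and use $\theta_S^T(K/k,1-n)=e_U\cdot\theta_S^T(K/k,1-n)$. The paper states this last equality without justification, whereas you supply the archimedean $\Gamma$-factor argument; that argument is sound (and indeed the only point requiring care).
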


    \begin{proof}
    If $U$ is a $2$-group, then the idempotent $\ve_U := |U|\me \sum_{u \in U} u$ lies in $\zpg$. Hence we have decompositions
    $$\zpg  = \ve_U \zpg \op (1 - \ve_U) \zpg,$$
    \bea
        \Fitt_{\zp G}^{\max}(H^2(C_S^T(\zp(n)))) & = & \Fitt_{\ve_U\zp G}^{\max}(\ve_UH^2(C_S^T(\zp(n)))) \op \\
        & & \Fitt_{(1-\ve_U)\zp G}^{\max}((1-\ve_U)H^2(C_S^T(\zp(n)))).
    \eea
    The first term of the latter decomposition naturally identifies with $\Fitt_{\zp \ol G}^{\max}(H^2(C_S^T(\zp(n)))_U)$
    and the result follows from (\ref{basechange_H}) once we observe that $\theta_S^T(K/k, 1-n) = \ve_U \cdot \theta_S^T(K/k, 1-n)$
    maps to $\theta_S^T(K^{CM}/k,1-n)$ under the natural identification $\ve_U \qpg \simeq \qp \ol G$.
    \end{proof}

    \begin{rem}
    Note that $U$ is a $2$-group if $G$ has a unique $2$-Sylow subgroup. This in particular applies to nilpotent groups.
    Moreover, if our primary interest was in an extension $K/k$ of totally real fields, then we may enlarge $K$ to
    a CM-field.
    \end{rem}

    We will now focus on CM-extensions $K/k$ with Galois group $G$. We denote the maximal real subfield of $K$ by $K^+$ and
    let $j \in G$ be complex conjugation. We put
    $e_n := \frac{1 + (-1)^n j}{2}$ which is a central idempotent in $G$. For any $\zpg$-module $M$ we have
    natural isomorphisms
    $$e_n \cdot M = \left\{ \barr{lll} M^+ & \mbox{ if } & n \mbox{ is even}\\ M^- & \mbox{ if } & n \mbox{ is odd}.\earr \right.$$
    Since $e_n \theta_S^T(1-n) = \theta_S^T(1-n)$, Conjecture \ref{SCS-conjecture} is true if and only if $\theta_S^T(1-n)$ belongs
    to $\Fitt_{e_n \zp G}^{\max}(e_n H^2(C_S^T(\zp(n))))$. Let
    $$C_{p,r}\punkt := R\Hom_{\zp}(R\Ga_c(\fo_{K,S}[1/p], \zp(1-n)), \zp[-2]),$$
    where $R\Ga_c(\fo_{K,S}[1/p], \zp(1-n))$ is the complex of $\zp G$-modules given by the cohomology with compact support as defined in \cite{Burns_Flach},
    p.~522.
    Then the complex $C_{p,r}\punkt$ belongs to $\mc D^{\perf}(\zpg)$ and fits into an exact triangle
    in $\mc D(\zp G)$ (cf.~\cite{Burns_Flach_invariants}, Prop.~4.1).:
    $$
        \bigoplus_{w \in S_{\infty}(K)} R\Hom_{\zp}(R\Ga_{\De}(K(w), \zp(1-n)), \zp)[-3] \lto
        R\Ga(\fo_{K,S}[1/p], \zp(n)) \lto C_{p,r}\punkt[-1] \lto
    $$
    where $R\Hom_{\zp}(R\Ga_{\De}(K(w), \zp(1-n)), \zp)$ is given by $\zp(n-1)$ (placed in degree zero) if $w$ is complex, and by
    $$\zp \stackrel{\de_1}{\lto} \zp \stackrel{\de_0}{\lto} \zp \stackrel{\de_1}{\lto} \dots$$
    if $w$ is real (which does not occur here) and $\de_i$ is multiplication with $1- (-1)^{i-n}$ for $i=0,1$; here, the first $\zp$ is placed in degree $0$.
    Hence the only non-trivial term of $R\Hom_{\zp}(R\Ga_{\De}(L(w), \zp(1-n)), \zp)$ is $\bigoplus_{w \in S_{\infty}(K)} \zp(n-1)$ which is
    annihilated by $e_n$. Hence we have an isomorphism
    $$e_n R\Ga(\fo_{K,S}[1/p], \zp(n)) := e_n \zpg \otimes^L_{\zpg} R\Ga(\fo_{K,S}[1/p], \zp(n)) \simeq e_n C_{p,r}\punkt[-1]$$
    in $\mc D^{\perf}(e_n \zpg)$. In fact
    \beq \label{en-is-tor}
        e_n  H^1_{\et}(\fo_{K,S}[1/p], \zp(n)) = H^1_{\et}(\fo_{K,S}[1/p], \zp(n))\tor
    \eeq
    such that we obtain an exact triangle
    \beq \label{BF-triangle}
        e_n C_S^T(\zp(n))  \to e_n C_{p,r}\punkt[-1]  \to e_n \bigoplus_{w \in T(K)} R\Ga(K(w), \zp(n))  \to
    \eeq
    in $\mc D^{\perf}\tor(e_n \zpg)$. The following theorem gives the relation to the ETNC
    as formulated by Burns and Flach \cite{Burns_Flach}.

    \begin{theo} \label{connection-ETNC}
    Assume that $K/k$ is a Galois CM-extension. Then $e_n H^1(C_S^T(\zp(n)))$ vanishes and $e_n H^2(C_S^T(\zp(n)))$
    is a cohomologically trivial $G$-module. Moreover, the following assertions are equivalent.
    \ben
        \item
        $\theta_S^T(1-n) \in \Fitt_{e_n \zpg}(e_n H^2(C_S^T(\zp(n))))$.
        \item
        $\theta_S^T(1-n)$ is a generator of $\Fitt_{e_n \zpg}(e_n H^2(C_S^T(\zp(n))))$.
        \item
        The $p$-part of the ETNC for the pair $(\Q(1-n)_K, e_n \Z[\half] G)$ holds.
    \een
    In particular, (1) and (2) are independent of the sets $S$ and $T$.
    \end{theo}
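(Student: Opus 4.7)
The plan is to exploit the triangle (\ref{BF-triangle}) together with the Burns--Flach formulation of the ETNC, using Lemma \ref{Fitt_de_T} to pin down the $T$-factor. I first would verify that $e_n H^1(C_S^T(\zp(n)))=0$ and that $e_n H^2(C_S^T(\zp(n)))$ is cohomologically trivial. The injection $H^1(C_S^T(\zp(n))) \hookrightarrow H^1_{\et}(\fo_{K,S}[1/p], \zp(n))$ from (\ref{coh-sequence}) has torsion-free source by Lemma \ref{torsion-free}, while its $e_n$-part of the target is pure torsion by (\ref{en-is-tor}); this forces the first vanishing. Then $e_n C_S^T(\zp(n))$ is quasi-isomorphic to $e_n H^2(C_S^T(\zp(n)))$ placed in a single degree and is perfect by (\ref{BF-triangle}), so its unique nonzero cohomology has finite projective dimension over $e_n \zpg$; being also finite, it is cohomologically trivial.

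For the equivalences, the heart of the argument is to re-express (1) and (2) in terms of the complex $e_n C_{p,r}\punkt[-1]$ via the exact triangle (\ref{BF-triangle}). In $\kot{e_n \zpg}$ one has the equality
$$[e_n C_{p,r}\punkt[-1]] \;=\; [e_n C_S^T(\zp(n))] + \left[e_n \bigoplus_{w\in T(K)} R\Ga(K(w), \zp(n))\right],$$
so the Fitting invariants multiply accordingly. Lemma \ref{Fitt_de_T} identifies the contribution of the last summand with the class generated by $\de_T(1-n)$. Using the factorization $\theta_S^T(1-n) = \de_T(1-n) \cdot L_S(1-n)^{\sharp}$, this converts the assertion that $\theta_S^T(1-n)$ generates $\Fitt_{e_n \zpg}(e_n H^2(C_S^T(\zp(n))))$ into the assertion that $L_S(1-n)^{\sharp}$ is the distinguished generator of $\Fitt_{e_n \zpg}(e_n C_{p,r}\punkt[-1])$. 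But that latter assertion is exactly the $p$-component of the ETNC for $(\Q(1-n)_K, e_n \Z[\half]G)$ in the formulation of Burns--Flach: namely, the refined Euler characteristic of $e_n C_{p,r}\punkt$, equipped with its canonical trivialization, equals $\hat\partial_G(L_S(1-n)^{\sharp})$ in $K_0(\zpg,\qpg)$. This establishes (2) $\Leftrightarrow$ (3).

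The implication (2) $\Rightarrow$ (1) is immediate. For (1) $\Rightarrow$ (2), cohomological triviality from step one gives a quadratic presentation of $e_n H^2(C_S^T(\zp(n)))$, so $\Fitt_{e_n \zpg}(e_n H^2(C_S^T(\zp(n))))$ is a principal $\nr(e_n\zpg)$-class with some generator $\Phi$; assumption (1) writes $\theta_S^T(1-n) = z \cdot \nr(U) \cdot \Phi$ for some $z \in \zeta(e_n\zpg)$ and some $U$. Sending this relation through $\hat\partial_G$ and invoking the identification of step two forces $z \in \nr(K_1(e_n\zpg))$, so $\theta_S^T(1-n)$ is itself a generator. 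The independence of (1) and (2) from $S$ and $T$ is then inherited from the intrinsic formulation of (3).

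The main obstacle is the careful matching of conventions --- degree shifts in the triangle (\ref{BF-triangle}), the involution $^{\sharp}$, and signs in the Burns--Flach refined Euler characteristic --- so as to identify precisely $L_S(1-n)^{\sharp}$ (and not a twisted variant) as the generator of $\Fitt_{e_n \zpg}(e_n C_{p,r}\punkt[-1])$. A secondary delicate point in the argument for (1) $\Rightarrow$ (2) is controlling the potential vanishing of $L_S(1-n,\chi)$ coming from Euler factors at places in $S \smallsetminus S_p$; such vanishing is absorbed by the matching $S$-dependence on both sides of the identification, but must be tracked explicitly when testing the multiplier $z$ on Wedderburn components.
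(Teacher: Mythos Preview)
Your treatment of the vanishing of $e_n H^1(C_S^T(\zp(n)))$, the cohomological triviality of $e_n H^2(C_S^T(\zp(n)))$, and the equivalence $(2)\Leftrightarrow(3)$ via the triangle (\ref{BF-triangle}) and Lemma \ref{Fitt_de_T} is essentially the paper's argument and is fine.

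The gap is in $(1)\Rightarrow(2)$. You write $\theta_S^T(1-n)=z\cdot\nr(U)\cdot\Phi$ with $\Phi$ a generator of the Fitting invariant, and then claim that ``sending this relation through $\hat\partial_G$ and invoking the identification of step two forces $z\in\nr(K_1(e_n\zpg))$''. But the identification of step two says precisely that $\hat\partial_G(\theta_S(1-n))$ equals the refined Euler characteristic of $e_n C_{p,r}\punkt$ \emph{if and only if} (3) holds; it does not give you this equality unconditionally. So applying $\hat\partial_G$ to your relation yields $\hat\partial_G(\theta_S^T(1-n))=\hat\partial_G(z)+[\,e_nH^2(C_S^T(\zp(n)))\,]$, and you have no independent control on the left-hand side without already assuming (3). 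The argument is circular.

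The paper closes this gap by genuinely arithmetic input. Using \cite{ich-Fitting}, Prop.~5.4, containment of $\theta_S^T(1-n)$ in the Fitting invariant upgrades to generation as soon as one verifies the cardinality identity
\[
\prod_{\chi}\theta_S^T(1-n)_{\chi}^{\chi(1)}\ \sim\ |e_nH^2(C_S^T(\zp(n)))|.
\]
After stripping off the $\de_T$-contribution via Lemma \ref{Fitt_de_T} and the exact sequence (\ref{coh-sequence}), this reduces to
\[
\prod_{\chi}\theta_S(1-n)_{\chi}^{\chi(1)}\ \sim\ \frac{|e_nH^2_{\et}(\fo_{K,S}[1/p],\zp(n))|}{|e_nH^1_{\et}(\fo_{K,S}[1/p],\zp(n))|},
\]
which for even $n$ is the cohomological Lichtenbaum conjecture for $K^+$ (a theorem of Wiles) and for odd $n$ is Kolster's higher relative class number formula. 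In other words, $(1)\Rightarrow(2)$ is not a formal step: it rests on the (non-equivariant) main conjecture for totally real fields. Your proposal does not invoke anything of that strength, and no purely $K$-theoretic manipulation can substitute for it.
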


    \begin{cor} \label{ETNC-cor}
    Assume that $K/k$ is a Galois extension of number fields with $k$ totally real and let $p$ be an odd prime.
    If there exists a totally imaginary field $\ti K$
    containing $K$ such that $\ti K /k$ is Galois and $\ti K / \ti K^{CM}$ is a $2$-extension, then $SCS(K/k,S,T,p,n)$
    holds for all admissible sets $S$ and $T$ provided that Iwasawa's $\mu$-invariant attached to the cyclotomic $\zp$-extension
    of $\ti K^{CM}(\zeta_p)$ vanishes.
    \end{cor}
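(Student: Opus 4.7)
The plan is simply to chain together the reduction steps already established in this section. First, apply Proposition \ref{basechange-prop} to the normal subgroup $\Gal(\tilde K / K) \trianglelefteq \Gal(\tilde K/k)$: since $SCS$ descends from an extension to its subextensions, it suffices to prove $SCS(\tilde K/k, S, T, p, n)$. Second, by hypothesis $\tilde K / \tilde K^{CM}$ is a $2$-extension, so the subgroup $U := \Gal(\tilde K / \tilde K^{CM})$ of $\Gal(\tilde K / k)$ is a $2$-group; hence Lemma \ref{reduction-CM} yields the equivalence
\[
  SCS(\tilde K/k, S, T, p, n) \iff SCS(\tilde K^{CM}/k, S, T, p, n).
\]
Third, Lemma \ref{reduction-S} allows us to further assume $S_p \subset S$ when proving the right-hand side.

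At this point we have reduced to a situation in which $k$ is totally real and $\tilde K^{CM}/k$ is a CM-extension with $S \supset S_{\ram} \cup S_\infty \cup S_p$, which is precisely the setting of Theorem \ref{connection-ETNC}. That theorem identifies $SCS(\tilde K^{CM}/k, S, T, p, n)$ with the $p$-part of the ETNC for the pair $(\Q(1-n)_{\tilde K^{CM}}, e_n \Z[\tfrac{1}{2}] \Gal(\tilde K^{CM}/k))$. This special case of the ETNC has been proven by Burns in \cite{Burns-mc} under the assumption that Iwasawa's $\mu$-invariant attached to the cyclotomic $\zp$-extension of $\tilde K^{CM}(\zeta_p)$ vanishes, which is exactly the hypothesis of the corollary.

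Since each step is an implication or an equivalence already in hand, there is no genuinely hard step to execute; the only thing worth checking carefully is that the vanishing of $\mu$ in the hypothesis is the one actually needed to invoke Burns' theorem on the ETNC (it is, because the reductions only ever enlarge the field to $\tilde K$ and then pass down to $\tilde K^{CM}$, and Burns' statement concerns precisely the cyclotomic $\zp$-extension of $\tilde K^{CM}(\zeta_p)$). Alternatively, one could bypass the appeal to \cite{Burns-mc} by combining Theorem \ref{EIMC-GP} with the computation of $\Fitt_{\zp G_-}^{\max}$ in the style of Theorem \ref{strongBS}, thereby giving a direct proof of Conjecture \ref{SCS-conjecture} for $\tilde K^{CM}/k$ from the reformulated EIMC; this is the alternative route flagged in the introduction.
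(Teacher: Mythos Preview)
Your proposal is correct and follows essentially the same route as the paper's proof, which reads in full: ``By Proposition \ref{basechange-prop} and Lemma \ref{reduction-CM} we are reduced to the case $K = \tilde K^{CM}$, i.e.~$K$ is actually a CM-field. Now \cite{Burns-mc}, Cor.~2.10 shows that the relevant part of the ETNC holds if $\mu = 0$.'' You have simply spelled out the chain of reductions more explicitly (including Lemma \ref{reduction-S} and the explicit invocation of Theorem \ref{connection-ETNC}, both of which are implicit in the paper's terse argument), and your remark about the alternative route via Theorem \ref{EIMC-GP} matches the alternative proof the paper sketches at the end of the section.
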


    \begin{proof}
    By Proposition \ref{basechange-prop} and Lemma \ref{reduction-CM} we are reduced to the case $K = \ti K^{CM}$, i.e.~$K$
    is actually a CM-field. Now \cite{Burns-mc}, Cor.~2.10 shows that the relevant part of the ETNC holds if $\mu =0$.
    \end{proof}

    \begin{rem}
    We will sketch a second proof of Corollary \ref{ETNC-cor} below, using our results of section 2 and 3.
    \end{rem}

    The following is a non-abelian analogue of \cite{EIMC-reform}, Th.~6.11 and also reproves \cite{ich-negative}, Cor.~4.2.
    \begin{cor}
    Assume that $K/k$ is a Galois CM-extension and let $p$ be an odd prime. If Iwasawa's $\mu$-invariant attached to the cyclotomic $\zp$-extension of $K(\zeta_p)$ vanishes, then
    $$\Fitt_{\zp G}^{\max}(H^1_{\et}(\fo_{K,S}[1/p], \zp(n))\tor^{\vee})^{\sharp} \cdot \theta_S(1-n) = e_n \Fitt_{\zp G}^{\max}(H^2_{\et}(\fo_{K,S}[1/p], \zp(n))).$$
    \end{cor}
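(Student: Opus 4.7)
The plan is to mirror the proof of Theorem \ref{strongBS} at higher weight $n$. First, note that since $K$ is already a CM-field, Corollary \ref{ETNC-cor} applies with $\ti K = K$, so $SCS(K/k, S, T, p, n)$ holds for every admissible pair $(S,T)$ under the vanishing of $\mu$. Combined with Theorem \ref{connection-ETNC}, this upgrades the containment to the statement that $\theta_S^T(1-n)$ is a \emph{generator} of the principal Fitting invariant $\Fitt_{e_n \zpg}(e_n H^2(C_S^T(\zp(n))))$, and $e_n H^1(C_S^T(\zp(n))) = 0$.

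Next, fix any finite set $T$ of places of $k$ disjoint from $S$ such that $Hyp(S,T)$ holds. Apply $e_n$ to the cohomology sequence (\ref{coh-sequence}) and use the above vanishing together with identity (\ref{en-is-tor}) to obtain a four-term exact sequence of finite cohomologically trivial $\zpg$-modules
\begin{multline*}
0 \to H^1_{\et}(\fo_{K,S}[1/p], \zp(n))\tor \to e_n \bigoplus_{w \in T(K)} H^1_{\et}(K(w), \zp(n)) \\
\to e_n H^2(C_S^T(\zp(n))) \to e_n H^2_{\et}(\fo_{K,S}[1/p], \zp(n)) \to 0.
\end{multline*}
Splitting this in the middle and applying multiplicativity of the principal Fitting invariant on short exact sequences of cohomologically trivial modules, combined with Lemma \ref{Fitt_de_T} (which gives $\langle \de_T(1-n) \rangle$ as the Fitting invariant of the second term), produces
\begin{equation*}
\langle \theta_S^T(1-n) \rangle \cdot \Fitt_{\zpg}(H^1_{\et}(\fo_{K,S}[1/p], \zp(n))\tor) = \langle e_n \de_T(1-n) \rangle \cdot \Fitt_{\zpg}(e_n H^2_{\et}(\fo_{K,S}[1/p], \zp(n))).
\end{equation*}
Substituting $\theta_S^T(1-n) = \de_T(1-n)\theta_S(1-n)$ and cancelling the nonzerodivisor factor $e_n\de_T(1-n)$ via $\nr(\zpg)$-equivalence eliminates the dependence on $T$ and yields the analogous identity with $\theta_S^T$ replaced by $\theta_S$ and no $\de_T$-factor remaining.

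To finish, I would invoke the Pontryagin duality identity $\Fitt_{\zpg}(M) = \Fitt_{\zpg}(M^{\vee})^{\sharp}$ for any finite cohomologically trivial $\zpg$-module $M$, i.e.~the Fitting-invariant counterpart of the annihilator identity $\Ann(M) = \Ann(M^\vee)^{\sharp}$ already used in the proof of Theorem \ref{strongBS}; this follows from \cite{ich-Fitting}, Prop.~6.3 by direct manipulation of a quadratic presentation. Applied to $H^1_{\et}(\fo_{K,S}[1/p], \zp(n))\tor$, it converts the previous equation into the stated formula, since all modules involved admit quadratic presentations after $e_n$-projection so that $\Fitt = \Fitt^{\max}$ throughout. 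The principal obstacle will be the verification of cohomological triviality of the two outer terms of the four-term sequence after $e_n$-projection (needed both for multiplicativity and for $\Fitt = \Fitt^{\max}$); this ultimately rests on the perfectness of $e_n R\Ga(\fo_{K,S}[1/p], \zp(n))$ and its concentration in a single cohomological degree, as encoded by (\ref{en-is-tor}) and the triangle (\ref{BF-triangle}), together with the analogous analysis for the local terms already performed in Lemma \ref{Fitt_de_T}.
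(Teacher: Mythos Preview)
Your overall strategy --- invoke the ETNC via Corollary \ref{ETNC-cor} and Theorem \ref{connection-ETNC}, then analyse the $e_n$-part of the cohomology sequence (\ref{coh-sequence}) as a four-term exact sequence --- coincides with the paper's. The paper's proof is essentially two lines: the two \emph{middle} terms of the four-term sequence are c.t.~with Fitting generators $\de_T(1-n)$ and $\theta_S^T(1-n)$ respectively, and then one applies \cite{ich-Fitting}, Prop.~5.3~(ii) directly to this four-term sequence.

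The gap in your version is precisely the ``principal obstacle'' you flag at the end, and your proposed resolution of it is incorrect. The complex $e_n R\Ga(\fo_{K,S}[1/p], \zp(n))$ is \emph{not} concentrated in a single cohomological degree: equation (\ref{en-is-tor}) says that $e_n H^1_{\et} = H^1_{\et}\tor$, which is finite but typically nonzero, while $e_n H^2_{\et}$ is also nonzero in general. Perfectness of the complex therefore does not force either of the outer terms $H^1_{\et}\tor$ or $e_n H^2_{\et}$ to be c.t.~individually --- and indeed they are not in general (for instance $H^1_{\et}\tor \simeq (\qp/\zp(n))^{G_K}$ can have trivial $G$-action when $p \mid |G|$). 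Consequently your splitting-and-multiplicativity step, your use of $\Fitt = \Fitt^{\max}$ on the outer terms, and your invocation of the duality $\Fitt(M) = \Fitt(M^{\vee})^{\sharp}$ (which requires a quadratic presentation of $M$) all fail simultaneously.

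The reason the paper's argument goes through is that Prop.~5.3~(ii) of \cite{ich-Fitting} is designed exactly for this situation: it takes a four-term exact sequence $A \into B \to C \onto D$ of finite modules in which only the \emph{middle} terms $B$ and $C$ are assumed c.t., and outputs an identity relating $\Fitt^{\max}(A^{\vee})^{\sharp}$ and $\Fitt^{\max}(D)$ via the generators of $\Fitt(B)$ and $\Fitt(C)$. This is why the dual $H^1_{\et}\tor^{\vee}$ and the $\sharp$ appear in the statement of the corollary: they are produced by that proposition, not obtained afterwards by a separate duality step.
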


    \begin{proof}
    We will make use of the validity of the above special case of the ETNC under the assumption $\mu=0$.
    The $e_n$-part of the exact sequence (\ref{coh-sequence}) is a four term sequence of finite $e_n \zpg$-modules. The two middle terms are c.t.~with
    generator $\de_T(1-n)$ and $\theta_S^T(1-n)$ by Lemma \ref{Fitt_de_T} and Theorem \ref{connection-ETNC}, respectively.
    The result now follows by applying \cite{ich-Fitting}, Prop.~5.3 (ii) to this four term sequence.
    \end{proof}

    \begin{rem}
    Since $H^1_{\et}(\fo_{K,S}[1/p], \zp(n))\tor$ is a cyclic module, we always have an inclusion
    $$\nr(\Ann_{\zpg} (H^1_{\et}(\fo_{K,S}[1/p], \zp(n))\tor)) \subset \Fitt_{\zp G}^{\max}(H^1_{\et}(\fo_{K,S}[1/p], \zp(n))\tor^{\vee})^{\sharp}$$
    with equality if $G$ is abelian (and one can drop $\nr$ as it is just the identity on $\zpg$ for abelian $G$). For arbitrary $G$, equality seems to be likely, but is not clear.
    \end{rem}

    \begin{proof}[Proof of Theorem \ref{connection-ETNC}]
    Since $e_n H^1(C_S^T(\zp(n)))$ is torsion-free by Lemma \ref{torsion-free} and is a submodule of
    $e_n H^1_{\et}(\fo_{K,S}[1/p], \zp(n))$ which is the torsion submodule of
    $H^1_{\et}(\fo_{K,S}[1/p], \zp(n))$ by (\ref{en-is-tor}), it must by trivial. The triangle (\ref{BF-triangle}) shows
    that $C_S^T(\zp(n)) e_n$ is a perfect complex. But the only non-trivial cohomology group is $e_n H^2(C_S^T(\zp(n)))$
    which is thus of finite projective dimension, hence c.t.~as $G$-module. In particular, it admits a quadratic presentation
    and, using the exact triangle (\ref{BF-triangle}) and Lemma \ref{Fitt_de_T},
    \bea
        \Fitt_{e_n \zpg}(e_n H^2(C_S^T(\zp(n)))) & = & \Fitt_{e_n \zpg}(e_n C_S^T(\zp(n)) )\me\\
        & = & \Fitt_{e_n \zpg}(e_n \bigoplus_{w \in T(K)} R\Ga(K(w), \zp(n))) \\
        & & \cdot \Fitt_{e_n \zpg}(e_n C_{p,r}\punkt[-1])\me\\
        & = & \de_T(1-n) \cdot \Fitt_{e_n \zpg}(e_n C_{p,r}\punkt).
    \eea
    But $\Fitt_{e_n \zpg}(e_n C_{p,r}\punkt)$ is generated by $\theta_S(1-n)$ if and only if the refined Euler characteristic
    $\chi_{e_n \zpg, e_n \qpg}(e_n C_{p,r}\punkt,0)$ equals $\hat\partial_G(\theta_S(1-n))$, i.e.~if and only if the ETNC for
    the pair $(\Q(1-n)_K, e_n \Z[\half] G)$ holds; this reformulation of the ETNC is due to Burns \cite{Burns-motivic}, Prop.~4.2.6,
    but see \cite{ich-negative}, Prop.~2.15 which applies more directly. This shows the equivalence of (2) and (3). Since clearly
    (2) implies (1), we are left with the proof of $(1) \implies (2)$. For this, let $E$ be a splitting field of $\qp G$; then
    $$E \otimes \zeta(\qp G) = \zeta(EG) = \bigoplus_{\chi \in \irrp(G)} E e_{\chi}$$
    and we may write $1 \otimes \theta_S^T(1-n) = \sum_{\chi} \theta_S^T(1-n)_{\chi} e_{\chi}$. By \cite{ich-Fitting}, Prop.~5.4 it suffices to show
    that
    $$\prod_{\chi} \theta_S^T(1-n)_{\chi}^{\chi(1)} \sim |e_n H^2(C_S^T(\zp(n)))|,$$
    where the product runs through all irreducible odd (resp.~even) characters of $G$ if $n$ is odd (resp.~even) and $\sim$ means ``equal up to a
    $p$-adic unit''. By the same proposition
    and Lemma \ref{Fitt_de_T}, we have
    $$\prod_{\chi} \de_T(1-n,\chi)^{\chi(1)} \sim |e_n \bigoplus_{w \in T(K)} H^1_{\et}(K(w), \zp(n))|.$$
    Using the fact that $e_n H^1(C_S^T(\zp(n)))$ vanishes and sequence (\ref{coh-sequence}), we are left to show (in obvious notation)
    \beq \label{lefttoshow}
        \prod_{\chi} \theta_S(1-n)_{\chi}^{\chi(1)} \sim \frac{|e_n H^2_{\et}(\fo_{K,S}[1/p], \zp(n))|}{|e_n H^1_{\et}(\fo_{K,S}[1/p], \zp(n))|}.
    \eeq
    If $n$ is even, the left hand side equals $\zeta_{K^+}(1-n)$, where $\zeta_{K^+}$ denotes the Dedekind zeta function of the number field $K^+$.
    Moreover,
    $$\frac{|e_n H^2_{\et}(\fo_{K,S}[1/p], \zp(n))|}{|e_n H^1_{\et}(\fo_{K,S}[1/p], \zp(n))|} =
    \frac{|H^2_{\et}(\fo_{K^+,S}[1/p], \zp(n))|}{|H^1_{\et}(\fo_{K^+,S}[1/p], \zp(n))\tor|}$$
    such that (\ref{lefttoshow}) is equivalent to the cohomological version of Lichtenbaum's conjecture which is a theorem due to Wiles \cite{Wiles-main}
    in this case.
    If $n$ is odd, a similar argument shows that (\ref{lefttoshow}) is equivalent to a higher relative class number formula as formulated and proved
    by Kolster \cite{Kolster_relativecnf}, Prop.~1.1.
    \end{proof}

    Finally, we briefly illustrate how to use the results of section 2 and 3 to give an alternative proof of Corollary \ref{ETNC-cor}.
    Since this will not lead to a new result, some of the details are left to the reader. As before,
    we may reduce the problem to the case, where $K/k$ is a CM-extension.
    In fact by Theorem \ref{connection-ETNC}, the following provides a new proof of the $p$-part of the ETNC for the pair $(\Q(1-n)_K, e_n \Z[\half] G)$
    if $\mu=0$.\\

    By Lemma \ref{reduction-S}, we may assume that $S$ contains
    the $p$-adic places, and by Proposition \ref{basechange-prop} we may assume that $\zeta_p \in K$. Let $\mc K$ be the cyclotomic
    $\zp$-extension of $K$ and denote by $\mc S$ and $\mc T$ the places of $\mc K$ above the places in $S$ and $T$, respectively.
    As before, let $\mc G = \Gal(\mc K/k)$ and $\Ga = \Gal(\mc K / K)$.
    The exact sequence (\ref{GP-sequence}) tensored with $\zp(n-1)$ leads to an exact sequence of $\La(\mc G)$-modules
    $$\zp(n) \into T_p(\De_{\mc K, \mc T})^-(n-1) \to T_p(\mc M_{\mc S, \mc T}^{\mc K})^-(n-1) \onto X_S^+(-n)^{\ast}.$$
    A spectral sequence argument leads to natural isomorphisms of $\zpg$-modules (cf.~\cite{EIMC-reform}, Prop.~6.17; the
    assumption on $G$ to be abelian is not necessary)
    $$e_n H^2_{\et}(\fo_{K,S}[1/p], \zp(n)) \simeq X_S^+(-n)^{\ast}_{\Ga},~e_n H^1_{\et}(\fo_{K,S}[1/p], \zp(n))\tor \simeq \zp(n)_{\Ga}.$$
    Since also
    $$T_p(\De_{\mc K, \mc T})^-(n-1)_{\Ga} \simeq e_n \bigoplus_{w \in T(K)} H^1_{\et}(K(w), \zp(n)),$$
    taking $\Ga$-coinvariants  yields an exact sequence
    $$H^1_{\et}(\fo_{K,S}[1/p], \zp(n))\tor \into e_n \bigoplus_{w \in T(K)} H^1_{\et}(K(w), \zp(n)) \to$$
    $$T_p(\mc M_{\mc S, \mc T}^{\mc K})^-(n-1)_{\Ga} \onto e_n H^2_{\et}(\fo_{K,S}[1/p], \zp(n))$$
    which is rather similar to sequence (\ref{coh-sequence}) times $e_n$. In fact, if the extension class of $\al(\_)(1)$ applied to
    sequence (\ref{GP-sequence}) matches the extension class of the Ritter-Weiss sequence (\ref{canonical_complex}), i.e.~if
    the complex which consists of the two middle terms is isomorphic to $R\Hom(R\Ga_{\et}(\Spec(\fo_{\mc K}[\frac{1}{S}]), \qp / \zp), \qp / \zp)$ in $\mc D(\La(\mc G))$,
    then $T_p(\mc M_{\mc S, \mc T}^{\mc K})^-(n-1)_{\Ga}$ naturally identifies with $e_n H^2(C_S^T(\zp(n)))$. If not, one can construct
    a four term exact sequence
    $$\zp(n) \into T_p(\De_{\mc K, \mc T})^-(n-1) \to Y_S^T(n-1) \onto X_S^+(-n)^{\ast}$$
    which has the correct extension class, and a proof similar to that of Theorem \ref{EIMC-GP} shows that
    $\Psi_{S,T}$ is a generator of $\Fitt_{\La(\mc G)_-}(Y_S^T)$.
    In any case, we have an equality of Fitting invariants
    $$\Fitt_{e_n \zpg}(T_p(\mc M_{\mc S, \mc T}^{\mc K})^-(n-1)_{\Ga}) = \Fitt_{e_n \zpg}(e_n H^2(C_S^T(\zp(n)))).$$
    But Theorem \ref{EIMC-GP} implies that the left hand side is generated by $t_{n-1}(\Psi_S^T)(0) = \theta_S^T(1-n)$ as desired;
    here, for $m \in \Z$ we denote by $t_{m}$ the continuous $\zp$-algebra endomorphism of $\La(\mc G)$ induced by
    $t_m(g) = \ka(g)^m \cdot g$ and we have used the following fact.
    Let $M$ be a finitely generated torsion $\La(\mc G)$-module of projective dimension at most $1$
    which has no non-trivial finite submodule.
    Then $t_m(\Psi)$ is a generator of
    the Fitting invariant of $M(m)$ if $\Psi$ is a generator of the Fitting invariant of $M$;
    this follows from the proof of Proposition \ref{Fitt-alpha}.

\noindent Andreas Nickel~~ anickel3@math.uni-bielefeld.de\\
Universit\"{a}t Bielefeld,
    Fakult\"{a}t f\"{u}r Mathematik,
    Postfach 100131,
    33501 Bielefeld,
    Germany

\end{document}